\newtheorem{thm}{Theorem}
\newtheorem{cor}[thm]{Corollary}
\newtheorem*{jthm}{Theorem}
\newtheorem{lem}[thm]{Lemma}
\newtheorem{prop}[thm]{Proposition}
\newtheorem{prob}[thm]{Problem}
\theoremstyle{definition}
\newtheorem{defn}[thm]{Definition}
\newcommand{\R}{\mathbb{R}}
\newcommand{\C}{\mathbb{C}}
\DeclareMathOperator{\dist}{dist}
\DeclareMathOperator{\Hopf}{Hopf}
\begin{document}

\markright{\today}

\title[Meromorphic quadratic differentials and measured foliations]{Meromorphic quadratic differentials and measured foliations on a Riemann surface}

\author{Subhojoy Gupta}
\author{Michael Wolf}

\address{Department of Mathematics, Indian Institute of Science, Bangalore 560012, India.} 

\email{subhojoy@math.iisc.ernet.in}

\address{Department of Mathematics, Rice University, Houston, Texas, 77005-1892, USA.}
\email{mwolf@rice.edu}

\date{Compiled \today\ }
\maketitle

\begin{abstract}
We describe the space of measured foliations induced on a compact Riemann surface by meromorphic quadratic differentials. We prove that any such foliation is realized by a unique such differential $q$ if we prescribe, in addition, the principal parts of $\sqrt q$ at the poles. This generalizes a theorem of Hubbard and Masur for holomorphic quadratic differentials. The proof analyzes infinite-energy harmonic maps from the Riemann surface to $\mathbb{R}$-trees of infinite co-diameter, with prescribed behavior at the poles. 
\end{abstract}

\section{Introduction}
Let $S$ be a smooth compact oriented surface of genus $g\geq 2$, and let $\Sigma$ denote a Riemann surface structure on $S$. Holomorphic $1$-forms on $\Sigma$ are holomorphic sections of the canonical line bundle $K$ on $\Sigma$. It is a consequence of classical Hodge theory is that the space of such differentials can be identified with the first cohomology of  the surface with real coefficents:
\begin{equation*}
H^0(\Sigma, K) \cong H^1(S, \mathbb{R})
\end{equation*}
where the identification is via the imaginary parts of periods, namely,  $\omega \mapsto \Im \displaystyle\int\limits_{\gamma_i} \omega$ where $\gamma_i$ varies over a basis of homology.

The analogue of this identification for holomorphic \textit{quadratic} differentials, is the seminal theorem of Hubbard and Masur (\cite{HubbMas}). Instead of first cohomology, they consider the space $\mathcal{MF}$ of smooth objects called \textit{measured foliations} on $S$,  characterized, up to a topological equivalence, by the induced \textit{transverse measures} of simple closed curves.  Any holomorphic quadratic differential induces a measured foliation where the measure of a transverse loop $\gamma$ is $\displaystyle\int\limits_{\gamma}  \lvert  \Im \sqrt q \rvert$: these transverse measures can thus be thought of as encoding periods of the differential. 

 Their theorem asserts that this is in fact a bijective correspondence:
 
\begin{jthm}[Hubbard-Masur] Fix a compact Riemann surface $\Sigma$ of genus $g\geq 2$. Then any measured foliation $F\in \mathcal{MF}$ is realized by a unique holomorphic quadratic differential $q$ on $\Sigma$, or equivalently, its induced measured foliation is measure-equivalent to $F$, that is, induces identical measures on any simple closed curve.
\end{jthm}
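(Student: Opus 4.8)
The plan is to convert the measured foliation into a metric target for harmonic map theory, to produce the quadratic differential as a Hopf differential, and to obtain both existence and uniqueness from the corresponding properties of harmonic maps. Given $F\in\mathcal{MF}$, I would first lift $F$ to the universal cover $\tilde S$ and form its \emph{dual $\R$-tree} $T_F$: the leaf space of the lifted foliation, metrized by the transverse measure. This $T_F$ is an $\R$-tree carrying an isometric action $\rho$ of $\pi_1(S)$, and the translation length of $\rho(\gamma)$ equals the geometric intersection number of $F$ with the class of $\gamma$, i.e. the infimal transverse measure $\int_\gamma\lvert\Im\sqrt q\rvert$ in that homotopy class. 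Realizing $F$ by some $q$ is therefore equivalent to producing a $q$ whose natural length function on $\pi_1(S)$ agrees with that of $(T_F,\rho)$.

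Next I would seek a $\rho$-equivariant harmonic map $u\colon\tilde\Sigma\to T_F$, minimizing energy in the equivariant class. Because $\R$-trees are $0$-hyperbolic, hence NPC, the Korevaar--Schoen theory of harmonic maps into non-positively curved targets applies and yields such a $u$, provided $\rho$ is non-degenerate in the sense of fixing neither a point nor an end of $T_F$; this holds for trees dual to genuine measured foliations. The key computation is that the Hopf differential $\Phi=\Hopf(u)$, the $(2,0)$-part of $u^\ast(\text{metric})$, is a \emph{holomorphic} quadratic differential on $\Sigma$: holomorphicity is precisely the Euler--Lagrange (harmonic map) equation, and it persists for the singular tree target because harmonicity is a local condition off a set of measure zero. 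One then verifies that the fibers of $u$, which are the leaves of $F$, are the leaves of the vertical foliation of $\Phi$, and that the tree metric pulls back to its transverse measure; after the sign normalization $q=-\Phi$ this foliation becomes the horizontal foliation, measured by $\lvert\Im\sqrt q\rvert$, so that $q$ realizes $F$.

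For uniqueness I would work with harmonic maps and their Hopf differentials simultaneously. If $q_1,q_2$ both realize $F$, each yields a $\rho$-equivariant harmonic map into the \emph{same} tree $T_F$; the uniqueness of energy-minimizing equivariant maps into NPC spaces (equivalently, convexity of energy along the geodesic interpolation in the target) forces the two maps to coincide up to isometries of $T_F$ that commute with $\rho$, none of which alter the Hopf differential, whence $q_1=q_2$. Alternatively, one may note that $\dim_\R H^0(\Sigma,K^2)=6g-6=\dim\mathcal{MF}$, that the horizontal-foliation map $q\mapsto F_q$ is continuous and proper, and that the harmonic-map construction supplies a continuous inverse on the image, so that a degree argument upgrades surjectivity to bijectivity.

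The main obstacle is the existence and regularity of $u$ into the \emph{singular} target $T_F$, together with the precise identification of the foliation of $\Hopf(u)$ with $F$. Classical elliptic theory does not apply directly: $T_F$ has branch points, $u$ varies only transverse to the collapsed foliation, and one must first establish enough regularity---local Lipschitz bounds and a description of the collapsing and branching loci---to define $\Hopf(u)$ and prove it holomorphic. One must then match \emph{measures}, showing $\int_\gamma\lvert\Im\sqrt q\rvert=F(\gamma)$ rather than merely matching abstract translation lengths. In the compact, finite-energy setting these steps are by now standard, but they are exactly the ingredients that must be rebuilt once poles are admitted and the energy becomes infinite---the technical core of the generalization pursued here.
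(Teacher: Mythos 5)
The paper does not actually prove this statement---it quotes the Hubbard--Masur theorem as background, citing \cite{HubbMas} for the original proof and \cite{Wolf2} for the harmonic-maps re-proof. Your proposal is a correct outline of precisely that harmonic-maps argument (dual $\R$-tree, equivariant energy minimizer into the NPC tree, holomorphic Hopf differential, no-folding/measure matching, uniqueness via convexity of energy and the subharmonic distance function), i.e.\ essentially the same approach the paper adopts and then generalizes to the meromorphic, infinite-energy case.
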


  This theorem bridges the complex-analytic and topological perspectives on Teichm\"{u}ller theory.
In this paper we generalize the Hubbard-Masur theorem  to the case of meromorphic quadratic differentials with higher order poles.
This extends our work \cite{GW15} on such differentials with second order poles, completing the treatment of meromorphic quadratic differentials.
\\

A meromorphic quadratic differential $q$ with poles of higher orders (greater than two) has an induced measured foliation with ``pole-singularities", discussed in \S2.3. 
The transverse measure of an arc to a pole is then infinite, and that of a loop around the pole is determined by the residue at the pole. In \S3, the space $\mathcal{MF}(n_1,n_2,\ldots n_k)$ of such measured foliations on $S$, with $k$ singularities  of the given pole orders $n_i \geq 3$, up to bounded isotopy and Whitehead moves, is shown to be homeomorphic to $\mathbb{R}^{\chi}$ where $\chi = {6g-6 + \sum\limits_i (n_i +1)}$, where a pole of order $n_i\geq 3$ has $(n_i-2)$ local parameters determined by the foliation around it, together with the transverse measure of a loop around the pole.

On the complex-analytical side, given a choice of a coordinate chart $U\cong \mathbb{D}$ around any pole,  we can obtain the \textit{principal part}  $P(q)$, which comprises the terms with negative powers of $z$ in the expression for the meromorphic $1$-form $\sqrt q$ in these coordinates.  Moreover, we say that a principal part $P$ is \textit{compatible} with a foliation $F$ if the real part of its residue agrees with that determined by the local parameters of the foliation at each pole (this pertains to poles of even order - see Definition~\ref{compat} for details.)  \\

In this article we shall prove:

\begin{thm}\label{thm1} Let $(\Sigma, \mathcal{P})$ be a closed Riemann surface with a non-empty  set of marked points $\mathcal{P} = \{p_1,p_2,\ldots p_k\}$ such that $\chi (\Sigma \setminus \mathcal{P}) <0$. For  each $1\leq i\leq k$, fix local coordinates around $p_i$ and let $n_i\geq 3$. 

 Then given 
 \begin{itemize}
 
 \item  a measured foliation $F\in \mathcal{MF}(n_1,n_2,\ldots n_k)$ and 
 
 \item compatible  principal parts $P_i$ at each $p_i$,
 
 \end{itemize}
 there exists a unique meromorphic quadratic differential on $\Sigma$ with  a pole of order $n_i$ at $p_i$ with a principal part $P_i$, and  an induced foliation that is measure-equivalent to $F$.

\end{thm}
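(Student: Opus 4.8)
The plan is to adapt the harmonic-maps-to-$\mathbb{R}$-trees proof of the Hubbard--Masur theorem to the meromorphic setting. To the measured foliation $F$ I would first associate its dual $\mathbb{R}$-tree $T_F$: lift $F$ to the universal cover of $\Sigma \setminus \mathcal{P}$, collapse each leaf to a point, and metrize the quotient by the transverse measure. The $\pi_1(S)$-action descends to an action by isometries, and---crucially for the higher-order poles---the infinite transverse measure of arcs running into a pole forces the action on $T_F$ to have infinite co-diameter (the quotient $T_F/\pi_1(S)$ is non-compact), with the $n_i-2$ separatrix directions at $p_i$ recorded as distinct ends of the tree. The hypothesis $\chi(\Sigma\setminus\mathcal{P})<0$ guarantees that this action is suitably nondegenerate. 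The goal then becomes the construction of a $\pi_1(S)$-equivariant harmonic map $\tilde u\colon \widetilde{\Sigma\setminus\mathcal{P}}\to T_F$, since the Hopf differential $\Hopf(\tilde u)$ of such a map is holomorphic on $\Sigma\setminus\mathcal{P}$ and its induced measured foliation realizes $F$; the remaining content is to arrange that $\Hopf(\tilde u)$ extends meromorphically across each $p_i$ with pole order $n_i$ and principal part $P_i$.

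The prescribed principal parts enter through a local model at each pole. Near $p_i$ the data $(P_i,n_i)$ together with the local foliation parameters single out a standard meromorphic quadratic differential $q_i^{\mathrm{mod}}$ on a punctured coordinate disk, and from it an explicit model harmonic map onto the corresponding end of $T_F$, whose energy density and asymptotics I can compute directly in the natural coordinate $w=\int\sqrt{q_i^{\mathrm{mod}}}$. The compatibility hypothesis is precisely the condition that the real part of the residue of $P_i$ matches the transverse measure of a loop around $p_i$ recorded by $F$, so that the model map is genuinely compatible with the global tree; this is what makes the boundary-value problem at infinity solvable. I would then demand that the sought harmonic map be asymptotic, to leading order, to these model maps near the punctures, which is what pins down the singular behavior of $\Hopf(\tilde u)$.

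For existence I would exhaust $\Sigma\setminus\mathcal{P}$ by compact subsurfaces $\Sigma_R$ obtained by deleting shrinking disks around the poles, and on each $\Sigma_R$ solve the harmonic map problem into the $\mathrm{NPC}$ space $T_F$ with Dirichlet boundary data given by the model maps; existence and uniqueness on the compact pieces is standard Korevaar--Schoen/Gromov--Schoen theory for maps into $\mathbb{R}$-trees. The heart of the argument is a priori control: using the non-positive curvature of $T_F$ (subharmonicity of the composition of the map with convex distance functions, together with the maximum principle) and comparison with the explicit model maps, I would establish uniform interior energy and $C^0$ estimates away from the poles, together with uniform decay estimates matching the model near each pole. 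These estimates let me extract a limiting equivariant harmonic map of infinite energy on all of $\Sigma\setminus\mathcal{P}$ carrying exactly the prescribed asymptotic profile.

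It then remains to identify the holomorphic quadratic differential $\Hopf(\tilde u)$. The comparison with $q_i^{\mathrm{mod}}$ should show that near each $p_i$ the Hopf differential differs from $q_i^{\mathrm{mod}}$ by a term of strictly lower order, so that it extends meromorphically with a pole of order exactly $n_i$ and principal part $P_i$; that its induced foliation is measure-equivalent to $F$ is built into the construction of $T_F$. Uniqueness I would obtain by the standard $\mathrm{NPC}$ argument: two such differentials yield two equivariant harmonic maps into the common tree $T_F$ sharing the same asymptotics at every pole, and the subharmonicity of the distance between two harmonic maps into an $\mathrm{NPC}$ space---controlled at the punctures by the shared boundary data---forces the maps, and hence the Hopf differentials, to coincide. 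I expect the main obstacle to be the a priori estimates of the third step: controlling the infinite-energy harmonic map uniformly up to the poles, and in particular showing that matching the \emph{leading} order of the model map suffices to recover the \emph{full} principal part $P_i$ rather than merely the pole order, is the delicate analytic point, and it is there that the infinite co-diameter of the target and the prescribed-residue compatibility condition do their essential work.
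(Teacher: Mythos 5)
Your proposal follows essentially the same route as the paper: dual $\mathbb{R}$-tree of the lifted foliation, model collapsing map for $P_i^2\,dz^2$ near each pole, equivariant Dirichlet problems on a compact exhaustion, uniform energy and $C^0$ control leading to a subconvergent limit, identification of the principal part by bounding the distance to the model map, and uniqueness via subharmonicity of the distance between the two collapsing maps on a parabolic surface. The delicate points you flag at the end are exactly where the paper's main technical work lies (a symmetric exhaustion reducing the pole neighborhood to a branched cover of a mean-zero harmonic function, an exponential-decay estimate on long cylinders, a partially free boundary problem, and a separate compactness argument to handle the non-locally-compact target), so your outline is consistent with, rather than divergent from, the published argument.
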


\textit{Remarks.} (i) We shall see in Lemma \ref{comp} that the space $\mathsf{Comp}(F)$ of principal parts compatible with any fixed foliation $F$ is homeomorphic to $\prod_{i=1}^k(\mathbb{R}^{n_i-2}\times S^1)$. On varying the foliation in $\mathcal{MF}(n_1,n_2,\ldots n_k)$ which is parametrized by $ {6g-6 + \sum\limits_i (n_i +1)}$ real parameters,  we have a total of $6g-6 + 2\sum\limits_i n_i $ real numbers as local parameters for the total space of the quadratic differentials under consideration. This matches the dimension of the space of meromorphic quadratic differentials with a poles of orders $n_1,n_2,\ldots n_k$.

(ii) One of the features of the main theorem is that it identifies the parameters responsible for the non-uniqueness of meromorphic quadratic differentials realizing a given measured foliation in terms of analytical data at the poles, namely, the coefficients of the principal parts (of a choice of  square roots in neighborhoods of the poles). A parallel of this for meromorphic $1$-forms is the classical theorem that  given a Riemann surface, such a differential is uniquely specified if one specifies the periods and the principal parts satisfying a zero-sum condition on the residues. Our main theorem can thus be considered an analogue of this fact, for meromorphic quadratic differentials.\\

As noted above, the case of poles of order two is dealt with in a separate paper (\cite{GW15}), so the statement of Theorem \ref{thm1} holds when some $n_i=2$ as well. In our previous work in \cite{GW1}, we also proved Theorem \ref{thm1} for a special foliation $F_0 \in \mathcal{MF}_k$ which has a ``half-plane" structure. There, instead of specifying the coefficients of the principal parts, we considered parameters determined by the induced singular-flat metric structure around each pole. For the general case we treat in this paper, the induced singular-flat geometry on the Riemann surface comprises more than just half-planes; it may include infinite strips and spiral domains as well. In fact, the transverse measures across the strips to the poles contribute to parameters for the measured foliations with pole-singularities (see \S3). 

In the final section (\S5), we discuss the relation with singular flat geometry. We consider the  total bundle of meromorphic quadratic differentials with the given poles over the Teichm\'{u}ller space of the punctured surface, and observe that the subspace realizing a fixed \textit{generic} foliation is locally parametrized by shearing along the strips (see Proposition \ref{shear}). Such a generic case was considered in the work of Bridgeland-Smith (\cite{BriSmi}), who relate the singular-flat geometry to stability conditions in certain abelian categories. 

 Another feature of this geometry induced by a holomorphic quadratic differential is that the flat coordinates allows one to define an $\mathrm{SL}_2(\mathbb{R})$-action on the bundle of quadratic differentials over moduli space. In future work, we hope to study the dynamics of this action on the bundle of \textit{meromorphic} quadratic differentials (see \cite{Boi} for comments on the non-ergodicity on lower-dimensional strata). \\

The strategy of the proof of Theorem \ref{thm1} is to consider a compact exhaustion of the punctured surface $X=\Sigma \setminus \mathcal{P}$, and construct a sequence of harmonic maps from their universal covers to the real tree that is the leaf space of the lift of the desired measured foliation. The main analytic work is to show that there is a convergent subsequence that yields a harmonic map whose Hopf differential is then the required meromorphic quadratic differential. 

 This strategy follows that of our previous paper \cite{GW1}, where the specific measured foliation we considered had a leaf-space that was a $k$-pronged tree, and we considered harmonic maps from the compact exhaustion to such a tree. One key difference is that now we pass to the universal cover, and  aim to obtain infinite-energy equivariant harmonic maps to a more general $\mathbb{R}$-tree. 
The basic analytic difficulties derive partly from the target being singular, but more significantly from the target having infinite co-diameter and the maps having infinite energy.

Also,  as mentioned before,  the singular-flat structure around the poles induced by an arbitrary meromorphic quadratic differential might comprise not only half-planes, but also horizontal strips. 
This features in the construction of the real trees -  in particular, the leaf-spaces of the foliations at the poles could now have finite-length edges dual to the strips, in addition to the infinite prongs that are dual to the half-planes; these lift to an equivariant collection of such edges in the real tree. 

The arrangement of strips destroy the apparent local symmetry of the maps near the poles. Nevertheless, a crucial new observation is that even in this general case, appropriate restrictions of the maps around the poles have enough ``symmetry" and can be thought of as branched covers of harmonic functions. We then adapt and streamline some key analytical results from our previous work (included in the present discussion for the convenience of the reader) to show the sub-convergence of the sequence of harmonic maps as desired.\\

\textbf{Acknowledgements.} Both authors gratefully appreciate support by NSF grants DMS-1107452, 1107263, 1107367 ``RNMS: GEometric structures And Representation varieties" (the GEAR network) as well as the hospitality of MSRI (Berkeley), where some of this work was initiated.  The second author acknowledges support of NSF DMS-1564374. The first author thanks the hospitality and support of the center of excellence grant `Center for Quantum Geometry of Moduli Spaces' from the Danish National Research Foundation (DNRF95) during May-June 2016.
The first author wishes to acknowledge that the research leading to these results was supported by a Marie Curie International Research Staff Exchange Scheme Fellowship within the 7th European Union Framework Programme (FP7/2007-2013) under grant agreement no. 612534, project MODULI - Indo European Collaboration on Moduli Spaces.

\section{Background}


\subsection{Quadratic differentials}

For this section, we shall fix  a compact Riemann surface $\Sigma$ of genus $g\geq 2$.

A \textit{holomorphic quadratic differential} on $\Sigma$ is a holomorphic section of the symmetric square $K^{\otimes 2}$ of the canonical line bundle on $\Sigma$, that is, a tensor locally of the form $q(z)dz^2$ for some holomorphic function $q(z)$, where $z$ is a complex coordinate on $\Sigma$. 

A meromorphic quadratic differential, correspondingly, is a meromorphic section of $K^{\otimes 2}$; the function $q(z)$ in the local expression may have poles of finite order.

Meromorphic quadratic differentials with poles at points $p_1,p_2,\ldots p_k \in \Sigma$ of orders bounded above by  $n_1,n_2,\ldots n_k \in \mathbb{Z}_+$ form vector space over $\mathbb{C}$, and it follows from the classical Riemann-Roch theorem that its dimension over $\mathbb{C}$  is $3g - 3 + \sum\limits_{i} n_i$.\\

Meromorphic quadratic differentials with poles of order at most one arise in classical Teichm\"{u}ller theory as the cotangent vectors to Teichm\"{u}ller spaces of punctured surfaces. Poles of order two also often arise as limits of ``pinching" deformations in which case one obtains noded Riemann surfaces (see, for example, \cite{Wolf3}).

 In this article we shall be interested in the case of \textit{higher order} poles, namely those with poles of order greater than two. (Poles of order two were treated in \cite{GW15}.)  Such differentials have also been considered more recently, as arising in more general limits of degenerations of Riemann surfaces, for example of singular-flat surfaces along a Teichm\"{u}ller ray (see, for example \cite{Gup3}). See also the recent work in \cite{Grushetal} 
 for the related case of compactification of strata of abelian differentials. 

\subsection{Measured foliations}\label{sec: measured foliations}

A holomorphic quadratic differential $q$ always admits \textit{canonical local charts} where the local expression of the differential away from the zeroes of $q$ may be written as $dw^2$; in terms of the original local expression $q(z)dz^2$, this change of coordinates is given by $ z\mapsto w =  \pm \displaystyle\int \sqrt q(z) dz$.

The new charts have transition functions $ z\mapsto \pm z+ c$ on their overlaps, and in particular induce:

\begin{enumerate}

\item[(A)] A \textit{singular-flat metric} pulled back from the Euclidean metric on the $w$-plane. Here the singularities are at the zeroes of $q$, where the above change-of-coordinate map is a branched covering.

In particular, at a zero of order $(n-2)$, the quadratic differential $z^{n-2}dz^2$ induces a branched cover $z\mapsto z^{n/2}=w$ on a canonical local chart so that the induced metric has cone-singularity of angle $n\pi$ and the horizontal foliation has an \textit{$n$-pronged singularity} (see Figure 1 - left, for the case of $n=3$). \\

\item[(B)]  A \textit{horizontal foliation} given by the horizontal lines in the local charts to the $w$-plane. 
Such a foliation is equipped with a transverse measure $\mu$: namely, an arc $\tau$ transverse to the horizontal measured foliation  is assigned a measure by the local expression
\begin{equation}\label{trans}
\mu(\tau) =  \displaystyle\int\limits_\tau  \left\vert    \Im \sqrt q dz\right\vert = \displaystyle\int\limits_\tau  \left\vert    \Im dw\right\vert
\end{equation}
which is well-defined since the transition maps are half-translations. 

\end{enumerate}

\textit{Remarks.}  (i) A measured foliation $F$ is a \textit{smooth} object; it can be defined on any smooth surface without reference to a complex structure or quadratic differential:
namely, $F$ is a smooth one-dimensional foliation with singularities of specific local forms (namely, induced by the kernel of $\Im (z^{n/2}dz)$ at the zeroes), and equipped with a measure on transverse arcs that is invariant under transverse homotopy. \\
(ii) A \textit{measured foliation} can also be also defined for meromorphic quadratic differentials (which are holomorphic away from the poles)  in a manner identical to (B) above; in addition to the singularities at the zeroes, there are ``pole-singularities"  we shall describe in the next section (\S2.3).\\

\textbf{Spaces of foliations.} For a closed surface, consider the space  of equivalence classes of  measured foliations as above (smooth with finitely many $n$-pronged singularities) on a closed surface,  where two foliations are equivalent if they determine the identical transverse measures on every simple closed curve ( up to transverse homotopy).

The space $\mathcal{MF}$ of  such measured foliations on a compact oriented surface of genus $g\geq 2$)  is in fact determined by the transverse measures of a  suitable \textit{finite} collection of arcs or curves, and $\mathcal{MF}\cong \mathbb{R}^{6g-6}$ (see \cite{FLP}).

Moreover, it is known (see Theorem 6.13 of  \cite{FLP}) that equivalent measured foliations differ by isotopy of leaves and Whitehead moves (which contract or expand leaves between singularities corresponding to zeroes). \\

In \S3 we shall parametrize the corresponding space of measured foliations with ``pole-singularities" (see the next subsection for more on the local structure of the foliation at the poles).\\

  \begin{figure}
  \centering
  \includegraphics[scale=0.45]{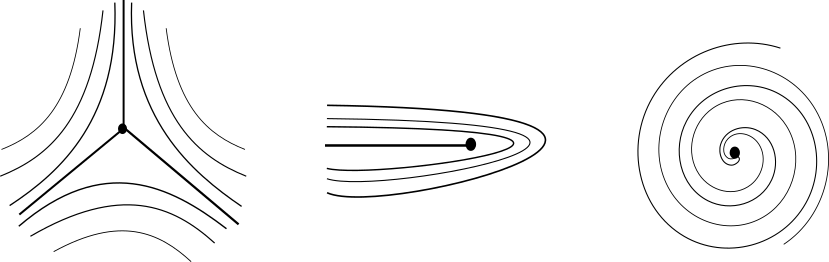}\\
  \caption{The foliations induced by $zdz^2$ (left), $\frac{1}{z}dz^2$ (middle),and $\frac{c}{z^2}dz^2$ (right) for a typical $c$. }
  \end{figure}
 
\textbf{Leaf spaces as $\R$-trees.}
Alternatively, an equivalence class of a measured foliation is specified by an $\mathbb{R}$-tree with a $\pi_1(S)$ action by isometries  that is \textit{small}, that is, stabilizer of any arc is a subgroup that does not contain any free group of rank greater than one. 

Here, an $\mathbb{R}$-tree is a geodesic metric space $(T,d)$ such that \textit{every} arc in the space is isometric to an interval in $\mathbb{R}$ - we refer to \cite{WolfT}, \cite{CullMor} for details and background.

Consider a measured foliation $\mathcal{F}$ induced by a holomorphic quadratic differential on a compact Riemann surface $X$ of genus $g\geq 2$. Lifting to the universal cover $\tilde{X}$, one obtains a one-dimensional smooth foliation $\tilde{\mathcal{F}}$ of the hyperbolic plane (with singularities) such that each leaf separates. In particular, each point of its \textit{leaf-space} $T$ is a cut-point, and when equipped with a metric $d$ induced from the transverse measures,  is in fact an $\mathbb{R}$-tree in the sense defined above. 

A \textit{collapsing map} shall be the map from $\tilde{X}$ to this metric space $(T,d)$ that maps each leaf of the induced foliation to the corresponding point in the leaf-space $T$. This map intertwines the action of $\pi_1(S)$ as deck transformations of $\tilde{X}$ with its action on the tree by isometries so that the resulting action on $(T,d)$ is a small action.

Note that by considering a measured foliation as a small $\pi_1(S)$ action on an $\R$-tree $(T,d)$, we automatically take care of the measure equivalence relation: neither an isotopy of the surface nor a Whitehead move affects the metric tree $(T,d)$ or the isometric action by $\pi_1(S)$.

 \begin{figure}
  \centering
  \includegraphics[scale=0.55]{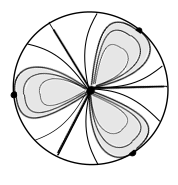}\\
  \caption{The foliation in a neighborhood of a pole of order $5$ as in Definition \ref{vp}. The distinguished points on the boundary $\partial U$  are where the half-planes (shown shaded) touch; in between them are the horizontal strips incident at the pole. }
  \end{figure}

\subsection{Structure at the poles of a quadratic differential}

We shall eventually wish to extend the discussion above from measured foliations with $m$-pronged singularities -- corresponding to neighborhoods of zeroes of holomorphic quadratic differentials -- to measured foliations with singularities that correspond to poles of meromorphic quadratic differentials.  Before jumping into that description, we first recall the pertinent theory of meromorphic quadratic differentials.

We first describe the correspondence between the local expression of a meromorphic quadratic differential at  a pole, and the induced horizontal measured foliation, that has a singularity there.

 At a pole of order $1$,  when the expression of the quadratic differential is  $\frac{1}{z}dz^2$, the change of coordinates $z \mapsto w = \sqrt z$ that converts the differential to $dw^2$ is a double cover branched at a point; the $1$-prong singularity is  thus a local ``fold" of the singular-flat structure in the regular case (see Figure 1 - center). 

For all poles of higher order, the area of the induced singular-flat metric is infinite, and the pole is at an infinite distance from any point on the surface.  For example, the singular flat metric induced near a pole of order two is isometric to a half-infinite Euclidean cylinder; the foliation comprises (typically spiralling) leaves along the cylinder towards the pole (see Figure 1 - right).

Finally, in a neighborhood of a  pole of \textit{higher order}, that is, of order greater than two,  that is the subject of this paper, we have the following two kinds of foliated sub-domains induced by the differential:

\begin{itemize}

\item (Half-planes)  Isometric to  $\{ z\in \mathbb{C}\vert  \mathrm{Im}(z) >0 \}$ 

\item (Horizontal strips) Isometric to  $\mathcal{S}(a) = \{ z\in \mathbb{C}\vert -a < \mathrm{Im}(z) <a\}$ for  $a\in \mathbb{R}_+$,
\end{itemize}
both in the standard Euclidean metric,  with the induced horizontal foliation being the horizontal lines $\{ \Im z = \text{constant}\}$.

\begin{defn}[Pole-singularities]\label{vp}  At a pole of order $n\geq 3$, it is known by the work of Strebel (\cite{Streb}) that there is a \textit{sink neighborhood} of the pole with the property that any horizontal leaf entering it continues to the pole in at least one direction.
Moreover,  any sufficiently small neighborhood of the pole contained in the sink neighborhood does not contain any $m$-pronged singularities (corresponding to the zeroes of the differential).
The local structure of the induced metric and horizontal foliation in such a neighborhood is an arrangement of half-planes and horizontal strips around the pole; in particular, there are exactly $n-2$ half-planes (that we also call \textit{sectors}) arranged in a cyclic order around the 
pole $p$. (See Figure 2).

For any choice of a smooth disk $U$ centered at a pole of order $n$ and contained in such a neighborhood, there are $n-2$ \textit{distinguished points} on $\partial U$, determined by points of tangency of the foliation in each sector, with the boundary of the disk. 
(The transverse measures of the arcs between the distinguished points are parameters of the measured foliation, as will be defined in Definition \ref{mf-pole}.) 

\end{defn}

\subsection*{Examples}

The quadratic differential $zdz^2$ on $\mathbb{C} \cup \{\infty\}$ which has a pole of order five at infinity, expressed as $\frac{1}{w^5} dw^2$ in coordinates obtained by the inversion $ z\mapsto w= 1/z$. 
The restriction of the foliation to the disk $U = \{ \lvert w \rvert <1\}$ is shown in Figure 2;  it can be calculated that the distinguished points on the boundary have equal angles between them, namely, they are at  $\{ -1, \pm e^{i\pi/3}\}$. This restriction of the foliation has three foliated half-planes around the singularity, with horizontal strips with one end incident at the singularity, between each.

An example where the presence of a horizontal strip is more apparent, is the quadratic differential $(z^2 -a) dz^2$ on $\mathbb{C}$, for some $a\in \mathbb{C}^\ast$, induces a singular-flat metric with four half-planes and a horizontal strip. (See Figure 3.)  In this case, the pole  of order six at infinity has the expression  $\left(\frac{1}{w^6} + \frac{a}{w^4}\right) dw^2$. (This pole has non-zero residue, which is introduced in the subsequent discussion.) \\

\begin{figure}
  \centering
  \includegraphics[scale=0.35]{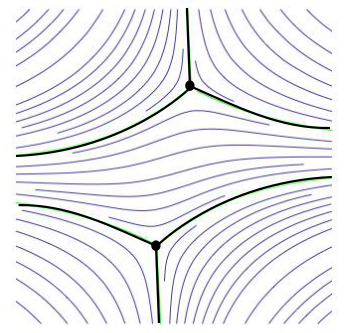}\\
  \caption{The measured foliation induced by a quadratic differential   $(z^2 -a) dz^2$ on $\mathbb{C}$ has a horizontal strip and four half-planes.}
  \end{figure}

\textbf{Measured foliations with pole singularities.} With the background on measured foliations with pronged singularities and meromorphic differentials behind us, we can now define our main object of interest:  measured foliations with pole singularities.

\begin{defn}[Measured foliations with pole singularities]\label{mf-pole} The space of measured foliations with pole-singularities, denoted  $\mathcal{MF}(n_1,n_2,\ldots n_k)$,  shall be the space of equivalence classes of measured foliations on a surface of genus $g\geq1 $ with $k$ points where it has singularities of the type of a pole of orders $n_1,n_2,\ldots n_k$, each greater than two. (See Definition \ref{vp}.) 

In this case, the equivalence relation is more refined, and is given relative to a choice of disk neighborhoods $U_1,U_2,\ldots U_k$ of the poles $p_1, p_2, \dots p_k$, respectively.  (We moreover assume that these disks are contained in the ``sink neighborhoods" of the poles - see Definition \ref{vp}.) Two such measured foliations with pole-singularities  on $S$ are equivalent if the transverse measures agree for the homotopy classes of
\begin{itemize}
\item all simple closed curves, 
\item loops around each pole,
\item simple arcs on the surface-with-boundary  $S\setminus U_1 \cup U_2 \cup \cdots \cup U_k$ with endpoints on the boundary components,  including, for each $i=1, \ldots k$, the $(n_i-2)$ arcs between the distinguished points on the boundary $\partial U_i$ 
(see Definition \ref{vp} for the definition of these points).
\end{itemize}
The $(n_i-2)$ parameters that are the transverse measures of the arcs between distinguished points on a suitable disk $U_i$ about the pole $p_i$ shall be referred to as the \textit{local parameters} at the pole. 

\end{defn}

\textit{Remark.} As mentioned at the end of \S2.2, an equivalence class of a measured foliation on a closed surface is captured by the (metrized) leaf-space in the universal cover, that is an $\mathbb{R}$-tree with a small action of $\pi_1(S)$.  Similarly, for a measured foliation with pole-singularities, the (metrized) leaf-space in the universal cover determines an $\mathbb{R}$-tree that uniquely determines the equivalence class of the measured foliation. This time, the leaf-space of the foliated half-planes around the poles determine an equivariant collection of infinite rays (or ``prongs"), and the horizontal strips, when present, add finite-length edges between them. The structure of the leaf-space in a neighborhood of the poles is described in detail in \S3.\\

\textbf{Residue.} We now turn to some of the \textit{analytical} data one can define at the pole:

 The \textit{residue} of the quadratic differential $q$ at a pole is the integral of $\pm\sqrt q$ along a simple loop around the pole.  Note that this is defined up to an ambiguity of sign, which we prefer to keep. 
This residue is a coordinate-independent complex number, and vanishes for odd order poles.

In fact, by a result of Strebel (see \S6 of \cite{Streb}) the meromorphic quadratic differential with a pole of order $n$ at the origin has the following  ``normal form" with respect to some coordinate $z$:
\begin{equation}\label{nform1}
\frac{1}{z^{n}}dz^2 \text{  when } n \text{ is odd}
\end{equation}
and 
\begin{equation}\label{nform2}
\left(\frac{1}{z^{n/2}} + \frac{a}{z}\right)^2dz^2 \text{  when } n \text{ is even.}
\end{equation}
where $\pm a \in \mathbb{C}$ is the residue that we just defined.\\

We shall use these normal forms to define a ``symmetric exhaustion" near the pole in \S4.2.\\

\textbf{Principal part.} Given a meromorphic quadratic differential $q$ with a pole of order $n$ at $p$,  for an arbitrary choice (i.e. a choice not {\it a priori} adapted to the differential) of coordinate $z$ around $p$, we have the following notion of a \textit{principal part} for $\sqrt q$ (up to a choice of sign) at the pole:\\
The differential $\sqrt q$ has the expression:
\begin{equation}\label{princ1}
\frac{1}{z^{n/2}}\left(P(z) + z^{\frac{n}{2}}g(z) \right) dz
\end{equation}
when $n$ is even, where $g(z)$ is a non-vanishing holomorphic function, and $P(z)$ is a polynomial of degree $\frac{n-2}{2}$ , and
\begin{equation}\label{princ2}
\frac{1}{z^{n/2}}\left(P(z) + z^{\frac{n-1}{2}}g(z) \right) dz
\end{equation}
when $n$ is odd, where $g(z)$ is a holomorphic function as before, and $P(z)$ is a polynomial of degree $\frac{n-3}{2}$.

In either case, the polynomial $P(z)$ shall be the \textit{principal part} of $\sqrt q$; note that this is determined by the $\frac{n}{2}$ (resp. $\frac{n-1}{2}$) complex parameters (namely the coefficients), for $n$ even (resp. odd), where constant term is also non-zero.  Note also that the ambiguity of sign means that principal part formed by taking the negative of all parameters is considered the same.  \\

 \textbf{Compatibility.} We now note that the real part of the residue at a pole of even order is determined by the parameters of the induced horizontal foliation at the pole. (At an odd order pole, the residue is zero, as can be seen by using \eqref{nform1}.) 
  
 It can be checked from the expression \eqref{nform2} of the normal form at an even-order pole that the disk $U = \{ \lvert z\rvert < \lvert a \rvert ^{-1}\}$ does not contain any zeroes of the differential, and is thus a sink neighborhood as in  Definition \ref{vp}. In particular, the $(n-2)$  points at equal angles on $\partial U$, including one at $e^{i\pi/(n-2)}$, are the ``distinguished points" where the leaves of the horizontal foliation are tangent to the boundary $\partial U$.
 
 Recall from Definition \ref{mf-pole} that the transverse measures of the arcs $\gamma_1, \gamma_2,\ldots \gamma_{n-2}$ between these distinguished points are the ``local parameters" of the induced horizontal measured foliations.

 \begin{lem} In the setting above, the real part of the residue at the pole of even order $n\geq 4$ is the alternating sum of the transverse measures of the distinguished arcs, that is, we have 
 \begin{equation}\label{comp-eq}
 \sum\limits_{j=1}^{n-2} (-1)^j \mu(\gamma_j) = 2\pi \Re(a).
  \end{equation}
  (Note that the left hand side also has an ambiguity of sign, since it depends on the distinguished arc that one starts with.) 
 \end{lem}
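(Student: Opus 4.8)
The plan is to express the alternating sum of transverse measures as the total monodromy of $\Im w$ around the pole, where $w=\int\sqrt q$ is the natural flat coordinate, and then to read off this monodromy as $2\pi\Re(a)$ directly from the normal form \eqref{nform2}. First I would set up the coordinate: in the normal form \eqref{nform2} a choice of square root is $\sqrt q=\left(z^{-n/2}+az^{-1}\right)dz$, so on a punctured disk about the pole the multivalued primitive is
\[
w(z)=\int\sqrt q=-\frac{2}{n-2}\,z^{-(n-2)/2}+a\log z+C,
\]
in which the first term is single-valued (since $(n-2)/2\in\Z$) and only $a\log z$ is multivalued. Recalling from \eqref{trans} that $\mu(\tau)=\int_\tau|\Im\,dw|$, everything is phrased in terms of $\Im w$.

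Next I would identify the distinguished points with the critical points of $\Im w$ restricted to $\partial U$. Since the horizontal leaves are the level sets $\{\Im w=\mathrm{const}\}$, a leaf is tangent to the circle $\partial U$ exactly where the tangential derivative of $\Im w$ vanishes; parametrizing $\partial U=\{z=\rho e^{i\theta}\}$ and differentiating the displayed formula gives
\[
\frac{d}{d\theta}\,\Im w=\rho^{-(n-2)/2}\cos\!\Big(\tfrac{(n-2)\theta}{2}\Big)+\Re(a).
\]
For $U$ taken small enough inside the sink neighborhood of Definition \ref{vp} the leading coefficient $\rho^{-(n-2)/2}$ dominates $\Re(a)$, so this derivative has exactly $n-2$ simple zeros $\theta_1<\dots<\theta_{n-2}$, equally spaced to leading order and alternating between local maxima and local minima of $\Im w$; these are precisely the $n-2$ distinguished points, one per sector. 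Consequently $\Im w$ is strictly monotone along each distinguished arc $\gamma_j$ from $\theta_j$ to $\theta_{j+1}$, so that $\mu(\gamma_j)=|\,\Im w(\theta_{j+1})-\Im w(\theta_j)\,|$, and because the endpoints alternate max/min the signed differences satisfy $\Im w(\theta_{j+1})-\Im w(\theta_j)=\pm(-1)^j\mu(\gamma_j)$ for one fixed global sign.

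Finally I would telescope around the loop. Writing $\theta_{n-1}=\theta_1+2\pi$ for the first distinguished point carried once around, the signed differences sum to
\[
\sum_{j=1}^{n-2}\big(\Im w(\theta_{j+1})-\Im w(\theta_j)\big)=\Im w(\theta_1+2\pi)-\Im w(\theta_1)=\Im(2\pi i a)=2\pi\Re(a),
\]
since the single-valued term in $w$ cancels in the telescoping sum and only the monodromy $2\pi i a$ of $a\log z$ survives. Combining with the previous step yields $\sum_{j=1}^{n-2}(-1)^j\mu(\gamma_j)=\pm 2\pi\Re(a)$, which is \eqref{comp-eq} up to the sign ambiguity already flagged in the statement.

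The hard part will be the middle step: rigorously matching the analytically defined zeros of $\tfrac{d}{d\theta}\Im w$ on $\partial U$ with the geometrically defined distinguished points, and verifying that consecutive ones genuinely alternate between maxima and minima so that $\Im w$ is monotone on each $\gamma_j$ — without this alternation the arc integrals would not collapse to differences of endpoint values and the telescoping would fail. This rests on the sink-neighborhood structure of Definition \ref{vp} together with the dominance of the leading term $z^{-(n-2)/2}$, which forces the sectors to alternate in the sign of $\Im w$ near the pole. I would also remark that, since the alternating sum equals the monodromy $2\pi\Re(a)$, it is in fact independent of the particular disk $U$ chosen within the sink neighborhood — the very feature that makes the notion of compatibility encoded in \eqref{comp-eq} well defined.
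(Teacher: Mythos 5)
Your proposal is correct and follows essentially the same route as the paper: both identify the alternating sum with the loop integral $\int_\gamma \Im\sqrt{q}$ (equivalently, the monodromy of $\Im w$) by observing that the sign of the integrand alternates on consecutive distinguished arcs, and then evaluate that loop integral as $2\pi\Re(a)$ from the residue term. The only difference is that you explicitly carry out the critical-point analysis of $\Im w$ on $\partial U$ that the paper dismisses as ``an easy computation,'' which is a welcome amplification rather than a new method.
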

  
 \begin{proof}
 By \eqref{nform2}, the quadratic differential is locally the square of a holomorphic one-form $\pm \omega$ at the pole.  We fix a sign for $\omega$.
  
 Let $\gamma$ be a simple closed loop linking the pole. Note that $\gamma$ is a concatenation of the distinguished arcs $\gamma_1,\ldots \gamma_{n-2}$, which are circular arcs of angle $2\pi/(n-2)$ as described above.
 
An easy computation shows that the signs of the integrals $ \displaystyle\int_{\gamma_j} \Im \sqrt{q}$, and indeed, the integrands $\Im\omega(\gamma_j(t)) \gamma_j^\prime(t)$ (for, say, an arclength parametrization of $\gamma_j)$, alternates for $j=1,2, \ldots , n-2$. By (\ref{trans}), the transverse measure of the arc $\gamma_j$ is then the absolute value of such an integral.

Hence we have:
\begin{equation}\label{mu}
\displaystyle\int_{\gamma} \Im \sqrt{q}  = \sum\limits_{j=1}^{n-2}  (-1)^j \displaystyle\int_{\gamma_j} \Im \sqrt{q}   =   \sum\limits_{j=1}^{n-2} (-1)^j \mu(\gamma_j)  
\end{equation}

However from the expression \eqref{princ1} we can calculate:

 \begin{equation}\label{mu1}
\displaystyle\int_{\gamma} \Im \sqrt{q} = \Im  \displaystyle\int_{\gamma} \frac{a}{z} dz   =  2\pi \Re (\pm a) 
\end{equation}
and we obtain the desired identity. 
\end{proof}

As mentioned in the Introduction,  we can now define:

\begin{defn}\label{compat}
A meromorphic quadratic differential on surface of  genus $g\geq 2$ has principal parts  \textit{compatible} with a measured foliation (with singularities corresponding to poles) if at each pole of even order, the equality in (\ref{comp-eq}) holds, that is, the real part of the residue agrees with that determined by the ``local parameters"  of the horizontal foliation around the pole (\textit{cf.} Definition \ref{mf-pole}). 

Note that at any pole of odd order, the residue vanishes, and there is no additional requirement for compatibility. 
\end{defn}

 Let $\mathsf{Comp}(F)$ be the space of principal parts compatible with a fixed foliation $F\in \mathcal{MF}(n_1,n_2,\ldots n_k)$. Then we have:
 
 \begin{lem}\label{comp} The space of compatible principal parts $\mathsf{Comp}(F)$ is  homeomorphic to $\prod_{i=1}^k(\mathbb{R}^{n_i-2}\times S^1)$.
 \end{lem}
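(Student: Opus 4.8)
The plan is to prove the homeomorphism by reducing to a single pole and then assembling a product. First I would note that a principal part is a $k$-tuple $(P_1,\dots,P_k)$ of local polynomials, one at each $p_i$, and that the compatibility condition of Definition~\ref{compat} is imposed pole-by-pole and, at an even-order pole, involves only the local parameters of $F$ at that pole (through \eqref{comp-eq}). Consequently, before we quotient by the global sign ambiguity $\sqrt q\mapsto -\sqrt q$, the set of compatible principal parts is a genuine product $\prod_{i=1}^k Y_i$, where $Y_i$ is the space of coefficient tuples at $p_i$ satisfying the compatibility constraint there. It therefore suffices to identify each $Y_i$, and then to understand the effect of the sign identification on the product.

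Next I would coordinatize $Y_i$ at a pole $p$ of order $n$ by the coefficients of $P(z)=\sum_j c_j z^j$. For $n$ odd, $P$ has degree $(n-3)/2$, so $Y$ consists of tuples $(c_0,\dots,c_{(n-3)/2})\in\C^{(n-1)/2}$ with the constant term $c_0\neq 0$, and there is no further constraint since the residue vanishes at odd poles. For $n$ even, $P$ has degree $(n-2)/2$, so $Y$ consists of tuples $(c_0,\dots,c_{(n-2)/2})\in\C^{n/2}$ with $c_0\neq 0$; here the top coefficient is exactly the residue $a=c_{(n-2)/2}$, and compatibility pins $\Re(a)$ to the fixed real number $r:=\tfrac{1}{2\pi}\sum_j(-1)^j\mu(\gamma_j)$ determined by $F$, while leaving $\Im(a)$ free.

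The structural observation that produces the circle factor is that the nonvanishing constant term gives $c_0\in\C^\ast\cong S^1\times\R_{>0}$. Granting this, each $Y_i$ is a product of $\C^\ast$ with Euclidean factors: in the odd case $Y\cong\C^\ast\times\C^{(n-3)/2}\cong S^1\times\R^{\,n-2}$, and in the even case the single real constraint $\Re(a)=r$ replaces the top $\C$-coordinate by a copy of $\R$, giving $Y\cong\C^\ast\times\C^{(n-4)/2}\times\R\cong S^1\times\R^{\,n-2}$. This already matches the asserted factor $\R^{n_i-2}\times S^1$ and is consistent with the dimension bookkeeping of Remark~(i).

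The last and most delicate step is to divide the product $\prod_i Y_i$ by the sign involution $(c_j)\mapsto(-c_j)$, applied simultaneously at all poles. Since every $c_0$ is nonzero this $\Z/2$-action is free, so the quotient is a manifold of the same dimension; the real content is that its homeomorphism type remains the trivial product $\prod_i(\R^{n_i-2}\times S^1)$. I would handle this by using the phase of one nonvanishing constant term to normalize the sign, recording the orbit through the sign-invariant data $|c_0|$, the squared phase $(c_0/|c_0|)^2\in S^1$, and sign-invariant ratios of the remaining coordinates. I expect this normalization to be the main obstacle: the gauge-fixing by a phase interacts nontrivially with the constrained coordinate $a$ at an even-order pole, which is confined to the line $\Re(a)=r$ rather than to all of $\C$, so one must check by hand that the free $\Z/2$-quotient introduces no twist and is genuinely a trivial $S^1$-factor (the case needing real care being $r=0$ at an even pole). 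Once this verification is carried out, taking the product over the poles yields $\mathsf{Comp}(F)\cong\prod_{i=1}^k(\R^{n_i-2}\times S^1)$, as claimed.
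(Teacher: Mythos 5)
Your coefficient count is, in substance, exactly the paper's proof: the paper reduces to a single pole, observes that the non-vanishing coefficient of the most singular term of $\sqrt q$ contributes a factor $\mathbb{C}^\ast\cong \mathbb{R}\times S^1$, that at an even-order pole the coefficient of $z^{-1}$ contributes only an $\mathbb{R}$ because its real part is pinned by (\ref{comp-eq}), and that every remaining coefficient contributes $\mathbb{R}^2$; it then takes the $k$-fold product. Notably, the paper stops there: it parametrizes compatible principal parts by their actual coefficient tuples and never passes to the quotient by the sign ambiguity $P\mapsto -P$. Up to that point your argument is correct and identical in method to the paper's.

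The additional step you introduce --- dividing by the sign involution --- is where the genuine gap lies, and you were right to flag it as the delicate point, though the difficulty is not quite where you place it. First, the sign ambiguity is a local choice of branch of $\sqrt q$ near each pole, so the relevant group is $(\mathbb{Z}/2)^k$ acting one sign per pole, not a single simultaneous sign; with a global sign the quotient of the product would not be the product of the quotients. Second, at an even-order pole with $r\neq 0$ the involution does not preserve your set $\{\Re(a)=r\}$ at all: it carries it to $\{\Re(a)=-r\}$, consistently with the sign ambiguity of the left-hand side of (\ref{comp-eq}); the compatible locus is the disjoint union of these two sheets, the involution swaps them, and the quotient is a single sheet --- so this case, which you single out as needing care, is in fact harmless. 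The case that actually fails is $r=0$ at an even-order pole: there the involution acts freely on the connected set $\mathbb{C}^\ast\times V$ with $V=\mathbb{C}^{(n-4)/2}\times i\mathbb{R}$ of odd real dimension $n-3$, and the quotient is the total space of the flat $V$-bundle over $\mathbb{C}^\ast/\{\pm 1\}\cong\mathbb{C}^\ast$ with monodromy $-\mathrm{id}_V$, whose determinant is $(-1)^{n-3}=-1$. That total space is non-orientable, hence not homeomorphic to $\mathbb{R}^{n-2}\times S^1$, so the verification you defer ("no twist is introduced") cannot be carried out. The way to land on the stated answer --- and what the paper's own proof implicitly does --- is to take $\mathsf{Comp}(F)$ to consist of signed coefficient tuples, i.e., not to quotient at all.
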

 \begin{proof}  It suffices to prove the case of a single pole of order $n$, namely when $k=1$, as the general case is obtained as a $k$-fold cartesian product of the spaces defined in this simple case. 
 For $n$ even, the principal part  $P(z)$ as in (\ref{princ1}) is determined by  $n/2$  complex coefficients, and each contributes two real parameters except the (non-zero) top coefficient  which contributes an $\mathbb{R} \times S^1$, and the coefficient of $z^{-1}$, that contributes only an $\mathbb{R}$ in order  to match the given real residue. For $n$ odd, the residue is automatically zero; however there are $(n-1)/2$ terms in the principal part, and we obtain the same parameter space as above.
 \end{proof}

 Our main theorem (Theorem \ref{thm1}) asserts that these compatible principal parts can be \textit{arbitrarily} prescribed for a meromorphic quadratic differential realizing a given measured foliation.

\subsection{Harmonic maps to $\R$-trees}
We recall the initial discussion of $\mathbb{R}$-trees from \S\ref{sec: measured foliations}, together with the collapsing map from the universal cover $\tilde{X}$ along the leaves of a measured foliation to a metric tree $(T,d)$.

A useful observation (see \cite{WolfT}, see also \cite{DaWen}) is that the collapsing map  is \textit{harmonic} in a sense clarified below; note that locally, away from the singularities, the map is $z\to \Im (z)$, which is a harmonic function.\\

A \textit{harmonic map} $h$ from  Riemann surface $X$  to an  $\mathbb{R}$-tree  is  a critical point of the energy-functional 
\begin{equation*}
\mathcal{E}(f) = \displaystyle\int\limits_{X} \lVert df\rVert^2 dzd\bar{z}
\end{equation*}
on the space of all Lipschitz-continuous maps  to the tree. (Note that the energy density in the integrand is defined almost-everywhere for such maps.)  For \textit{equivariant} harmonic maps from the universal cover $\tilde{X}$, we consider the \textit{equivariant energy}, namely the above integral over a fundamental domain. Moreover, when the fundamental domain is non-compact and the energy is infinite, as is the case in this paper, we emphasize that one restricts to \textit{compactly supported} variations in characterizing a map that is critical for energy. 

 For more on theory of harmonic maps to NPC metric spaces, we refer to  Korevaar-Schoen (\cite{KorSch}). For $\mathbb{R}$-tree  targets we could alternatively use an equivalent characterization (see \S3.4 of \cite{FarWol} and Theorem 3.8 of \cite{DaWen}), namely that an equivariant map from the universal cover $\tilde{X}$ to  $T$ is \textit{harmonic} if locally, germs of convex functions pullback to germs of subharmonic functions.

\subsection*{Hopf differential}
We have described above how to obtain a measured foliation from a holomorphic quadratic differential, how to obtain a tree from a measured foliation, and how to obtain a harmonic map from the data of a map from a Riemann surface to a tree: we next complete this circle of relationships by describing how to obtain a holomorphic quadratic differential from a harmonic map.

In passing from holomorphic quadratic differential to measured foliation to tree to harmonic map, the original holomorphic quadratic differential can be recovered (up to a fixed real scalar multiple) by taking the \textit{Hopf differential} of the (collapsing) harmonic map $h$, which is locally defined to be 
\begin{equation*}
\Hopf(h) =  -4\left( \frac{\partial h}{\partial z}\right)^2 dz^2
\end{equation*}

\textit{Remark.} The constant in the definition above is chosen such that the Hopf-differential of the map $z\mapsto \Im z$ is $dz^2$, This sign convention  differs from the one used in, say \cite{Wolf2}, and in other places in the harmonic maps literature. In this convention, the  geometric interpretation of the  Hopf differential is that the \textit{horizontal} foliation are integral curves of the directions of minimal stretch of the differential $dh$ of the harmonic map $h$. (See e.g. \cite{Wolf0} for the computations justifying this.) \\

 Thus, when the harmonic map $h$ is  a projection to a tree,  the minimal stretch direction lies along the kernel of the differential map $dh$, and thus the horizontal leaves are the level sets of points in the tree. Away from the isolated zeroes of the Hopf differential, the harmonic map takes disks to geodesic segments in the $\mathbb{R}$-tree $T$.

Using this analytical technique, the second author re-proved the Hubbard-Masur Theorem in \cite{Wolf2}. 
As discussed in the Introduction, the strategy of this paper is to construct meromorphic quadratic differentials by considering Hopf differentials of  infinite-energy harmonic maps to $\mathbb{R}$-trees  that are dual to measured foliations with pole-singularities.  \\

We conclude this section with two well-known facts about harmonic maps that shall be useful later. References for these lemmas include \cite{Jost1} and \cite{KorSch1} (\textit{eg.} pg. 633) and \cite{KorSch2}; see also the discussion in \cite{Wolf2}.

\begin{lem}\label{dist}
Let  $T$ be a metric $\mathbb{R}$-tree.  Then  an equivariant harmonic map $h:\tilde{X}  \to T$ post-composed with the distance function from a fixed point $q\in T$  is subharmonic. 
\end{lem}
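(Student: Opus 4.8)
The plan is to reduce the statement to two ingredients that are essentially already assembled in the excerpt: the characterization of harmonicity for maps to $\R$-trees recalled above (germs of convex functions on $T$ pull back under a harmonic map to germs of subharmonic functions on $\tilde X$), together with the elementary fact that the distance function from a fixed point of a tree is convex. Writing $\rho_q = d(q,\cdot):T\to\R$, the function in the statement is precisely $\rho_q\circ h$, so once $\rho_q$ is shown to be convex the conclusion follows at once. Since subharmonicity is a purely local condition, neither the non-compactness of the fundamental domain nor the infinite energy of $h$ enters, and the equivariance is irrelevant to the local computation.

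First I would verify that $\rho_q$ is convex along every geodesic of $T$. Fix a geodesic $\gamma:[0,L]\to T$ parametrized by arclength. Because the image of $\gamma$ is a convex subset of the $\R$-tree $T$, the nearest-point projection of $q$ onto it is a single point $\gamma(t_0)$, and the geodesic $[q,\gamma(t_0)]$ meets the image of $\gamma$ only at $\gamma(t_0)$ (otherwise $\gamma(t_0)$ would not be closest). Consequently, for any $t$ the concatenation $[q,\gamma(t_0)]\cup[\gamma(t_0),\gamma(t)]$ is an injective arc, hence equals the geodesic $[q,\gamma(t)]$, giving $\rho_q(\gamma(t)) = d(q,\gamma(t_0)) + |t-t_0|$. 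This is a $V$-shaped, hence convex, function of $t$; as it holds along every geodesic of the geodesic space $T$, the function $\rho_q$ is convex. (Equivalently, one may simply invoke that an $\R$-tree is $\mathrm{CAT}(0)$ and that distance functions to points in $\mathrm{CAT}(0)$ spaces are convex.)

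Applying the harmonicity characterization to the convex function $\rho_q$ then yields that $\rho_q\circ h = d(q,h(\cdot))$ is subharmonic, which is the assertion. It is worth recording the concrete local picture away from the zeros of the Hopf differential, both as a sanity check and because it isolates where the work lies: on such a disk $h$ maps into a single geodesic segment of $T$, and choosing an arclength coordinate along that segment identifies $h$ with a real-valued harmonic function $u$ (locally $u=\Im\int\sqrt{q}$, by the discussion of the collapsing map). If the projection of $q$ lands outside the interior of the image, then $\rho_q\circ h = \pm(u-c)+\mathrm{const}$ is harmonic; if it lands in the interior, then $\rho_q\circ h = |u-c|+\mathrm{const}$, which is subharmonic since the absolute value of a harmonic function is subharmonic. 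Either way subharmonicity holds on the regular locus.

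The only genuine subtlety, and the step I expect to require the most care, is the behavior at the exceptional points where $h$ is not a local diffeomorphism, namely the preimages of the zeros of the Hopf differential, where the foliation has pronged singularities and several local sheets are folded onto a single geodesic of $T$. There $\rho_q\circ h$ is continuous but only piecewise smooth, so subharmonicity must be understood in the distributional (mean-value) sense, and one must check that the folding contributes no negative Laplacian mass. This is exactly what the convex-function characterization is designed to absorb, since a convex germ composed with a branched collapse still pulls back to a subharmonic germ; a fold can only increase the relevant measure. As a fallback I would instead argue directly within the Korevaar--Schoen framework, where subharmonicity of $\rho_q\circ h$ for energy-minimizing maps into NPC targets follows from the monotonicity of compositions with convex functions, but I would present the convexity-plus-characterization route as the main line of proof.
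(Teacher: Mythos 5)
The paper offers no proof of this lemma at all: it is presented as a well-known fact with citations to Jost and Korevaar--Schoen, immediately after the paper recalls the equivalent characterization of harmonicity for $\mathbb{R}$-tree targets as the property that germs of convex functions pull back to germs of subharmonic functions. Your argument --- establishing convexity of $d(q,\cdot)$ along geodesics of $T$ via the nearest-point projection (equivalently, the CAT(0) property) and then applying that characterization, with the explicit local check $|u-c|$ away from the zeros of the Hopf differential --- is correct and is exactly the standard proof the cited references supply, so it coincides with the paper's intended route.
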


\begin{lem}\label{cour} A sequence of harmonic maps $h_i:\tilde{X} \to T$ equivariant with respect to a fixed isometric action of $\pi_1X$ on $T$,  having a uniform bound on energy on compact subsets, forms an equicontinuous family.  
\end{lem}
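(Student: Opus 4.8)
The plan is to reduce the global, equivariant statement to a purely local interior-regularity estimate for harmonic maps into NPC spaces, and then to exploit that the constant in that estimate depends only on the energy bound and not on the individual map in the sequence. Since equicontinuity is a local notion and $\tilde{X}$ is covered by $\pi_1 X$-translates of a compact fundamental domain, it suffices --- using equivariance to transport estimates between translates --- to establish a common modulus of continuity for the $h_i$ on a fixed coordinate disk $D = B_R(x_0) \subset \tilde{X}$. On such a disk the hypothesis of a uniform bound on the energy on compact subsets gives $\sup_i \mathcal{E}(h_i; D) = E_0 < \infty$.

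First I would invoke the interior regularity theory of Korevaar--Schoen (\cite{KorSch}, see also \cite{KorSch1}, \cite{KorSch2}) for harmonic maps from a two-dimensional domain into an NPC metric space, noting that an $\R$-tree is such a space (it is CAT(0)). The content I need is that there exist a modulus of continuity $\eta$, with $\eta(t) \to 0$ as $t \to 0$, and a constant $C$, both depending only on $R$ and $E_0$ and \emph{not} on the particular map, such that any harmonic $h : D \to T$ with $\mathcal{E}(h; D) \le E_0$ satisfies
\begin{equation*}
d_T(h(x), h(y)) \le C\,\eta(|x - y|) \qquad \text{for all } x, y \in B_{R/2}(x_0).
\end{equation*}
Applying this to each $h_i$ produces a single modulus of continuity valid simultaneously for the whole family on $B_{R/2}(x_0)$, which is precisely equicontinuity there; patching over the finitely many translates then finishes the argument.

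The structural input that makes the tree target tractable, and which the paper has already assembled, is the subharmonicity recorded in Lemma \ref{dist}: for each fixed $q \in T$ the composition $d_T(q, h(\cdot))$ is subharmonic, and more generally germs of convex functions on $T$ pull back to germs of subharmonic functions (\cite{FarWol}, \cite{DaWen}). Concretely, fixing $y$ and setting $f(x) = d_T(h(x), h(y))$, the function $f$ is subharmonic, vanishes at $y$, and --- since $d_T(\cdot, h(y))$ is $1$-Lipschitz --- obeys $\lVert \nabla f \rVert \le \lVert dh \rVert$, so that $\int_D \lVert \nabla f \rVert^2 \le \mathcal{E}(h; D) \le E_0$. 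One then controls the oscillation of $f$ near $y$ from this energy bound together with harmonicity, via the standard mean-value and monotonicity estimates for harmonic maps; it is exactly these estimates that upgrade the borderline two-dimensional Sobolev control into a genuine modulus of continuity with a uniform constant.

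I expect the main obstacle to be precisely the uniformity of the constant $C$ and the modulus $\eta$ across the family. A bare Sobolev embedding in dimension two fails at the borderline exponent $W^{1,2}$ and does not by itself yield continuity, so one cannot circumvent the quantitative interior regularity of Korevaar--Schoen, whose proof uses the NPC geometry of the target in an essential way (through convexity of the distance function and the resulting subharmonicity). By contrast, the equivariance and the passage from a single disk to all of $\tilde{X}$ are comparatively routine, since equivariance guarantees that the local energy bound, and hence the resulting modulus of continuity, is identical on every translate of the fundamental domain.
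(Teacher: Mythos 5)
Your argument is correct, but it takes a different route from the one in the paper. The paper's proof is the elementary Courant--Lebesgue argument: from the uniform energy bound on a small annulus $B_r\setminus B_{r^2}$, rewriting the energy as an integral over concentric circles produces one circle $C_0$ on which $\int_{C_0}\lVert dh_i\rVert^2$ is of order $E/\log(1/r)$, whence by Cauchy--Schwarz the image $h_i(C_0)$ has diameter $O\bigl(\sqrt{E/\log(1/r)}\bigr)$; the convex hull property for energy minimizers into NPC targets (the convex hull of $h_i(C_0)$ in a tree is a subtree of the same diameter) then traps the image of the inner disk, giving a modulus of continuity depending only on $E$. You instead invoke the full Korevaar--Schoen interior regularity theory, which yields a local Lipschitz (hence continuity) estimate with constants depending only on the local energy; this is a legitimate and standard citation, and your reduction to a fixed disk plus transport by equivariance is fine (indeed the hypothesis of a uniform energy bound on compacta already makes the equivariance step essentially redundant). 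The trade-off is that your route uses a substantially heavier tool --- and your middle paragraph sketching its mechanism via subharmonicity of $x\mapsto d_T(h(x),h(y))$ and ``mean-value and monotonicity estimates'' is only a gesture at that proof, not a self-contained argument --- whereas the Courant--Lebesgue route needs nothing beyond the energy bound, Fubini, Cauchy--Schwarz, and the convex hull property. For the same reason your closing claim that one ``cannot circumvent'' the quantitative Korevaar--Schoen regularity is an overstatement: the Courant--Lebesgue argument is precisely the classical two-dimensional device for circumventing it, converting the borderline $W^{1,2}$ control into a logarithmic modulus of continuity without any interior regularity theory.
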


\begin{proof}[Sketch of the proof]

This follows from the Courant-Lebesgue Lemma, that provides a modulus of continuity for a harmonic map of bounded energy, when the domain is two-dimensional. We recount the argument briefly:  From the bound on total energy, the energy of the maps  restricted to a small annulus is uniformly bounded. Considering the diameter of images of circles in that annulus, a rewriting of the total energy in the annulus as an integral over concentric circles then forces one of the circles $C_0$ in the annulus to have small diameter. Because the tree is an NPC space, the domain interior to the chosen circle have maps within the convex hull of $h(C_0)$, in this case a subtree of $T$. This proves equicontinuity for the sequence of maps on this subdomain. \end{proof}

\textit{Remark.}
In several of our constructions, we will need to find a harmonic map to an $\R$-tree as a limit of a sequence of maps whose energies are tending to an infimum. Lemma~\ref{cour} above will provide equicontinuity for that family.  The Ascoli-Arzela theorem then provides for the subconvergence of the family of maps, say $u_i$, to a harmonic map $u$  if we also know that the images $u_i(p)$ of each point $p$ in the domain lie in a compact set.
This is a particular challenge when the target is an $\R$-tree, as such trees are typically not locally compact, so even a bound on the diameter of $\{u_i(p)\}$ does not suffice to guarantee the compactness of the convex hull of that set.
In general, powerful results of Korevaar-Schoen \cite{KorSch1} and \cite{KorSch2} are useful for proving existence of energy-minimizing maps in non-locally compact settings, but in the context of infinite-energy harmonic maps as in this paper, the exact statements we needed were not available in those references, and we develop some {\it ad hoc} methods (that work in the narrow context of  $\R$-tree targets) in \S4 (see, for example,  the proof of Proposition~\ref{prop: h_i convergence}).

\section{Space of measured foliations with poles}

In this section we shall parametrize $\mathcal{MF}(n)$, the space of measured foliations on $S$ (up to measure-equivalence) with one pole-singularity defining $(n-2)$-sectors at a point $p$, where $n\geq 3$. (For the terminology and definitions, see \S2.2.)  This is done in Proposition \ref{comb}.  The arguments in this section extend to the case of more than one pole of higher order, which we omit for the sake of clarity.  The main result that the extension yields is:

\begin{prop}\label{prop:mfk} Let $S$ be a closed surface of genus $g\geq 2$, and let $n_i\geq 3$ for $i=1,2,\ldots k$. Then the space of foliations 
$\mathcal{MF}(n_1,n_2,\ldots n_k)$ is homeomorphic to  $\mathbb{R}^{\chi}$, where $\chi = {6g-6 + \sum\limits_i (n_i +1)}$.
\end{prop}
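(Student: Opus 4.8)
The plan is to construct an explicit coordinate homeomorphism $\Phi:\mathcal{MF}(n_1,n_2,\ldots,n_k)\to\R^{\chi}$ by recording the transverse measures of a finite, complete system $\mathcal{C}$ of simple closed curves, loops around the poles, and simple arcs, in the spirit of the Dehn--Thurston / train-track coordinates that give $\mathcal{MF}\cong\R^{6g-6}$ for a closed surface (\cite{FLP}). Since the analysis is purely local near each pole and global on the complement, I would first establish the one-pole case (the content of Proposition~\ref{comb}) and then extend to $k$ poles; the poles interact only through the genus-$g$ core surface, so no new phenomena arise and the parameter counts simply add.

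To set up $\Phi$, excise a sink-neighborhood disk $U_i$ about each $p_i$, leaving a compact core $\Sigma_0$ of genus $g$ with $k$ boundary circles carrying the $n_i-2$ distinguished points of Definition~\ref{mf-pole}. I would let $\mathcal{C}$ consist of (i) a complete Dehn--Thurston system on $\Sigma_0$ together with, for each boundary circle, a loop around $p_i$ and arcs recording the winding and twisting of the leaves as they approach $\partial U_i$, and (ii) for each pole the $n_i-2$ distinguished arcs, whose measures are the strip-widths, i.e.\ the local parameters. The cleanest way to certify both the dimension and that the image is a full $\R^{\chi}$ (rather than a proper cone cut out by triangle inequalities) is to build a train track $\tau$ adapted to the pole structure --- its infinite prongs recording the $n_i-2$ half-planes and its finite branches the interleaved strips --- and to check that the weight space cut out by the switch conditions is linearly isomorphic to $\R^{\chi}$. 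Bookkeeping via the Euler characteristic of $\tau$ (equivalently of a dual ideal triangulation of $\Sigma_0$ relative to the distinguished points) then yields
\[
\chi=(6g-6)+\sum_{i=1}^k\bigl((n_i-2)+3\bigr)=6g-6+\sum_{i=1}^k(n_i+1),
\]
the $n_i-2$ strip-widths being free, with three further real parameters per pole encoding the measure of a loop around $p_i$ and the twisting of the foliation across $\partial U_i$.

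Injectivity of $\Phi$ is the completeness of $\mathcal{C}$: two foliations assigning equal measures to every element of $\mathcal{C}$ assign equal measures to all simple closed curves, all loops around poles, and all the distinguished boundary arcs, hence are measure-equivalent. This is the point where one invokes that equivalent foliations differ by isotopy and Whitehead moves (the analog of Theorem~6.13 of \cite{FLP}), now taken relative to the fixed disks $U_i$. Surjectivity follows from realizing an arbitrary weight vector on $\tau$ by an honest foliation with the prescribed pole-singularities, and the continuity of $\Phi$ and $\Phi^{-1}$ is routine, since transverse measures vary continuously in the topology on $\mathcal{MF}(n_1,n_2,\ldots,n_k)$ and the inverse is given by the train-track reconstruction.

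The principal new difficulty, absent in the closed case, is the infinite transverse measure at the poles: an arc running into $p_i$ has infinite measure, so the only finite, well-defined local invariants are the arcs between distinguished points (which cross the strips) and the loops. The delicate step is therefore to show that these finitely many bounded quantities rigidly encode the full local picture --- that the cyclic arrangement of $n_i-2$ half-planes with interleaved strips carries no hidden modulus and imposes no spurious constraint --- so that the local weight space is exactly $\R^{\,n_i+1}$ with no inequalities. Once this single-pole model is pinned down, assembling the global coordinates on $\Sigma_0$ and passing from one pole to $k$ poles is routine bookkeeping.
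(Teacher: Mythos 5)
Your overall decomposition --- excise sink-neighborhood disks $U_i$, parametrize the foliation on the compact core and the local model at each pole separately, then glue --- is the same as the paper's, and your dimension count lands correctly. But the proposal has a genuine gap exactly where you flag the ``delicate step,'' and it is not merely delicate: it is the main content of the argument. You assert that the local weight space at a pole is ``exactly $\mathbb{R}^{n_i+1}$ with no inequalities,'' cut out linearly by switch conditions on a train track. This is not what happens. The raw local invariants include the transverse measure of the loop around $p_i$, which is \emph{non-negative}, so the natural coordinate space near a pole is a half-space $\mathbb{R}^{n_i-3}\times\mathbb{R}_{\geq 0}$ (Proposition~\ref{prop:pk} in the paper), whose boundary stratum (zero transverse measure of $\partial U_i$) has a genuinely different combinatorial structure: the $\mathbb{Z}$-action on the dual tree fixes a point rather than translating, the strips can be bi-infinite, and the leaf space must be analyzed by a separate doubling argument. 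One only obtains a full Euclidean space after adjoining the circle's worth of gluing data along $\partial U_i$ and identifying $\mathbb{R}_{\geq 0}\times S^1$ modulo the collapse at $0$ with $\mathbb{R}^2$ in polar coordinates (Corollary~\ref{hatpn}). A train track with a single non-negative weight that cannot be absorbed linearly will not produce a linear isomorphism to $\mathbb{R}^{\chi}$; the polar-coordinate identification is essential and is absent from your argument. The paper handles all of this by working with the dual $\mathbb{R}$-trees of the lifted foliations and invoking the metric-expansion parametrization (Theorem~\ref{mulpenk}) to enumerate the combinatorial types of strip arrangements.

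A second, smaller gap: your injectivity argument invokes ``the analog of Theorem~6.13 of \cite{FLP} relative to the fixed disks $U_i$,'' i.e.\ that foliations with equal measures on your finite curve-and-arc system differ by isotopy and Whitehead moves rel $U_i$. That relative statement is itself something to prove, not a citation. The paper circumvents it by encoding an equivalence class directly as the metrized leaf-space $\mathbb{R}$-tree with its $\pi_1$-action, which is insensitive to isotopy and Whitehead moves by construction, and by performing the gluing as an equivariant identification of trees along the axes (or fixed points) corresponding to lifts of $\partial U_i$. If you want to keep the train-track formulation, you would need to either prove the relative classification theorem or likewise pass to dual trees for the uniqueness step. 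Finally, your bookkeeping ``three further real parameters per pole encoding the measure of a loop around $p_i$ and the twisting'' names two quantities for three parameters; the correct accounting is $+3$ per boundary component for the core surface-with-boundary (Proposition~\ref{mfgb}), $n_i-1$ for the pointed local model, and $-1$ for matching the transverse measure of $\partial U_i$ on the two sides.
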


The idea of the proof is to ascertain parameters for the restrictions of a foliation in $\mathcal{MF}(n)$  to a neighborhood $U \cong \mathbb{D}$  of the pole, and to its complement $S \setminus U$. The number of real parameters for the former is $n-1$ (see Proposition \ref{prop:pk}) and the latter is $6g-6 +3$ (Proposition \ref{mfgb}), and there is one parameter (the transverse measure of $\partial U$) that should agree for the two foliations to yield a measured foliation on the surface $S$. 

To determine these parameters we consider the $\mathbb{R}$-trees that are the leaf-spaces of the measured  foliations when lifted to the universal cover. We recall that one advantage of considering these dual trees is that the trees ignore the ambiguities associated to Whitehead moves and so reflect only the equivalence classes of the foliations. Another advantage, and one we will exploit in this section, is that one can use standard results parametrizing the \textit{spaces} of such metric trees (see Theorem \ref{mulpenk} of  \S3.2).  To build such an $\mathbb{R}$-tree from the data of the parameters, the leaf space $T$ for the lift of the restriction to $S\setminus U$ and  the leaf-space $T_U$ for the lift of the foliation on $U$ are equivariantly identified along the the lifts of $\partial U$.

In the next section (\S3.1), we recall a parametrization of the measured foliations on a surface-with-boundary culled from \cite{ALPS}; in our case we shall apply it to $S\setminus U$. In \S3.2 we describe the local models and dual trees for the foliation on a disk-neighborhood  $U$ of the puncture, and provide a parametrization. Finally, in \S3.3, we describe the ``identification"  of the trees mentioned above and complete the proof of Proposition \ref{comb}, the single pole version of Proposition~\ref{prop:mfk}. As noted, that proof generalizes to the case of multiple poles stated in Proposition~\ref{prop:mfk}.

\subsection{Surface with boundary}

Recall that for a closed  surface, the space of measured foliations  $\mathcal{MF}$ can be given coordinates by Dehn-Thurston parameters, that are the transverse measures (``lengths" and ``twists") about a system of pants curves. (For details see \cite{DehnThur}.) 
 When the surface is compact with non-empty boundary, one can reduce it to the closed case by doubling across the boundaries, and taking a system of pants curves that includes the closed curves obtained from the doubled boundaries.  This yields the following parametrization - see \cite{ALPS} for a description, and Proposition 3.9 of that paper for the proof. We provide a sketch of the argument for the reader's convenience. 

\begin{prop}\label{mfgb} The space of measured foliations (without poles) $\mathcal{MF}_{g,b}$ on a surface of genus $g$ with $b$ boundary components is homeomorphic to $\mathbb{R}^{6g-6 + 3b}$.
\end{prop}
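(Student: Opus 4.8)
The plan is to reduce to the closed case by doubling, following \cite{ALPS}. First I would form the double $DS = S \cup_{\partial S} \bar{S}$, gluing $S$ to a mirror copy $\bar{S}$ along the identity on $\partial S$. Since $\chi(\partial S) = 0$ we have $\chi(DS) = 2\chi(S) = 2(2 - 2g - b)$, so $DS$ is a closed orientable surface of genus $G = 2g + b - 1$; note that whenever $\chi(S) < 0$ we get $G \geq 2$, so the closed-surface parametrization $\mathcal{MF} \cong \mathbb{R}^{6G-6}$ is available. The swap of the two copies is an orientation-reversing involution $\sigma$ of $DS$ whose fixed-point set is exactly the $b$ doubling circles coming from $\partial S$.

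Next I would set up a homeomorphism between $\mathcal{MF}_{g,b}$ and the subspace $\mathcal{MF}^{\sigma}(DS)$ of $\sigma$-invariant measured foliations on $DS$. In one direction a measured foliation on $S$ reflects across each boundary circle to a $\sigma$-invariant foliation on $DS$, and in the other restriction to one copy recovers the foliation on $S$. The care needed here lies in matching leaves and transverse measures across $\partial S$ so that the reflected object is a genuine measured foliation and the assignment is continuous in both directions; this is the content established in \cite{ALPS}.

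I would then choose a $\sigma$-invariant pants decomposition of $DS$: take a pants decomposition of $S$ with $I = 3g - 3 + b$ interior curves together with the $b$ boundary curves, reflect it to obtain the mirror interior curves, and retain the $b$ fixed doubling curves. This yields $2I + b = 6g - 6 + 3b = 3G - 3$ curves, the correct number for $DS$. Applying the closed-surface Dehn--Thurston parametrization gives $\mathcal{MF}(DS) \cong \mathbb{R}^{6G-6}$ with a length and a twist coordinate per curve. Finally I would cut out the $\sigma$-fixed locus: $\sigma$ interchanges each interior curve with its mirror while negating twists, so an invariant foliation freely specifies one length and one twist on each of the $I$ interior curves ($2I$ parameters), whereas on each of the $b$ fixed doubling curves the orientation-reversal forces the twist to vanish, leaving only a length ($b$ parameters). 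Summing gives $2I + b = 6g - 6 + 3b$ free parameters, and since the coordinates are a homeomorphism and the invariant locus is cut out by linear relations, $\mathcal{MF}_{g,b} \cong \mathbb{R}^{6g-6+3b}$.

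The hard part will be the precise identification of the $\sigma$-fixed locus inside the Dehn--Thurston coordinates: one must verify exactly how the orientation-reversing involution acts on the twist parameters, in particular that it forces the twists along the fixed doubling curves to be zero while leaving their lengths free, and one must reconcile this with the standard sign subtlety whereby the twist is constrained when its length vanishes. Showing that the resulting invariant locus is genuinely homeomorphic to a Euclidean space of the claimed dimension---and that the doubling correspondence is a homeomorphism and not merely a bijection---is where the bulk of the work lies.
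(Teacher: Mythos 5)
Your overall route—double across the boundary, use a $\sigma$-symmetric Dehn--Thurston parametrization, and count the $\sigma$-invariant coordinates—is exactly the paper's approach, and your bookkeeping of the interior curves (one free length--twist pair per curve in a fundamental domain for $\sigma$, giving $2I = 2(3g-3+b)$ parameters) agrees with the paper. The gap is in what you assign to the $b$ fixed doubling curves. You say each such curve contributes ``only a length,'' but a Dehn--Thurston length is a transverse measure, valued in $\mathbb{R}_{\geq 0}$, not $\mathbb{R}$. As written, your parameter space is $\mathbb{R}^{2I}\times\mathbb{R}_{\geq 0}^{b}$, a manifold with corners, which is not homeomorphic to $\mathbb{R}^{6g-6+3b}$; and the locus you would need to identify is not cut out by linear relations in any global linear coordinates, since the Dehn--Thurston chart carries the identification $(0,t)\sim(0,-t)$ at each curve. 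You flag this as ``the standard sign subtlety'' to be dealt with later, but it is not a technicality one can defer: it is the one place where a genuinely new parameter must be introduced.

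The missing idea, which is how the paper (following \cite{ALPS}) resolves it, is to extend the boundary-curve parameter to a full real line by a case analysis at transverse measure zero: either the doubling curve $B$ has positive transverse measure $\mu(B)>0$, in which case one sets $i(B)=\mu(B)$, or $\mu(B)=0$, in which case the ($\sigma$-invariant) foliation contains a maximal foliated annulus of closed leaves parallel to $B$, and one sets $i(B)=-L$ where $L\geq 0$ is the transverse measure \emph{across} that annulus. This single parameter $i(B)\in\mathbb{R}$ (together with the vanishing of the twist along $B$, which you correctly deduce from the orientation-reversing symmetry) is what turns the half-line of lengths into the full $\mathbb{R}$ factor and yields $\mathbb{R}^{2I}\times\mathbb{R}^{b}=\mathbb{R}^{6g-6+3b}$. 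Without it, your dimension count is right but the claimed homeomorphism type is wrong.
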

\begin{proof}[Sketch of proof]
For simplicity, consider the case of a  foliation $F$ with transverse measure $\mu$, on a surface with a single boundary component. By an isotopy of the foliation relative to the boundary, one can assume that the leaves of $F$ are either all parallel,  or all orthogonal, to the boundary. By doubling across the boundary, one obtains a measured foliation $\hat{F}$ on a compact surface with an involutive symmetry. Such measured foliations can be parametrized by Dehn-Thurston parameters (transverse measures and twists) with respect to a  symmetric pants decomposition that includes the curve representing the (doubled) boundary.  Other than this doubled boundary, there are then a total of $6g- 4$  ``interior" pants curves. Each ``interior" pants curve $C$  contributes two parameters: the transverse measure $\mu(C) \in \mathbb{R}_{\geq 0}$ and the ``twist" coordinate $\theta(C) \in \mathbb{R}$ (see Dylan Thurston's article  \cite{DehnThur} for details). This gives a parameter space $(\mu(C), \theta(C)) \in \mathbb{R}_{\geq 0} \times \mathbb{R}$, 
and together with the identification $(0,t) \sim (0, -t)$, this gives a parameter space  $\mathbb{R}_{\geq 0} \times \mathbb{R}/\sim$ that is homeomorphic to $\mathbb{R}^2$.

 Taking into account the involutive symmetry, exactly $3g-2$ interior pants curves contribute independent pairs of parameters. 
 
 For the curve $B$ corresponding to the doubled boundary, the ``twist" coordinate vanishes. Either the curve $B$ has positive transverse measure $i(B) = \mu(B)$, or  $\hat{F}$ contains a foliated cylinder of closed leaves parallel to $B$, and we can define $i(B) = - L$ where $L$ is the transverse measure across the cylinder. This yields a parameter $i(B) \in \mathbb{R}$.
 
 With two real parameters from each of the $3g-2$ interior pants curves, and one from the boundary curve, we obtain the parameter space described in the statement. 
\end{proof}

\subsection{Foliations around a pole}

Consider a coordinate disk $U$ centered at the pole singularity  $p$ of order $n\geq 3$.  First, we define a space of ``model" measured foliations on $U$:

\begin{defn}[Model foliations]\label{defn:pn} The space $\mathcal{P}_{n}$ of measured foliations  on  $U \cong \mathbb{D}$ comprises equivalence classes of measured foliations with a pole-singularity at the origin with $(n-2)$ sectors as in Definition \ref{vp},  such that any leaf has at least one endpoint at the origin. In the terminology of Definition \ref{vp}, the disk $U$ is contained in the sink neighborhood of the pole. Here, two measured foliations in $\mathcal{P}_{n}$ are equivalent if they differ by Whitehead moves or an isotopy of leaves relative to the boundary $\partial {U}$. 
\end{defn}

\textit{Remark.} In the definition above we do not assume that $U$ is free of any $m$-pronged singularities; for a measured  foliation in $\mathcal{P}_{n}$, its  foliated half-planes and horizontal strips may be arranged with such singularities lying on their boundaries. In particular, the horizontal strips could be bi-infinite in the induced singular flat metric on $U \setminus p$ (running from the pole to itself, and contained in $U$), or half-infinite (one end goes to the puncture, the other gets truncated at the boundary $\partial U$.   This gives rise to different combinatorial possibilities, which are involved in the parametrization that follows. \\

Heuristically, the parameters for these measured foliations on $U$ are the transverse measures of the $(n-2)$ strips, together with the total transverse measure of the boundary.  In what follows, we shall work with the tree that is the leaf-space of the lift of the foliation to the universal cover; the transverse measures then provide the metric on the tree. The different combinatorial possibilities of arrangements of the half-planes and strips are then reflected in the combinatorial structure of the tree.  (See Figure 4.) 

This tree shall form part of the real tree that determines the equivalence class of a measured foliation with a pole-singularity on the surface $S$ (see the remark following Definition \ref{mf-pole}). \\

 \begin{figure}
  \centering
  \includegraphics[scale=0.5]{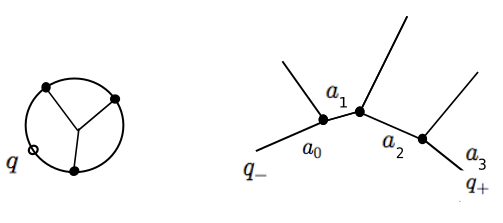}\\
  \caption{The figure on the left is a  metric graph that is a leaf-space of a model foliation on $U$ (see Definition \ref{defn:pn}). The leaf-space of the foliation restricted to the punctured disk $U \setminus p$ lifts to a tree with a $\mathbb{Z}$-action; the figure on the right is one possible  fundamental domain of the action.}
  \end{figure}

We first recall a definition and a result from \cite{MulPenk}:

\begin{defn}\label{metx}  A \textit{metric expansion} of a tree $T$ with a single vertex $v$ with $\mathcal{V} \geq 4$ labelled edges emanating from it,  is obtained by replacing the vertex with a metric tree.  (See Figure 5.) 
\end{defn}

\begin{thm}[Theorem 3.3 of \cite{MulPenk}]\label{mulpenk} The space of metric expansions of a metric tree $G$ at a vertex $v$ of valence $\mathcal{V} \geq 4$  is homeomorphic to $\mathbb{R}^{\mathcal{V}-3}$. 
\end{thm}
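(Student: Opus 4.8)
The plan is to translate the metric-geometric notion of an expansion into the combinatorics of dissections of a convex polygon, and then to recognize the resulting cell structure as the normal fan of the associahedron, whose support is automatically all of $\mathbb{R}^{\mathcal{V}-3}$.

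The first and most essential observation is that the $\mathcal{V}$ edges at $v$ carry a cyclic order, inherited from the ribbon structure of the graphs in which these trees occur; I would build this into the notion of an admissible expansion, requiring it to be a planar (ribbon) tree realizing the same cyclic arrangement of the $\mathcal{V}$ half-edges. This constraint is indispensable: without it, already for $\mathcal{V}=4$ one obtains three rays meeting at a point rather than a line, which is not a manifold. With the cyclic order fixed, a maximal expansion -- one all of whose new vertices are trivalent -- corresponds canonically to a triangulation of a convex $\mathcal{V}$-gon whose boundary edges are the labelled half-edges in cyclic order, the internal tree-edges being the diagonals. Since every such triangulation uses exactly $\mathcal{V}-3$ diagonals, assigning an arbitrary positive length to each internal edge produces one open cell $\mathbb{R}_{>0}^{\mathcal{V}-3}$ per triangulation.

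Next I would assemble these cells. Letting some internal lengths tend to $0$ collapses the corresponding diagonals and yields expansions with higher-valence vertices; combinatorially these are the partial triangulations (dissections), which form the common faces along which the top cells are glued, with the unexpanded star $v$ as the cone point corresponding to the empty dissection. Thus the space of metric expansions is a cone complex whose poset of cones is the poset of dissections of the $\mathcal{V}$-gon under refinement, a $k$-diagonal dissection giving a $k$-dimensional cone. Choosing linear coordinates -- for instance a $\mathbf{g}$-vector assignment to the diagonals, the finite-type-$A_{\mathcal{V}-3}$ incarnation of these trees -- realizes this abstract cone complex as a genuine polyhedral fan inside a fixed $\mathbb{R}^{\mathcal{V}-3}$, with uniform rescaling of lengths becoming dilation.

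The crux is to prove this fan is complete and simplicial, i.e.\ that its cones cover $\mathbb{R}^{\mathcal{V}-3}$ and meet only along shared faces; this is exactly the assertion that the fan is the normal fan of the $(\mathcal{V}-3)$-dimensional associahedron, under the standard dictionary sending triangulations to vertices, diagonals to facets, and dissection-refinement to the (order-reversed) face lattice. This identification is the main obstacle, since one must verify that flips of triangulations match the codimension-one wall-crossings of the polytope with no gaps or overlaps; I would do this either by exhibiting an explicit realization of the associahedron (e.g.\ Loday's) and checking the combinatorial correspondence, or by induction on $\mathcal{V}$ with base case $\mathcal{V}=4$ (the two triangulations of a square gluing to a line) and inductive step deleting a single leaf to reduce to the complete fan one dimension lower. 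Because the normal fan of a full-dimensional polytope necessarily has support equal to the whole ambient space, completeness then gives that the support is all of $\mathbb{R}^{\mathcal{V}-3}$; the identity map on this support is the required homeomorphism, and the cone point recovers the trivial expansion.
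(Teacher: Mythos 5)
The paper offers no proof of this statement: it is imported verbatim as Theorem~3.3 of \cite{MulPenk}, so there is no internal argument to measure yours against. Your proof is correct and is essentially the standard one, in the same spirit as the cited source: maximal expansions compatible with the cyclic order are dual to triangulations of a convex $\mathcal{V}$-gon, partial expansions to dissections, and the resulting cone complex is the normal fan of the $(\mathcal{V}-3)$-dimensional associahedron, whose completeness and simpliciality yield the homeomorphism with $\mathbb{R}^{\mathcal{V}-3}$. The one substantive point you add --- and it is worth adding --- is the explicit requirement that an admissible expansion be a \emph{planar} tree respecting the cyclic order of the $\mathcal{V}$ labelled edges: Definition~\ref{metx} says only ``labelled edges'' and ``replacing the vertex with a metric tree,'' and, as your $\mathcal{V}=4$ example shows (three rays meeting at a point rather than a line), the statement is false without this constraint; in the paper's applications the constraint is supplied implicitly by the surface orientation (see the cyclic labelling in the proof of Proposition~\ref{prop:pk} and in Figure~5), but it is not part of the written definition. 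Two small points to tidy in a full write-up: state explicitly that the homeomorphism is the piecewise-linear map sending an expansion with internal edges $d_1,\dots,d_k$ of lengths $\ell_1,\dots,\ell_k$ to $\sum_i \ell_i\,\mathbf{g}(d_i)$, with continuity of the inverse coming from the finite cover of $\mathbb{R}^{\mathcal{V}-3}$ by closed cones on each of which the map is a linear homeomorphism; and the completeness and simpliciality of the fan is itself a theorem (Loday's realization of the associahedron, or the type-$A$ $\mathbf{g}$-vector fan), so either cite it or carry out in detail the induction on $\mathcal{V}$ that you sketch.
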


\textit{Remark.} A metric tree obtained by a generic metric expansion at a vertex of valence $\mathcal{V}$  has $\mathcal{V}-3$ edges of  finite length. 
For each fixed isomorphism type of a generic graph, these edge-lengths then form a  parameter space homeomorphic to $\mathbb{R}_+^{n-1}$. Different isomorphism types of such metric trees are obtained by Whitehead moves on the finite-length edges;  the corresponding parameter spaces fit together to form a cell.

\begin{prop}\label{prop:pk} The space of model foliations $\mathcal{P}_n$ is homeomorphic to $\mathbb{R}^{n-3} \times \mathbb{R}_{\geq 0}$. 
\end{prop}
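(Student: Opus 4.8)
The plan is to exploit the duality between a model foliation and its leaf-space tree, and then to read off the parameters from the combinatorial and metric structure of that tree near the pole. First I would associate to each $F\in\mathcal{P}_n$ the $\mathbb{R}$-tree $\tilde{T}$ obtained as the leaf-space of the lift of $F$ to the universal cover of the punctured disk $U\setminus p$, together with its $\mathbb{Z}$-action by deck transformations (generated by the loop around $p$). The first point to establish is that this decorated tree is a \emph{complete invariant} of the equivalence class in $\mathcal{P}_n$: since neither an isotopy of leaves relative to $\partial U$ nor a Whitehead move changes the metric tree or the isometric $\mathbb{Z}$-action, the assignment $F\mapsto(\tilde{T},\mathbb{Z})$ descends to $\mathcal{P}_n$, and I would check that distinct equivalence classes yield non-isometric decorated trees, so that $\mathcal{P}_n$ injects into a space of such trees.

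Next I would pin down the shape of $\tilde{T}$. By Definition~\ref{vp}, a fundamental domain of the $\mathbb{Z}$-action sees the $(n-2)$ half-planes, each dual to an infinite prong, together with the horizontal strips, each dual to a finite-length edge, and the two ``ends'' along which the domain is glued to its translates. The finite part of the tree in such a fundamental domain is therefore a \emph{metric expansion} (Definition~\ref{metx}) of a single vertex whose valence is $n$, namely the $(n-2)$ prongs together with these two gluing directions. Applying Theorem~\ref{mulpenk} with $\mathcal{V}=n$ then shows that the space of all finite cores -- the finite edge-length data of the strips, allowing every combinatorial type and its Whitehead degenerations -- is homeomorphic to $\mathbb{R}^{n-3}$, the distinct isomorphism types gluing along the strata where a strip width degenerates to zero, exactly as in the remark following Theorem~\ref{mulpenk}. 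In the base case $n=3$ there are no strips and the core is a single prong, so this factor is a point, consistent with $\mathbb{R}^{0}$.

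The one remaining parameter is the total transverse measure of the boundary loop $\partial U$ (equivalently, the translation length of the $\mathbb{Z}$-action measured transversally to the prongs); this records the size of the leaf-space seen from the boundary and ranges over $\mathbb{R}_{\geq 0}$, the value $0$ being the degenerate collapsed configuration. Assembling the core data with this boundary measure gives a continuous bijection $\mathcal{P}_n\to\mathbb{R}^{n-3}\times\mathbb{R}_{\geq 0}$, and I would establish continuity of the inverse by reconstructing the foliation on $U$ directly from the tree data, laying out the prescribed half-planes and strips in their cyclic order and re-gluing $\mathbb{Z}$-equivariantly, in the spirit of the local parameters of Definition~\ref{mf-pole}.

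I expect the main obstacle to lie at the degenerate configurations. One must show that as strip widths tend to $0$ the combinatorial type of the core changes by a Whitehead move while the corresponding point of $\mathbb{R}^{n-3}\times\mathbb{R}_{\geq 0}$ varies continuously, and conversely that every such boundary limit is realized by a genuine model foliation with the correct number of sectors; this is the step where the cell structure supplied by Theorem~\ref{mulpenk} for the valence-$n$ vertex must be matched stratum-by-stratum with the pieces of $\mathcal{P}_n$ cut out by collapsing strips, so that they fit into a single product cell rather than a more complicated quotient. The identification of the correct valence -- that the two gluing directions raise the effective valence from $n-2$ to $n$ -- is precisely what fixes the exponent $n-3$, and getting that bookkeeping right, together with the compatibility of the boundary-measure parameter across strata, is the crux of the argument.
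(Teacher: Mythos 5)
Your treatment of the positive-transverse-measure case is essentially the paper's own argument: lift to the universal cover of $U\setminus p$, observe that a fundamental domain of the $\mathbb{Z}$-action is a metric expansion of a valence-$n$ vertex (the $(n-2)$ prongs plus the two gluing directions along the translation axis), apply Theorem~\ref{mulpenk} to get $\mathbb{R}^{n-3}$, and record the translation length as the final $\mathbb{R}_+$ parameter. That bookkeeping is correct and matches the paper's Case~I.

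The gap is exactly where you locate it, and you do not close it. When $\partial U$ has zero transverse measure the $\mathbb{Z}$-action on the leaf-space has a \emph{fixed point} rather than a translation axis, so the picture of a valence-$n$ vertex with two gluing directions simply does not apply: the fundamental domain is a rooted planar tree with $(n-2)$ prongs and a distinguished root $q$, and it is not a priori clear that this stratum contributes an $\mathbb{R}^{n-3}$ that glues continuously onto the boundary of the Case~I cell. The paper resolves this with a separate argument: double the rooted fundamental domain across $q$ to obtain a metric expansion of a valence-$(2n-4)$ vertex with an involutive symmetry, so that Theorem~\ref{mulpenk} gives $2n-7$ parameters of which all but the root edge occur in symmetric pairs, yielding $\frac{(2n-7)-1}{2}+1=n-3$ independent parameters. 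Moreover the root-edge parameter is \emph{signed} (negative values correspond to the bi-infinite strip adjacent to the boundary being asymptotic to a different labelled direction), which is what makes that factor a full $\mathbb{R}$ rather than $\mathbb{R}_{\geq 0}$ and lets the two cases assemble into $\mathbb{R}^{n-3}\times\mathbb{R}_{\geq 0}$. Declaring this "the crux" without supplying the doubling-and-symmetry count leaves the proposition unproven on the entire zero-measure stratum.
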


\begin{proof}

For a foliation in $\mathcal{P}_n$ consider its restriction to the punctured disk $U \setminus p$, where $p$ is the pole singularity.

In the universal cover of the punctured disk $U\setminus p$, the lift of this foliation thus has a leaf-space that is a metric tree $Y$ with a $\mathbb{Z}$-action by isometries (here $ \mathbb{Z} \cong \pi_1(U \setminus p))$.  Each edge dual to the lift of a horizontal strip acquires a finite length that is the transverse measure, or Euclidean width, of the strip.
Note that the $\mathbb{Z}$-action preserves each connected component of the lift of $\partial U $, and if such a component is a bi-infinite path (and embedded $\mathbb{R}$), the restriction of the action to the component is by translations. 

It suffices to parametrize the metric trees that are the possible fundamental domains for such an action.\\
 
  \begin{figure}
  \centering
  \includegraphics[scale=0.45]{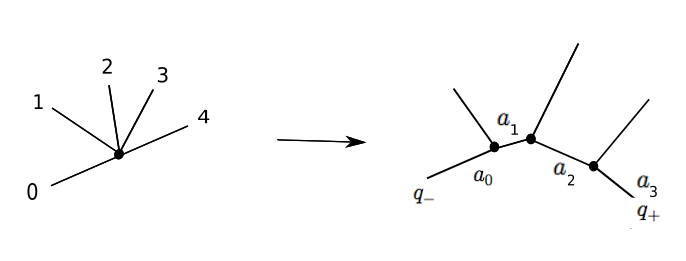}\\
  \caption{The fundamental domain of the $\mathbb{Z}$-invariant leaf-space of the foliation in $\mathcal{P}_n$ when lifted to $\widetilde{U \setminus p}$ can be obtained for the metric expansion of a valence $n$ vertex when $\partial U$ has positive tranverse measure. The labelling in  cyclic order comes from the surface orientation. }
  \end{figure}

 \textit{Case I.  The boundary $\partial U $ has positive transverse measure.}

Note that the $\mathbb{Z}$-action on the tree is by translation along an infinite axis that is a concatenation of finite-length edges. Any  fundamental domain $D$  has $(n-2)$ infinite rays with a labelling based on a cyclic ordering acquired from the orientation of the surface, and in the generic case, $D$ has $(n-1)$ finite-length edges; this is one more than the number of edges in the leaf-space on $U \setminus p$ because of a choice of basepoint $q$ on $\partial U$ that bounds the fundamental domain and, generically, separates an edge between two infinite rays (see Figure 5 for the case when $n=5$).  We choose the basepoint $q$ on an edge dual to a non-degenerate strip; the assumption of positive transverse measure implies that not all strips are degenerate.

The two extreme points of this axial segment in $D$ are then $q_-$ and $q_+$ respectively, where the latter point is the image under translation of the former one, and we include only one of them in $D$.

The length of the path between $q_-$ and $q_+$ is then the  translation distance that is the transverse measure of the boundary.

The space of such metric trees can be obtained from metric expansions of a single tree:

Namely, let $T_U$ be a metric tree with a single vertex of valence $n$, and edges are labelled  $\{0,1,\ldots n-1\}$ in a cyclic order. (See Figure 5.)

Any metric tree that can be a fundamental domain $D$  then arises by considering an expansion of $graph$,  and assigning an additional edge lengths $a_0$ and $a_{n-1}$ to the edges labelled $0$ and $n-1$, that are adjacent to the endpoints $q_-$ and $q_+$ respectively, such that $a_0+ a_{n-1} >0$.  Here, the non-negative edge-lengths $a_0$ and $a_{n-1}$ cannot \textit{both} be zero as we had chosen the basepoint $q$ to lie in the interior of an edge dual to a non-degenerate strip. 

Since $q_-$ and $q_+$ are identified by the translation, it is this latter sum that is the real parameter of interest. 

The metric expansions contribute $(n-3)$ real parameters, by Theorem \ref{mulpenk}, and the two additional edge lengths $a_0$ and $a_{n-1}$ contribute one.  Together, the space of possible measured foliations with positive transverse measure is then homeomorphic to $\mathbb{R}^{n-3} \times \mathbb{R}_+$ where we consider the last parameter to be the total transverse measure (or translation length). \\

 \begin{figure}
  \centering
  \includegraphics[scale=0.45]{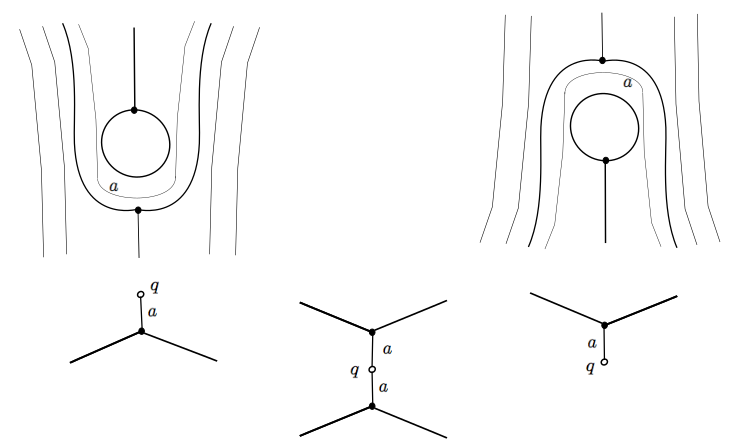}\\
  \caption{The figures on top show measured foliations in $\mathcal{P}_4$ (with the pole at $\infty$)  with the boundary of $U$ having zero transverse measure. Their leaf-spaces (bottom) have a single edge and two infinite rays; here $a$ is the transverse measure across the strip. The tree in the center is obtained by doubling this leaf-space, and is a metric expansion of a valence-$4$ vertex. }
  \end{figure}

 \textit{Case II.  The boundary $\partial U $ has zero transverse measure.}

In this case the $\mathbb{Z}$-action on the leaf-space has no translation distance, and fixes a distinguished point $q$.  The fundamental domain $D$ is then a planar metric tree, with $(n-2)$ infinite prongs that acquire a cyclic labelling from the orientation of the surface, together with the distinguished vertex $q$ that is a ``root" of the tree.

Doubling the fundamental domain $D$ across $q$, we obtain a metric expansion of a tree with a valence $(2n-4)$ vertex having an involutive symmetry.  

By Theorem \ref{mulpenk}, if the edge-lengths are arbitrary, then the parameter space is $\mathbb{R}^{2n-7}$. However, except the edge containing $q$, the other edge-lengths occur in pairs because of the involutive symmetry. Hence the number of  real parameters arising from these metric expansions is $\frac{2n-7-1}{2} +1 = n-3$. 
Here, the edge incident on $q$ is dual to a bi-infinite strip adjacent to the boundary, and its edge-length is the transverse width of the strip. This real parameter takes both positive and negative values, where the negative values correspond to the bi-infinite strip being asymptotic to a different labelled direction. (See Figure 6.) \\

Together, Cases I and II fit together to give a total parameter space that is then $\mathbb{R}^{n-3} \times \mathbb{R}_{\geq 0}$: note that each case results in a parameter space whose first factor is $\mathbb{R}^{n-3}$, while the second factors are $\mathbb{R}_+$ and $\{0\}$ whose union is the desired $\mathbb{R}_{\geq 0}$.
\end{proof}

For the identification of the foliated disk $U$ with $S\setminus U$ along the boundary $\partial U$, there is an additional circle's parameter of gluing in the case when the boundary has positive transverse measure. It is convenient to record this as part of the data of the foliated measured foliation on $U$, and we denote by $\widehat{\mathcal{P}}_n$  the resulting space of \textit{pointed} measured foliations. A consequence of the preceding parametrization is:

\begin{cor}\label{hatpn}
The space of pointed measured foliations $\widehat{\mathcal{P}}_n$  is homeomorphic to $\mathbb{R}^{n-1}$.
\end{cor}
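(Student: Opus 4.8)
The plan is to build $\widehat{\mathcal{P}}_n$ directly on top of the parametrization of $\mathcal{P}_n$ furnished by Proposition \ref{prop:pk}, isolating the single coordinate in which the two spaces differ. Recall that Proposition \ref{prop:pk} gives $\mathcal{P}_n \cong \mathbb{R}^{n-3} \times \mathbb{R}_{\geq 0}$, where the $\mathbb{R}^{n-3}$ factor records the metric-expansion and strip-width data of the leaf-space, while the final $\mathbb{R}_{\geq 0}$ factor records the total transverse measure $r = \mu(\partial U)$, equivalently the translation length of the $\mathbb{Z} \cong \pi_1(U \setminus p)$ action on the leaf-space tree. The passage from $\mathcal{P}_n$ to $\widehat{\mathcal{P}}_n$ adds exactly the datum of a basepoint on $\partial U$ (the gluing marking), and this marking interacts only with the last coordinate $r$, leaving the first $\mathbb{R}^{n-3}$ factor untouched. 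So I would first argue that $\widehat{\mathcal{P}}_n \cong \mathbb{R}^{n-3} \times M$, where $M$ is the space of pairs (total transverse measure, marking), and then identify $M$ with $\mathbb{R}^2$.

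The core step is the analysis of the marking fiber, which splits according to the two cases of Proposition \ref{prop:pk}. When $r > 0$ (Case I), the $\mathbb{Z}$-action is a translation along an axis, and a choice of basepoint on $\partial U$ is a choice of point on this axis modulo the translation, so the marking ranges over a circle $S^1 \cong \mathbb{R}/r\mathbb{Z}$ of circumference $r$. When $r = 0$ (Case II), the action fixes the distinguished vertex $q$, so there is a canonical marking and the fiber is a single point. Thus $M$ is the set $\mathbb{R}_{\geq 0} \times S^1$ with the circle $\{0\} \times S^1$ collapsed to a point, which is precisely the (open) cone on $S^1$. The key step is then to observe that the polar-coordinate map $(r, \theta) \mapsto (r\cos\theta, r\sin\theta)$ gives a homeomorphism of this cone onto $\mathbb{R}^2$, whence $M \cong \mathbb{R}^2$ and $\widehat{\mathcal{P}}_n \cong \mathbb{R}^{n-3} \times \mathbb{R}^2 = \mathbb{R}^{n-1}$.

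The hard part will be verifying that the natural topology on $\widehat{\mathcal{P}}_n$ (the one induced by transverse measures of the distinguished arcs and of $\partial U$, together with the marking) actually agrees with the cone topology as $r \to 0$, so that the polar-coordinate map is a genuine homeomorphism and not merely a continuous bijection. One must check that as the boundary measure degenerates, the marking circle collapses continuously onto the fixed vertex $q$ of Case II, so that a sequence of pointed foliations with $r_j \to 0$ converges in $\widehat{\mathcal{P}}_n$ independently of the markings. I would handle this by noting that for small $r$ the basepoint lies within transverse-measure distance $r$ of the vertex $q$; thus the marking datum becomes negligible at exactly the rate that makes the collapse continuous, since the marking literally lives in $\mathbb{R}/r\mathbb{Z}$. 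Granting this compatibility, the product decomposition together with the cone computation completes the proof.
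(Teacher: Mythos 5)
Your proposal is correct and follows essentially the same route as the paper: the paper likewise treats the pair (transverse measure of $\partial U$, circle of gluing markings) as polar coordinates $(\tau,\theta)\in\mathbb{R}_+\times S^1$ on the punctured plane, with the zero-measure case filling in the origin, yielding $\mathbb{R}^{n-3}\times\mathbb{R}^2$. Your added attention to why the cone topology agrees with the natural topology as $r\to 0$ is a reasonable elaboration of a point the paper passes over silently, but it does not constitute a different argument.
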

\begin{proof}
The non-negative real factor $\mathbb{R}_{\geq 0}$  in the parametrization for $\mathcal{P}_n$ can be thought of as the transverse measure. For each positive measure $\tau$  the circle's ambiguity provides a parameter space $(\tau, \theta)  \in \mathbb{R}_+ \times S^1$, which can be thought of as polar coordinates for the punctured plane. The case of zero transverse measure contributes a point that fills the puncture. Thus,   $\widehat{\mathcal{P}}_n$ is homeomorphic to $\mathbb{R}^{n-3} \times \mathbb{R}^2$ = $\mathbb{R}^{n-1}$.
\end{proof}

\subsection{Parametrizing $\mathcal{MF}(n)$}

We now combine the results of the preceding sections and prove the following paramaterization.  In course of the proof, we give a careful description of the  $\mathbb{R}$-tree, dual to the lift of a measured foliation with pole-singularities to the universal cover, that shall be the non-positively curved (NPC) target of the harmonic map we construct in \S4.

\begin{prop}\label{comb}
The space $\mathcal{MF}(n)$ is homeomorphic to $\mathbb{R}^{6g-6 + n+1}$.
\end{prop}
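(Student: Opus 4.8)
The plan is to cut a foliation $F \in \mathcal{MF}(n)$ along the boundary $\partial U$ of a disk neighborhood $U$ of the pole, parametrize the two resulting pieces using the results already assembled, and then describe precisely how the pieces are reassembled---both as foliations and, more usefully, as dual $\mathbb{R}$-trees---so that the single matching condition along $\partial U$ can be accounted for in the dimension count.

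First I would invoke the two building-block parametrizations. Restricting $F$ to the complementary surface-with-boundary $S \setminus U$ (genus $g$, one boundary component) gives, by Proposition \ref{mfgb} with $b=1$, a measured foliation without poles lying in a space homeomorphic to $\mathbb{R}^{6g-6+3} = \mathbb{R}^{6g-3}$. Restricting $F$ to $U$ gives a model foliation; I would record it together with the marked point on $\partial U$ needed for the gluing, so that by Corollary \ref{hatpn} the relevant space is the pointed model space $\widehat{\mathcal{P}}_n \cong \mathbb{R}^{n-1}$. Using $\widehat{\mathcal{P}}_n$ rather than $\mathcal{P}_n$ is the right choice, since the circle's worth of rotational freedom in attaching $U$ to $S \setminus U$ (present exactly when $\partial U$ carries positive transverse measure) has been folded into the disk data.

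Next I would carry out the reassembly on the level of dual trees. Lifting to the universal cover, let $T$ be the leaf-space of the lift of $F|_{S \setminus U}$, and let the leaf-spaces $T_U$ of the lifts of $F|_U$ be as described in Proposition \ref{prop:pk}. I would glue the copies of $T_U$ onto $T$ equivariantly along the lifts of $\partial U$, respecting the cyclic order of the infinite prongs induced by the surface orientation; this produces a metric $\mathbb{R}$-tree carrying a small $\pi_1(S)$-action, which by the remark following Definition \ref{mf-pole} determines the measure-equivalence class of a foliation in $\mathcal{MF}(n)$. Because $\partial U$ is a single simple closed curve, the only constraint for the gluing to be consistent is that its transverse measure agree when computed from the $U$-side and from the $(S \setminus U)$-side. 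Thus $\mathcal{MF}(n)$ is the fiber product of $\widehat{\mathcal{P}}_n$ and $\mathcal{MF}_{g,1}$ over the common transverse-measure coordinate of $\partial U$, and the dimension count reads $(n-1) + (6g-3) - 1 = 6g - 6 + n + 1$, as claimed. Cutting along $\partial U$ provides the inverse construction, and each constituent parametrization being a homeomorphism upgrades the bijection to a homeomorphism once continuity of the fiber-product identification is verified.

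The step I expect to be the main obstacle is making this fiber product a genuine manifold chart uniformly across the combinatorial stratification. Concretely, one must reconcile the two descriptions of the transverse measure of $\partial U$: on the disk side it is the nonnegative factor $\mathbb{R}_{\geq 0}$ of Proposition \ref{prop:pk} (promoted to the $S^1$-family of $\widehat{\mathcal{P}}_n$ when positive), whereas on the complement side the boundary parameter $i(B) \in \mathbb{R}$ of Proposition \ref{mfgb} also records the negative regime in which a foliated cylinder runs parallel to $\partial U$. Checking that the positive-measure stratum (with its genuine twist circle) and the zero-measure stratum (where the twist collapses and $U$ is attached at a single point of the tree) fit together into one chart of the asserted dimension---and that Whitehead moves and isotopies straddling $\partial U$ introduce no further identifications---is precisely the bookkeeping that the dual-tree formulation was introduced to control, and is where the care in the argument must be concentrated.
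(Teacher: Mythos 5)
Your proposal is correct and follows essentially the same route as the paper: cut along $\partial U$, parametrize the complement via Proposition \ref{mfgb} and the disk via the pointed space $\widehat{\mathcal{P}}_n$ of Corollary \ref{hatpn}, reassemble by equivariantly identifying the dual trees along the lifts of $\partial U$ (using the marked point to kill the circle's ambiguity), and subtract one for the matching transverse measure. The bookkeeping issue you flag at the end --- reconciling the $\mathbb{R}_{\geq 0}$ boundary parameter on the disk side with the signed parameter $i(B)\in\mathbb{R}$ on the complement side across the positive- and zero-measure strata --- is a genuine subtlety that the paper's own proof also leaves largely implicit, so raising it is a point in your favor rather than a deviation.
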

\begin{proof}
Fix a disk $U$ about the singularity. We shall now combine a measured foliation on $S\setminus U$ and a foliation $F_U \in \widehat{\mathcal{P}}_n$ (\textit{cf.} Corollary \ref{hatpn}) to produce a measured foliation in $\mathcal{MF}(n)$. 

The process of combining the foliation $F\in \mathcal{MF}_{g,b}$ on $S\setminus U$ and the model foliation $F_U$ on $U$ can be described as an equivariant identification of their dual $\mathbb{R}$-trees (in their lifts to the universal cover). We now describe this in detail. 

For the surface with boundary $S\setminus U$, let $T_0$ be the $\mathbb{R}$-tree that is the leaf-space of the lifted foliation $\widetilde{F}$ on $\widetilde{S\setminus U}$, and let
\begin{equation*}
\pi: \widetilde{S\setminus U} \to T_0
\end{equation*}
be the projection map that takes each leaf of the lifted foliation $\widetilde{F}$ to a unique point in $T_0$, which is equivariant with respect to an action of $\pi_1(S\setminus U)$.

In particular, the circle $\partial U$ lifts to an equivariant collection of real lines $L_\alpha$ (for $\alpha$ in some index set $I$) in the universal cover $\widetilde{S\setminus U}$. Each $L_\alpha$ maps into the tree $T_0$ injectively as an embedded  axis that we denote by $l_\alpha$ in the case when $\partial U$ has positive transverse measure, and to a point that we denote by $q_\alpha$ when $\partial U$ has zero transverse measure.

 By the equivariance of the above projection map $\pi$, in the case when the transverse measure is positive, the infinite cyclic group generated by the boundary circle $\langle \partial U \rangle < \pi_1(S \setminus U)$  acts on each such line $l_\alpha$ by a translation with non-trivial translation distance equal to the transverse measure. In the case when the transverse measure of the boundary is zero, the action of this $\mathbb{Z}$-subgroup fixes the vertex $q_\alpha$.

We may restrict the pointed foliation $F_U \in \widehat{\mathcal{P}}_n$ on $U$ to the punctured disk $U \setminus p$. On the universal cover $\widetilde{U \setminus p}$ of $U \setminus p$, the lifted foliation has a leaf space that is a metric tree $T_U$, and there is a projection $\pi_U$ equivariant under the action of $\pi_1(U \setminus p) =  \mathbb{Z}$. This time the boundary $\partial U$ lifts to a single line $L$ in $\widetilde{U \setminus p}$. In the case the boundary has positive transverse measure, this embeds in the tree $T_U$ as a bi-infinite axis $l_U$, and the equivariance provides a $\mathbb{Z}$-action corresponding to the infinite cyclic subgroup generated by $\langle \partial U\rangle = \pi_1(U\setminus p)$ that acts by translation along $l_U$. In the case the boundary has zero transverse measure, the image of $L$ is a single vertex $p\in T_U$, which is fixed by the $\mathbb{Z}$-action.  (See Figure 7.) 

\begin{figure}
  \centering
  \includegraphics[scale=0.63]{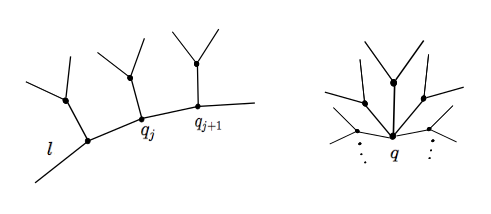}\\
  \caption{The leaf-space $T_U$ in the case when $\partial U$ has positive transverse measure (figure on left), and in the case when $\partial U$ has zero transverse measure (figure on right).  In the former case the $\mathbb{Z}$-action is by translation along a bi-infinite axis $l$, and in the latter case, the $\mathbb{Z}$-action fixes the point $q$.}
  
    \end{figure}

We now describe the procedure of identifying the metric trees $T_0$ and $T_U$ along the axes (or points) corresponding to the image of the lifts of $\partial U$ under the projections to the leaf-space.

This identification can only be done when the transverse measures of $\partial U$ for the measured foliations on $S\setminus U$ and $U$ are identical.

In the case of positive translation measure, each bi-infinite axis $l_\alpha$ in $T_0$ is identified with the axis $l$ in a copy of $T_U$, such that the $\mathbb{Z}$-actions, that acts by translations by the same distance on the axes, agree. 

Typically there is a circle's worth of ambiguity in this identification; this is where the fact that the foliation $F_U$ is \textit{pointed} is helpful; in particular, it provides a distinguished point $q$ on the circle $\partial U$.

Consider the orbit $\{ q_{j, \alpha} \vert j \in \mathbb{Z}\}$ of the action on each $l_\alpha \subset T_0$ that are lifts of this point $q\in \partial U  = \partial (S\setminus U)$. Similarly, there is an orbit $\{ q_{j} \vert j \in \mathbb{Z}\}$ on $l\subset T_U$. 

We identify the real segment $[q_{j,\alpha},q_{j+1, \alpha}]$ on $l_\alpha$ with the segment  $[q_j,q_{j+1}]$ on $l$, for each $j$ and $\alpha$. 

In the case the translation distance is zero, for each $\alpha$, the  point $q_\alpha$ in $T_0$, fixed by the $\mathbb{Z}$-action, is identified with the point $q$ in  $T_U$. In this case there are infinitely many copies of the fundamental domain of the $\mathbb{Z}$-action on $T_U$, indexed by by $j\in \mathbb{Z}$, attached to the vertex $q$ (as in Figure 7.)

We obtain a real tree $T$  as the resulting identification space: removing a point from the identified axis or point disconnects each of the trees which we are identifying, hence the identified space, so that space is a tree.

Note that the action of $\mathbb{Z}$ extends to the resulting $T$: in the case of positive transverse measure the translations match along the identified axes, and in the case of zero transverse measure,   the generator of $\mathbb{Z}$ acts on the attached copy of $T_U$ by the $\mathbb{Z}$-action that fixes the vertex $q$, and  takes the $j$-th copy of the fundamental domain to the $(j+1)$-th copy, for each $j\in \mathbb{Z}$. 

In either case, the tree $T$ admits an isometric action of  $\pi_1(S \setminus U) = \pi_1(S\setminus p)$; 
and is  the leaf space of a measured foliation $\tilde{F}$ on $\widetilde{S \setminus p}$ that descends to a foliation $F$ on $S$ with a pole-singularity at $p$.

This foliation $F$ can be thought of as obtained by identifying the foliations on $S\setminus U$ and $U$ along their common boundary $\partial U$. (Note that the leaves of either foliation can be assumed to be either all orthogonal to $\partial U$ or all parallel to it, in the case the boundary has zero transverse measure.) 
Since the transverse measures of arcs are determined by the metric tree,  the equivalence class of such a measured foliation is uniquely determined (see the remark following Definition \ref{mf-pole}).

Conversely, given a suitable choice of disk $U$, any foliation in $\mathcal{MF}(n)$ is isotopic to one such that it restricts both to a model foliation in $\mathcal{P}_n$ on $U$, and to a measured foliation on the surface-with-boundary $S\setminus U$.

Let us now identify the parameter space arising from the constructions of these foliations.  By Proposition \ref{mfgb} the space of measured foliations on $S\setminus U$ is homeomorphic to $\mathbb{R}^{6g-6+3}$, 
and by Corollary \ref{hatpn} the space of possible pointed measured foliations on $U$ is homeomorphic to $\mathbb{R}^{n-1}$.
 Recall that one parameter, namely the transverse measure of $\partial U$, must be identical for both measured foliations, and hence the combined parameter space is $\mathbb{R}^{6g-6+n+1}$, as required. \end{proof}

\textit{Example.}  A special class of foliations are those of \textit{half-plane differentials} defined in \cite{GW1}; such a foliation has a connected critical graph that forms a spine of the punctured surface, and whose complement comprises foliated half-planes. Since any essential simple closed curve is homotopic to the critical spine, their transverse measures all vanish; so do the ``local parameters" as in Definition \ref{mf-pole} since there are no strips.

\section{Proof of Theorem \ref{thm1}}

We shall use the technique of harmonic maps to $\mathbb{R}$-trees as introduced in \S2.4.
For ease of notation, we shall deal with the case of a single pole $p$, that is, the cardinality of the set $\mathcal{P}$ of marked points on the closed Riemann surface $\Sigma$  is one; the arguments extend \textit{mutatis mutandis} to the case of several poles.\\

Let $\widetilde{X}$ denote the universal cover of $X= \Sigma \setminus p$, and let $ \widetilde{F}$ be the lift of the measured foliation $[F] \in \mathcal{MF}(n)$ to this universal cover. 
Let $T$ be the $\mathbb{R}$-tree that is the leaf-space of $\widetilde{F}$.

The collapsing map of the foliation $\widetilde{F}$ defines an equivariant map from $\widetilde{X}$ to $T$ that  is harmonic. Conversely, given such an equivariant harmonic map, its Hopf differential (see \S2.4) descends to a holomorphic quadratic differential on $X$.  

The strategy of proof shall be to prove existence and uniqueness statements for such maps, with prescribed asymptotic behaviour at the pole.

\section*{Uniqueness}

\begin{prop}\label{unq} Let $q_1$ and $q_2$ be meromorphic quadratic differentials on $\Sigma$ with a pole of order $n$ at $p$.  If they induce the same measured foliation $[F] \in \mathcal{MF}(n)$, and have identical principal parts with respect to the (already) chosen coordinate $U$ around $p$, then $q_1=q_2$.
\end{prop}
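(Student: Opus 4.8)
The plan is to compare the two collapsing maps attached to $q_1$ and $q_2$ and to prove that they must coincide, after which the equality $q_1=q_2$ follows by taking Hopf differentials. Since $q_1$ and $q_2$ both induce horizontal foliations that are measure-equivalent to the fixed $F\in\mathcal{MF}(n)$, their leaf-spaces in the universal cover are canonically identified with the single $\mathbb{R}$-tree $T$ dual to $F$, carrying the same isometric $\pi_1(X)$-action (see the remark following Definition \ref{mf-pole}). This produces two equivariant harmonic maps $h_1,h_2\colon\widetilde{X}\to T$ to a common target, with $\Hopf(h_j)=q_j$ for $j=1,2$. The whole argument is the rigorous globalization of the naive local statement that two natural parameters $\int\sqrt{q_1}$ and $\int\sqrt{q_2}$ with the same level sets must differ by a real constant.

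First I would introduce the function $\delta(x)=d_T\bigl(h_1(x),h_2(x)\bigr)$. By equivariance it is $\pi_1(X)$-invariant and hence descends to a function on $X=\Sigma\setminus\{p\}$. Because $T$ is NPC, the distance between two harmonic maps into $T$ is subharmonic (the two-map analogue of Lemma \ref{dist}, following \cite{KorSch1}), so $\delta$ is subharmonic on $X$. The goal is then to show $\delta\equiv 0$.

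Second, I would analyze $\delta$ in a punctured neighborhood of $p$; this is where the hypothesis of equal principal parts is used. Writing $\sqrt{q_j}$ in the forms \eqref{princ1}--\eqref{princ2}, equality of the principal parts $P$ means that $\sqrt{q_1}-\sqrt{q_2}$ is holomorphic at $p$, so the singular part of the induced flat geometry, namely the arrangement of half-planes and strips in the sink neighborhood of Definition \ref{vp}, is common to $q_1$ and $q_2$. Using the description of the restricted maps near the pole as branched covers of a harmonic function determined by the principal part, I would conclude that $h_1$ and $h_2$ agree to leading order near $p$ and that $\delta$ stays bounded as $x\to p$. Since a bounded subharmonic function with an isolated singularity extends subharmonically across that point, $\delta$ extends to a subharmonic function on the compact surface $\Sigma$, which the maximum principle forces to be constant, say $\delta\equiv c$. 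To finish, I would rule out $c>0$: the equality case in the subharmonicity of $\delta$ would produce a flat strip between the images of $h_1$ and $h_2$, and an $\mathbb{R}$-tree contains no such strip; hence $c=0$, giving $h_1=h_2$ and therefore $q_1=\Hopf(h_1)=\Hopf(h_2)=q_2$.

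The step I expect to be the main obstacle is the local control at the pole, namely showing that equality of the principal parts forces $\delta$ to remain bounded as $x\to p$. The difficulty is that $p$ lies at infinite distance in the flat metric and the maps have infinite energy, so boundedness of $\delta$ cannot be read off from a naive estimate on the bounded holomorphic remainder $\sqrt{q_1}-\sqrt{q_2}$; it requires the explicit branched-cover model of the collapsing maps in the sink neighborhood, so that the contributions of the common singular part cancel and only a controlled error survives. The concluding dichotomy on the constant $c$ is comparatively soft, relying only on the absence of flat strips in an $\mathbb{R}$-tree.
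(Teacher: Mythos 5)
Your proposal follows the paper's proof almost step for step: both arguments form the distance function $\delta(x)=d_T\bigl(h_1(x),h_2(x)\bigr)$ between the two collapsing maps into the common dual tree $T$, use the equality of principal parts to bound $\delta$ near the pole, invoke subharmonicity of $\delta$ together with the parabolicity of the punctured surface (your variant --- extending the bounded subharmonic function across the puncture and applying the maximum principle on the compact surface --- is equivalent) to conclude $\delta\equiv c$, and then must rule out $c>0$. The one genuine divergence is in that last step. The paper uses the argument it borrows from \cite{DaDoWen}: a horizontal arc of $q_1$ is collapsed by $h_1$ to a point, so its $h_2$-image lies in a sphere of radius $c$ in $T$; spheres in a tree are discrete, so the arc is also horizontal for $q_2$; matching the branching of level sets at zeroes shows the zeroes coincide, whence $q_1$ and $q_2$ are constant multiples of one another and, having equal principal parts, are equal. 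Your alternative --- that the equality case of subharmonicity forces a flat strip, which an $\mathbb{R}$-tree cannot contain --- is correct in spirit and is a standard rigidity mechanism in the Korevaar--Schoen/Mese theory, but as stated it is the one unproved ingredient of your argument: extracting flat quadrilaterals from constancy of $\delta$ requires analyzing the equality case of the NPC quadrilateral inequality, and the setting in which that analysis is usually carried out (two minimizers of the same finite-energy problem) does not literally hold for these infinite-energy equivariant maps. The paper's discrete-sphere argument is more elementary and self-contained, and you should either adopt it or supply the equality-case lemma you are invoking.

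Two smaller points. First, your assertion that equal principal parts make $\sqrt{q_1}-\sqrt{q_2}$ holomorphic at $p$ is correct only for $n$ even; for $n$ odd the difference is $z^{-1/2}\bigl(g_1(z)-g_2(z)\bigr)$, which is not holomorphic but is integrable, and integrability is all the bound on $\delta$ requires. Second, you overestimate the difficulty of bounding $\delta$ near the pole: the paper does not need the branched-cover model there, only the observation that one may choose a single arc $\gamma(x)$ into the sink neighborhood that is simultaneously transverse to both foliations, so that the two transverse-measure integrals combine into $\int_{\gamma(x)}\left\vert\Im(\sqrt{q_1}-\sqrt{q_2})\right\vert$, which is $O(1)$ by the remainder estimate above. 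Your instinct that the contributions of the common singular part must cancel is exactly what this combination over a common arc accomplishes.
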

\begin{proof}
In the universal cover, these two quadratic differentials determine  two harmonic collapsing maps ${h_1}$ and ${h_2}$ from $\widetilde{X}$ to the same $\mathbb{R}$-tree $T$.  Recall that distances in the tree correspond to transverse measures of arcs by (\ref{trans}). Let $\gamma_i$ be a path in $\widetilde{X}$ from some basepoint $x_0$ to $x$, that is transverse to the horizontal foliation $\widetilde{F_i}$ for $q_i$. Then

\begin{equation}\label{diff}
d_T({h_1}(x), {h_2}(x)) = \left\vert \displaystyle\int\limits_{\gamma_1} \left\vert\Im(\sqrt {q_1}\right\vert - \int\limits_{\gamma_2}\left\vert\Im\sqrt{q_2})\right\vert \right\vert   
\end{equation}

If the principal parts for $q_1$ and $q_2$ coincide, then examining the expressions in (\ref{princ1}) and (\ref{princ2}) we can conclude that in a neighborhood of the pole,  we have:
\[ 
  \sqrt {q_1} - \sqrt{q_2} =  
\left\{
\!
\begin{aligned}
 g_1(z) - g_2(z) & \text{  if  } n \text{ is even}\\
 z^{-1/2}(g_1(z) - g_2(z))  & \text{  if  } n \text{ is odd}
\end{aligned}
\right.
\] \label{eqn:local diffs near}
where $g_1$ and $g_2$ are  bounded holomorphic functions in such a neighborhood.

There are two consequences of these expressions.  First, note that there is a neighborhood $U$ of the pole so that, for $x \in U$, we may find a common arc $\gamma$ that runs from a point on $\partial U$ to $x \in U$ that is simultaneously transverse to the horizontal foliation $\widetilde{F_1}$ for $q_1$ and to the horizontal foliation $\widetilde{F_2}$ for $q_2$.  Thus choosing $\gamma_1$ and $\gamma_2$ to contain such a common vertical arc $\gamma(x)$ in the neighborhood $U$, we may write 

\begin{align}\label{eqn:diff2}
d_T({h_1}(x), {h_2}(x)) &= \left\vert \displaystyle\int\limits_{\gamma_1} \left\vert\Im(\sqrt {q_1})\right\vert - \int\limits_{\gamma_2}\left\vert\Im(\sqrt{q_2})\right\vert \right\vert \notag \\
&=C_0 + \left\vert \displaystyle\int\limits_{\gamma(x)} \left\vert\Im(\sqrt {q_1})\right\vert - \int\limits_{\gamma(x)}\left\vert\Im(\sqrt{q_2})\right\vert \right\vert \notag \\
&= C_0 +  \displaystyle\int\limits_{\gamma(x)} \left\vert \Im(\sqrt {q_1} - \sqrt{q_2}) \right\vert    
\end{align}
where $C_0$ bounds the expressions from \eqref{diff} from the portion of the curves $\gamma_1$ and $\gamma_2$ in the complement of $U$; the second equality follows after combining the integrals over the common domain curve $\gamma(x)$ near which we can simultaneously orient both foliations $\widetilde{F_1}$ and $\widetilde{F_2}$.

The second consequence of the principal parts being identical and the expression \eqref{eqn:local diffs near} holding in a neighborhood $U$ is
that the (restricted) integral in \eqref{eqn:diff2} over $\gamma$ is bounded. In the case that $n$ is even, this is immediate as the integrand is bounded (by a constant $C$), and when $n$ is odd, the integrand is integrable since in the neighborhood we have:
\begin{equation*}
\displaystyle\int\limits_{\gamma(x)} \left\vert \Im(\sqrt {q_1} - \sqrt{q_2}) \right\vert \leq \left\vert \displaystyle\int\limits_\gamma  \frac{C}{z^{1/2}} dz \right\vert   = O(1) 
\end{equation*}
We conclude from this estimate and \eqref{eqn:diff2} that $d_T({h_1}(x), {h_2}(x))$ is bounded independently  of $x$.

Since this distance function is also subharmonic (see Lemma \ref{dist}), and a punctured Riemann surface is parabolic (in the potential-theoretic sense), then we see that the distance function is constant. An argument identical to the one in the proof of Proposition 3.1 in \cite{DaDoWen} then shows that this constant is in fact zero. We briefly recount the argument of theirs:  suppose $x\in \mathbb{H}^2$ is a zero of $\tilde{q}_1$ with a neighborhood $\Delta$ that does not include any zeroes of $\tilde{q}_2$, except perhaps $x$ itself. Then the collapsing map ${h_1}$ is constant on a horizontal arc $e \subset \Delta$ that has an endpoint $x$. Since the image of the arc ${h_2}(e)$ is at a constant distance  from ${h_1}$, and a sphere in a tree is discrete, the image must be a point. Hence $e$ is also horizontal for the quadratic differential $\tilde{q}_2$; since this is true for any choice of $e$, and the level set $h_1^{-1}(h_1(e))$ branches at $x$, then the level set $h_2^{-1}(h_2(e))$ also branches at $x$. This implies that $x$ is also a zero of $q_2$. Thus, the zeroes of $\tilde{q}_1$ and $\tilde{q}_2$ coincide, and since it is a basic consequence of the agreement of the principal parts that the poles have the same order, we see that $q_1$ and $q_2$ are constant multiples of each other. Since the principal parts are identical, the two differentials are in fact equal. 
\end{proof}

\section*{Existence}

In this section we prove that there exists a meromorphic quadratic differential on $\Sigma$ with a pole of order $n$ at $p$ that induces a given class of a measured foliation $[F] \in \mathcal{MF}(n)$, and has any prescribed principal part $P$, as long as $P$ is compatible with $F$. (For the notion of compatibility, see Definition~\ref{compat}.)  \\

Choose a representative foliation $F^\prime \in [F]$ such that $F^\prime \vert_U$ is identical to the foliation induced by $P^2dz^2$ on $U$. By a suitable isotopy of the foliation, we can further assume that  $F^\prime \vert_U$ is a ``model foliation" in $\mathcal{P}_n$ (see Definition \ref{defn:pn}). \\

Recall that $X = \Sigma \setminus p$.
Lifting to the universal cover, the collapsing map for the foliation $\widetilde{F^\prime}$ defines an equivariant map 
\begin{equation}\label{coll-phi}
\phi:\widetilde{X} \to T
\end{equation}
such that on any lift $\widetilde{U\setminus p}$, the map  $\phi$ coincides with the (lift of the) collapsing map for the meromorphic quadratic differential $P^2dz^2$.
 (See \S2.2 for the notion of a collapsing map.)  \\

Note that since the foliation $F^\prime$ on $U$ is in the space of model foliations $\mathcal{P}_n$,  the leaf space of the lifted foliation $\widetilde{F^\prime}$ when restricted to any lift  $\widetilde{U\setminus p}$ is a tree $T_U$ with a $\mathbb{Z}$-action by isometries, as described in \S3.2. 

The collapsing map $\phi$ restricted to any such lift thus maps to the subtree $T_U$. \\

By a method of exhaustion and taking limits, we shall produce a $\pi_1(X)$-equivariant  harmonic map 
\begin{center}
$h:\widetilde{X} \to T$ 
\end{center}
such that the distance between the two maps $h$ and $\phi$  is asymptotically bounded. \\

We begin by considering an exhaustion 
\begin{equation}\label{exh}
X_0 \subset X_1 \subset X_2 \subset \cdots 
\end{equation}
of the punctured surface $X$ by compact subsurfaces with boundary (a more particular exhaustion shall be chosen in \S4.2).

\subsection{Harmonic map from surface-with-boundary} \label{sec: Harmonic map from surface-with-boundary} For each $i\geq 1$, let $T_i$ be the truncation of $T$ that is the leaf-space $F^\prime$ restricted to the lift of the subsurface $X_i$; here we imagine $T_i$ as a truncation of the tree $T$. 
For each $i$, we first solve the Dirichlet problem of finding an equivariant harmonic map $h_i:\widetilde{X_i} \to T_i$ from $\widetilde{X_i}$ to $T_i$  that:

(I)  agrees with the map $\phi$ on the lifts of $\partial X_i$ and

(II)  collapses along a foliation that is measure-equivalent to the restriction of $\widetilde{F^\prime}$ on $\widetilde{X_i}$ . \\

 This harmonic map $h_i$ is obtained by:\\

(a)  Taking a limit of a sequence of maps $h_i^m$ (where $m\geq 1$) from  $\widetilde{X_i}$ to $T_i$ with the prescribed boundary condition (I) on   $\phi\vert_{\widetilde{\partial X_i}}$ whose energy tends to the infimum of energies for all such maps. The subsurface being compact provides an energy bound, and Lemma~\ref{cour} guarantees equicontinuity.

We wish to then apply the Ascoli-Arzela theorem: see the remark just after Lemma~\ref{cour}. To do so, we need, for each point $x \in \tilde{X_i}$, to trap the images $h_i^m(x)$ in a compact set within the non-locally compact tree. Here we cite the proof, in the case of a closed surface found in \cite{Wolf2}, Lemmas~3.3 and 3.4; an identical argument works for a compact surface with boundary (as in the present case).

A brief summary of that proof is the following: Elements of the fundamental group $\pi_1(S)$ act on the tree by isometries, translating along different isometric copies of $\R$, called axes. The equicontinuity of the maps $h_i^m$, together with the fact that the axes of elements only meet along compact subintervals before diverging (reflecting the NPC nature of the tree), result in a uniform bound on the distance between the maps $h_i^m$ and $h_i^M$ for any $M>m$.

We then look at the image $h_i^m(\tilde{B})$ of the lift $\tilde{B} \subset \tilde{X_i}$ of a non-trivial closed curve $B$ on $X_i$. This image must meet the axis of the isometry represented by $[B] \in \pi_1(S)$ on the tree $T$. By the uniform distance bound we eventually find a point, say $x \in \tilde{X_i}$, whose image $h_i^m(x)$ under $h_i^m$ converges.

The rest of the argument slowly leverages the existence of simpler sets of points whose images converge or lie in a compact set into larger sets of points with those properties. Start from that one point $x \in \tilde{X_i}$ and then connect that point $x$ with itself on $\tilde{X_i}$ along the lift of a closed curve $B$ that goes through a singularity $z_i^m$ of the lifted  foliation: we then trap the images $h_i^m(z_i^m)$ of that singularity $z_i^m$ into a segment along an axis corresponding to $B$, eventually finding that limit points $z_i$ of singularities have the boundedness property we need.  This argument is then promoted to arcs between such limiting singular points $z_i$ and then finally to cells that these arcs bound.  

The only comment needed to explicitly adapt this proof to the present setting is to note that the boundary arcs $\partial X_i$ already have images lying in a compact set of the tree $T$, as their images were fixed by the hypotheses of the problem.

(b) It remains to check that the resulting harmonic map $h_i$ to $T$  has a Hopf differential with a foliation $\mathcal{F}_i$ that is measure equivalent to the one desired. Let $T^\prime_i$ be the leaf-space of the foliation $\mathcal{F}_i$. It is well-known (see, for example, Proposition 2.4 of \cite{DaDoWen}) that there is a morphism of $\mathbb{R}$-trees $\Psi_i: T^\prime _i \to T_i$ such that the harmonic map $h_i:\widetilde{X_i} \to T_i$ factors through the collapsing map $c_i$ of the foliation $\mathcal{F}_i$, that is, we have $h_i =  \Psi_i \circ c_i$. Moreover, the morphism $\Psi_i$ is $\pi_1(X)$-equivariant where the surface-group action on $T_i$ (and $T^\prime_i$) is \textit{small}, that is, stabilizers of arcs are at most infinite-cyclic. However the analogue of Skora's theorem (\cite{Skora}, see also \cite{FarWol}) for a surface-with-punctures then asserts that any such morphism must in fact be an isometry (\textit{i.e.} there is no ``folding").

These steps (a) and (b) complete the proof of the existence of an equivariant harmonic map $h_i$ satisfying (I) and (II) above. 

For later use, we note the uniqueness of the solution of this equivariant Dirichlet problem.

\begin{prop} \label{prop: h-i unique}
	The solution $h_i$ to the equivariant Dirichlet problem (I) and (II) is unique.
\end{prop}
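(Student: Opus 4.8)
The plan is to argue by the standard maximum principle for harmonic maps into non-positively curved (NPC) targets, using only the Dirichlet boundary condition (I); condition (II) will play no role in the uniqueness. Suppose $h_i$ and $h_i'$ are both equivariant harmonic maps $\widetilde{X_i} \to T_i$ satisfying (I) and (II). Since $T_i$ is a truncation of the $\mathbb{R}$-tree $T$, it is a convex subtree and hence itself an NPC (indeed CAT(0)) space; consequently the product $T_i \times T_i$, equipped with the product metric, is again NPC, and the distance function $d: T_i \times T_i \to \mathbb{R}$ is convex along geodesics.

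First I would observe that the product map $(h_i, h_i'): \widetilde{X_i} \to T_i \times T_i$ is harmonic into the NPC target $T_i \times T_i$, being the product of two harmonic maps (energy decomposes over the factors, so criticality is inherited componentwise). By the characterization of harmonic maps into NPC spaces recalled in \S2.4 --- that germs of convex functions pull back to germs of subharmonic functions (see also \cite{KorSch1}, \cite{KorSch2}) --- the composition
\begin{equation*}
f(x) = d_{T_i}\big(h_i(x),\, h_i'(x)\big)
\end{equation*}
is subharmonic on $\widetilde{X_i}$. Because $X_i$ is compact with boundary, both $h_i$ and $h_i'$ have finite energy, so this theory applies cleanly here, without the infinite-energy complications encountered elsewhere in the paper.

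Next I would use equivariance to pass to the quotient. Since $h_i$ and $h_i'$ are equivariant with respect to the same isometric action of the deck group $\pi_1(X_i)$ on $T_i$, and the group acts by isometries, $f(\gamma x) = f(x)$ for every $\gamma$; thus $f$ descends to a well-defined subharmonic function $\bar f$ on the compact surface-with-boundary $X_i$. Condition (I) forces both maps to agree with $\phi$ on the lifts of $\partial X_i$, so $\bar f \equiv 0$ on $\partial X_i$. As $\bar f \geq 0$ and is subharmonic on the compact surface $X_i$, the maximum principle gives $\bar f \leq \max_{\partial X_i} \bar f = 0$, whence $\bar f \equiv 0$ and therefore $h_i = h_i'$.

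The main obstacle is the subharmonicity assertion of the first step: one must verify that $f$ is genuinely subharmonic (in the distributional sense adequate for the maximum principle) even though the target is the \emph{singular} space $T_i$ rather than a smooth manifold, and even across the branch points of $T_i$ and the zeros of the Hopf differentials where the maps fail to be smooth. This is precisely the point at which the theory of harmonic maps into NPC spaces --- the convexity of distance on the CAT(0) product $T_i \times T_i$ together with the pullback property for convex functions --- must be invoked in place of classical smooth harmonic map theory. Once subharmonicity is established, the descent via equivariance and the application of the maximum principle are routine.
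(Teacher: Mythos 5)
Your proof is correct, but it takes a different route from the paper's. The paper argues in the style of Schoen and Mese: it constructs the equivariant geodesic homotopy between $h_i$ and $h_i'$, notes that the (finite) equivariant energy is convex along this homotopy and critical at both endpoints, hence constant, and then reads off from the energy integrand that the two maps differ by a translation along an axis of $T_i$ --- which the shared boundary values on the lifts of $\partial X_i$ then force to be trivial. You instead take the distance function $f(x)=d_{T_i}(h_i(x),h_i'(x))$, establish its subharmonicity via the NPC product $T_i\times T_i$, descend to $X_i$ by equivariance, and apply the maximum principle with the zero boundary data from condition (I). Both arguments are standard in the harmonic-maps-to-NPC-spaces literature, and the subharmonicity fact you rely on is one the paper itself invokes (in the proof of Proposition~\ref{unq}, for the global uniqueness of the differentials), so your toolkit is consistent with theirs. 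Your version is somewhat more direct for this bounded Dirichlet setting, since it never needs the intermediate conclusion that the maps differ by an isometry; the paper's energy-convexity argument is the one that survives in boundaryless or free-boundary situations, where one must actually analyze the residual translation. You are also right that condition (II) is not needed for uniqueness --- the paper's proof likewise uses only the boundary agreement, with (II) entering implicitly through the prescribed equivariant homotopy class, which is automatic here since the target tree is contractible.
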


\begin{proof}
	The style of argument is standard (\cite{Schoen}, see also \cite{Mese}):  note that in the prescribed homotopy class, we may construct an equivariant homotopy along geodesics connecting the pair of image points $h_i(p)$ and $h_i'(p)$, and along that homotopy, the (equivariant) energy is finite, convex and critical at the endpoints. We conclude that the energy is then constant along this homotopy and, by examining the integrand for the energy functional, that the maps must differ by an isometry that translates along an axis of the target tree.  However, since the maps agree on the lifts of the boundary, $\partial X_i$, the maps in fact agree.  
\end{proof}

\subsection{Taking a limit}

To show that the family $\{h_i\}$ subconverges to a harmonic map $h:\tilde{X} \to T$, we need to prove a uniform bound on energy of the restrictions to a lift of any compact subsurface.  For this, we shall follow a strategy similar to that in our previous paper \cite{GW1}.
The extension of that argument depends crucially on our ability to choose an exhaustion of the surface that we now describe. 

\subsection*{A symmetric exhaustion}
We shall choose a specific exhaustion of the punctured surface. In the neighborhood $U$ about the puncture, this relies on the \textit{normal forms} (\ref{nform1}) and (\ref{nform2})  for a meromorphic quadratic differential with a pole of order $n$.

We consider the exhaustion of a neighborhood of the puncture by concentric disks in the ``normal" coordinate $z$ of (\ref{nform1}) or (\ref{nform2}). Namely, we have
\begin{equation}\label{exhpole}
U_i = \{ z \in \mathbb{D}^\ast \text{ } \vert \text{ } \lvert z \rvert < \delta/i\} 
\end{equation}
for each $i\geq 1$,  where $\delta>0$ is chosen small enough such that $U_1$ (and hence all subsequent $U_i$) lie in the coordinate disk $U$ chosen earlier.

We thus obtain an exhaustion 
\begin{equation*}
 X \setminus U = X_0 \subset X_1 \subset X_2 \subset  \cdots 
\end{equation*}
where  $X_i = X \setminus U_i$ for each $i\geq 1$.\\

\subsection*{Consequences of symmetry}

Recall that the collapsing map $\phi$ along leaves of the measured foliation for $q$ produces a $\mathbb{Z}$-equivariant map from the universal cover of $U \setminus p$ to the dual tree $T_U$, which is the leaf-space of the lift of the foliation $\mathcal{F}^\prime \vert_U \in \mathcal{P}_n$ .  (See Figure 8.) 

Denote the annulus $A_i = X_{i+1} \setminus X_{1}$.

Since the foliation on $A_i$ for $i\geq 1$ is a restriction of the model foliation in $\mathcal{P}_n$,  the image of the restriction  $\phi_i = \phi\vert_{\tilde{A}_i}$ is the tree $T_U$ (see \S3.1). 

Passing to the quotient by the $\mathbb{Z}$-action, we obtain a map 
\begin{equation*}
\bar{\phi_i} :A_i \to \overline{T}_U 
\end{equation*}

where $\overline{T}_U$ is a graph with single cycle (the quotient of $T_U$ by the $\mathbb{Z}$-action) - see, for example, the left figure in Figure 4. \\

An immediate consequence of the preceding construction of an exhaustion  is:

\begin{lem} When $A_i$ is uniformized to a round annulus, the map $\bar{\phi}_i$ above has an $n$-fold rotational symmetry when $n$ is odd, and an $n/2$-fold rotational symmetry, when $n$ is even.
\end{lem}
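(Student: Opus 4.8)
The plan is to exploit the one feature that the symmetric exhaustion buys us: in the normal coordinate $z$ of \eqref{nform1} or \eqref{nform2}, each $U_i=\{\,|z|<\delta/i\,\}$ is a round disk, so $A_i=X_{i+1}\setminus X_1$ is a genuinely round annulus and every rotation $z\mapsto e^{i\alpha}z$ is a conformal automorphism of it. Since $\mathcal{F}'\vert_U$ is, by construction, the foliation of the normal form, all of $\bar{\phi}_i$ is determined by that one explicit differential, and my first step would be to express $\bar{\phi}_i$ through the natural (flat) coordinate $w=\int\sqrt q$, in which the leaves are the horizontal lines $\{\Im w=\text{const}\}$ and the collapse is simply $w\mapsto\Im w$ read into the prongs of $\overline{T}_U$. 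The task then reduces to identifying the largest cyclic group of rotations of the round annulus whose induced action on $w$ is by maps of the form $w\mapsto\pm w+c$, since exactly these descend to isometries of the leaf space.

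The determination of the order is then a direct computation with the leading term $\sqrt q\sim z^{-n/2}dz$, and the even/odd dichotomy should fall out of the monodromy of this square root around the pole. When $n$ is even, $z^{-n/2}$ is single valued and $\sqrt q$ has trivial monodromy up to its residue; a computation with the normal form should then produce a cyclic group of rotations of order $n/2$ fixing the foliation up to the target-isometries above, with the residue term $a/z$ in \eqref{nform2} contributing only a translation $w\mapsto w+c$ and hence \emph{not} breaking the symmetry, precisely because $P$ is compatible with $F$. When $n$ is odd the sign monodromy $\sqrt q\mapsto-\sqrt q$ obstructs working directly on $A_i$, and the natural move is to pass to the double cover on which $\sqrt q$ becomes single valued; there the analogous group has order $n$, and since the deck involution of the double cover is one of these rotations, the symmetry descends to $A_i$ with full order $n$. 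This is the branched-cover-of-a-harmonic-function picture alluded to in the introduction: $\bar{\phi}_i$ is recovered from a single model collapse by an $(n/2)$- or $n$-fold cyclic cover, and the rotational symmetry is the deck symmetry of that cover.

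The final step is the descent through the $\mathbb{Z}$-quotient: I must check that the rotation of $A_i$ found above carries leaves to leaves so as to permute the $n-2$ prongs and the intervening strips of $\overline{T}_U$ cyclically, so that it is realized by an honest graph isometry of $\overline{T}_U$ and therefore fixes $\bar{\phi}_i$ after quotienting by the $\mathbb{Z}$-action generated by $\langle\partial U\rangle$. The main obstacle I anticipate is making this symmetry \emph{exact} rather than merely asymptotic near the pole: one must control the combinatorics of the strips (whose widths are the local parameters of Definition~\ref{mf-pole}) under the rotation, and absorb the residue and lower-order contributions into a translation of the target so that no folding or mismatch of strip-edges occurs. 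Once the rotation is shown to preserve the arrangement of half-planes and strips of Definition~\ref{vp} and to act on $\overline{T}_U$ by an isometry, the claimed $n$-fold (respectively $n/2$-fold) rotational symmetry of $\bar{\phi}_i$ will follow.
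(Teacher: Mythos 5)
Your proposal takes essentially the same route as the paper: the paper's proof simply uniformizes $A_i$ as the round annulus $\{\delta/i<|z|<\delta\}$ in the normal coordinate of \eqref{nform1}/\eqref{nform2} and observes that those normal forms are invariant under the rotations $z\mapsto e^{2\pi i/n}z$ (for $n$ odd) and $z\mapsto e^{4\pi i/(n)}\cdot z$ of order $n/2$ (for $n$ even), which is exactly the invariance computation at the heart of your plan. The additional machinery you anticipate --- the double cover for odd $n$, tracking the residue as a target translation, and the strip combinatorics on $\overline{T}_U$ --- is deferred in the paper to Lemma~\ref{mz} and Proposition~\ref{ubound} rather than being part of this lemma's proof.
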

\begin{proof}
The uniformized round annulus can be taken to be
\begin{equation*}
A_i = \left\{ z \in \mathbb{D}^\ast \text{ } \vert \text{ }  \frac{\delta}{i} <  \lvert z \rvert < \delta\right\} 
\end{equation*}
The desired rotational symmetry is  evident from the fact that differential in (\ref{nform1}) is invariant under the coordinate change $z\mapsto \displaystyle e^{\frac{2\pi i}{n}} z$, for $n$ odd, and the differential in (\ref{nform2}) is invariant under the coordinate change $z\mapsto \displaystyle e^{\frac{2\pi i}{n/2}} z$ for $n$ even.
\end{proof}

In particular, the corresponding image tree $T_U$ has an $n$- (resp. $n/2$-) fold symmetry when $n$ is odd (resp. even).

The key advantage of the symmetry, that the following lemma asserts,  is that the map $\phi_i$ can be further thought of as a branched cover of a harmonic map from a ``quotient" annulus $\bar{A}_i$  to an interval.\\

In what follows we shall fix $n$ to be even.

\begin{lem}\label{mz}
In the setting just described, there is a map  $f_i$ from an annulus $\bar{A_i}$ to an interval $[-c_i,c_i]$ for some $c_i>0$  such that
\begin{itemize}
\item $f_i$ has mean zero, and
\item the map $\bar{\phi}_i$ on $A_i$  is an $n/2$-fold cover of $f_i$. 
\end{itemize}
Namely, there is a $n/2$-fold covering map  $p_i: A_i \to \bar{A_i}$ and a branched covering $b_i$ from $\overline{T}_0$ to $[-c_i,c_i]$ branched over $0$, such that $f_i \circ p_i = b_i \circ \bar{\phi}_i$.
\end{lem}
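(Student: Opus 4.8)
The plan is to realize $f_i$ as the quotient of the symmetric map $\bar\phi_i$ by its $n/2$-fold rotational symmetry, and then to check that this quotient is harmonic and balanced. First I would use the preceding lemma to set up the domain quotient. Write $\rho : A_i \to A_i$ for the rotation $z \mapsto e^{2\pi i/(n/2)} z$ of the uniformized round annulus; since the pole is excluded, $\rho$ generates a free $\mathbb{Z}/(n/2)$-action, so the quotient $\bar{A_i} := A_i/\langle \rho\rangle$ is again a round annulus and the projection $p_i : A_i \to \bar{A_i}$ is an unramified holomorphic $n/2$-fold covering. By the preceding lemma the restricted foliation $\mathcal{F}'|_{A_i}$ is $\rho$-invariant, so $\rho$ acts on its leaf space $\overline{T}_U$ by an isometry $\rho_\ast$, and the collapsing map intertwines the two actions: $\bar\phi_i \circ \rho = \rho_\ast \circ \bar\phi_i$.

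Next I would build the target fold $b_i$. The isometry $\rho_\ast$ generates a $\mathbb{Z}/(n/2)$-action on $\overline{T}_U$ fixing the central vertex; for the normal form (\ref{nform2}) the $n-2$ prongs and the intervening strip-edges are permuted freely away from this center, so the metric quotient $\overline{T}_U/\langle\rho_\ast\rangle$ is isometric to an interval $[-c_i,c_i]$, with $c_i$ the transverse measure from the center out to $\partial X_1$. Taking $b_i : \overline{T}_U \to [-c_i,c_i]$ to be this quotient gives a degree-$n/2$ branched covering, branched exactly over the image $0$ of the central vertex. Since $b_i$ is $\rho_\ast$-invariant by construction, the composite $b_i \circ \bar\phi_i$ is $\rho$-invariant, hence descends through $p_i$ to a well-defined map $f_i : \bar{A_i} \to [-c_i,c_i]$ satisfying $f_i \circ p_i = b_i \circ \bar\phi_i$, which is the asserted relation.

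It then remains to verify the two properties of $f_i$. For harmonicity, note that away from the center the collapsing map is locally $z \mapsto \Im\!\left(\int \sqrt{q}\right)$, the imaginary part of a holomorphic function (the sink neighborhood of the normal form contains no zeroes), hence harmonic; because $p_i$ is a local biholomorphism, $f_i$ is harmonic wherever $b_i \circ \bar\phi_i$ is. The only point to check is the preimage of the branch value $0$: there the prongs of $\overline{T}_U$ met by $\bar\phi_i$ are identified and unfolded by $b_i$, so on the quotient annulus the branching is resolved and $f_i$ is again locally $\Im$ of a descended canonical coordinate. For the mean-zero statement I would exhibit the sign-reversing symmetry coming from rotation by a single sector, $z \mapsto e^{2\pi i/n} z$: to leading order in the normal form this sends $\sqrt{q} \mapsto -\sqrt{q}$, hence $f_i \mapsto -f_i$ after passing to the quotient, forcing $f_i$ to have vanishing mean; the residue term contributes nothing to the mean by the balancing identity (\ref{comp-eq}).

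The step I expect to be the main obstacle is the precise identification of the target quotient $\overline{T}_U/\langle\rho_\ast\rangle$ with the interval $[-c_i,c_i]$, together with the verification of harmonicity across the branch locus. This requires pinning down the combinatorial and metric structure of the leaf-space tree of the normal form --- the cyclic arrangement of the $n-2$ prongs and the strip-edges --- carefully enough to see that the rotation quotient collapses onto an interval with a single fold, and that this folding does not destroy harmonicity at points where several leaves are identified. By contrast, the mean-zero claim is comparatively soft once the sign-reversing symmetry is in place; the subtlety there is making that symmetry exact rather than merely asymptotic near the pole, which is precisely where the residue balancing (\ref{comp-eq}) is needed.
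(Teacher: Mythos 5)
Your domain-side construction (quotienting the round annulus by the rotation group to obtain the unramified $n/2$-fold cover $p_i$) is equivalent to the paper's, which realizes $p_i$ concretely as $z\mapsto z^{n/2}$. The genuine gaps are on the target side. First, the metric quotient $\overline{T}_U/\langle\rho_\ast\rangle$ is \emph{not} an interval in general: when the residue $a$ in \eqref{nform2} has $\Re a\neq 0$, the graph $\overline{T}_U$ contains a nontrivial cycle (of length $2\pi\lvert\Re a\rvert$, the net transverse displacement around the pole), the rotation acts on that cycle by translation with no fixed point, and the quotient still contains a cycle; there is no "central vertex" fixed by $\rho_\ast$. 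The map $b_i$ of the lemma is therefore not a group quotient but a genuine fold, two-to-one over a neighborhood of $0$ in $[-c_i,c_i]$, and this is exactly where your harmonicity argument breaks: post-composing a harmonic map to a graph with a folding map is in general only subharmonic (compare $\Im z$ with $\lvert\Im z\rvert$), so the phrase "locally $\Im$ of a descended canonical coordinate" is the assertion that needs proof, not a justification of it --- the canonical coordinate $\int\sqrt q$ is multivalued on the punctured disk whenever $\Re a\neq 0$ and does not descend for free. The paper sidesteps all of this by \emph{exhibiting} $f_i$ as the imaginary part of an explicit single-valued meromorphic function on the quotient annulus (the collapsing map of the pushed-forward normal form \eqref{nform4}), so harmonicity is immediate and no folding argument is required.

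Second, your mean-zero argument does not close. The rotation $z\mapsto e^{2\pi i/n}z$ does not send $\sqrt q$ to $-\sqrt q$: from \eqref{nform2} the leading coefficient is multiplied by $-e^{2\pi i/n}$ rather than $-1$, and the residue term is unchanged, so there is no exact sign-reversing symmetry; an asymptotic antisymmetry cannot force an exact vanishing of a circle-average. Invoking \eqref{comp-eq} does not repair this, because that identity computes the period of $\Im\sqrt q$ around a loop (transverse-measure data), which is a different quantity from the mean of the collapsing \emph{function} $f_i$ over a circle. The mechanism that actually works, and which the paper's computation \eqref{expr-coll2} records, is that $f_i=\Im g$ for a single-valued meromorphic $g$ whose Laurent expansion has no constant term, so every Fourier mode of $f_i$ on a circle $\lvert w\rvert=r$ is a nonconstant mode and the average vanishes. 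You should replace both halves of your target-side argument with this explicit identification.
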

\begin{proof}

Note that by the change of coordinates $z\mapsto z^{n/2}=w$,  the differential (in a neighborhood of the pole) is seen to be the pullback by an $n/2$-fold branched cover of the differential
\begin{equation}\label{nform4}
\frac{4}{n^2}\left(\frac{1}{w^2} + \frac{a}{w}\right)^2 dw^2 
\end{equation}
where the branching is over $0$ in the $w$-coordinates.

Consider the exhaustion of  a neighborhood of such a pole by 
\begin{equation}\label{baru}
\bar{U}_i = \{ w \in \mathbb{D}^\ast \text{ } \vert \text{ } \lvert w \rvert < (\delta/i)^{n/2}\} 
\end{equation}
which lifts to our exhaustion (\ref{exhpole}) by the $n/2$-fold branched covering $z\mapsto z^{n/2} =w$.

We can then define the annulus $\bar{A_i} := \bar{U}_1 \setminus \bar{U}_{i}$ and the map $f_i$ to be the collapsing map for the differential in (\ref{nform4}). 

This map has mean zero, as one can easily verify: the dual metric tree to the induced foliation on the $w$-plane is the real line $\mathbb{R}$,  and the collapsing map  has the expression 
\begin{equation}\label{expr-coll}
f_i(w)  = \Im\left(\frac{1}{w^2} + \frac{a}{w}\right)
\end{equation}
where for convenience we have dropped the multiplicative real factor as in (\ref{nform4}).
In polar coodinates, this can be written as:
\begin{equation}\label{expr-coll2}
f_i(r,\theta)  = -\frac{\sin2\theta}{r^2} + \frac{\Im a}{r}\cos \theta - \frac{\Re a}{r}\sin \theta
\end{equation}
which has mean zero on any circle centered at the origin ($r= \text{constant}$).

Note that the image of its restriction to the boundary $\partial \bar{U}_i$ is the interval $[-c_i,c_i]$ where $c_i$ is the maximum value of the restriction.

Since the original differential given by the normal form (\ref{nform2})  is an $n/2$-fold branched covering of (\ref{nform4}), the collapsing map $\bar{\phi}_i$ is  $n/2$-fold branched cover of $f_i$. 
\end{proof}

\subsection*{Decay along annulus}
The key conclusion in the previous lemma is that $\bar{\phi}_i$ is then the  lift of a harmonic \textit{function} on a cylinder \textit{with mean zero} on any meridional circle. The following technical lemma (a strengthened version of Proposition 4.11 of \cite{GW1}) proves an ``exponential decay" for any harmonic function which has such mean-zero boundary conditions.  This will be subsequently used to prove a uniform energy bound on compacta for the harmonic maps $h_i$.

\begin{lem}\label{decay}
Let $C(L)$ be a flat Euclidean cylinder of circumference $1$ and length $L>1$. 
 Let $h:C(L)\to \mathbb{R}$ be a harmonic function with identical maps $f:S^1 \to \mathbb{R}$ on each boundary that satisfy:
\begin{itemize}
\item the maximum value of $\lvert f \rvert $ is $M$, and
\item the average value of $f$ on each boundary circle is $0$.
\end{itemize}
Then the maximum value of the restriction of $h$ to a fixed collar neighborhood of  the central circle is bounded by $O(Me^{-L/2})$, i.e there is a universal constant $K_0$ so that $\lvert h(L/2,\theta) \rvert  \leq   K_0 M e^{-L/2}$, independent of the boundary values $f$ of $h$.

Moreover, such an exponential decay holds for the derivative $\partial_\theta h$.
\end{lem}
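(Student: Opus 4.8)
The plan is to expand $h$ in a Fourier series on the cylinder and exploit the mean-zero condition to eliminate the constant mode, which is the only mode that does not decay. Parametrize $C(L)$ by $(t,\theta) \in [0,L] \times S^1$ with $\theta$ the circle coordinate normalized so the circumference is $1$. Harmonic functions on this flat cylinder are spanned by $e^{\pm 2\pi k t} e^{2\pi i k \theta}$ for $k \in \mathbb{Z}$, together with the affine-in-$t$ terms $1$ and $t$ coming from the $k=0$ mode. First I would write
\begin{equation*}
h(t,\theta) = a_0 + b_0 t + \sum_{k\neq 0} \left( a_k e^{2\pi k t} + b_k e^{-2\pi k t}\right) e^{2\pi i k \theta},
\end{equation*}
and observe that the mean-zero hypothesis on both boundary circles forces the $k=0$ part to vanish: integrating $h(0,\theta)$ and $h(L,\theta)$ over $\theta$ gives $a_0 = 0$ and $a_0 + b_0 L = 0$, hence $b_0 = 0$. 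This is the crucial structural point — without the mean-zero condition the constant mode would survive and no decay could hold.

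Next I would control the surviving modes ($k \neq 0$) at the central circle $t = L/2$. Each such term contributes $(a_k e^{\pi k L} + b_k e^{-\pi k L}) e^{2\pi i k\theta}$, and the point is that at the midpoint both exponential factors $e^{2\pi k (L/2)}$ and $e^{-2\pi k(L/2)}$ are bounded by $e^{-\pi|k|L}\cdot(\text{boundary-determined size})$ once the coefficients are re-expressed in terms of the boundary data. Concretely, since $f$ is the common boundary value, its Fourier coefficients $\hat f(k)$ satisfy $\hat f(k) = a_k + b_k = a_k e^{2\pi k L} + b_k e^{-2\pi k L}$; solving this $2\times 2$ system for $a_k, b_k$ and substituting into the value at $t=L/2$ yields, for each $k\neq 0$, a bound of the form $|\hat f(k)|\, O(e^{-\pi|k|L})$ via the hyperbolic-sine identity $a_k e^{\pi kL}+b_k e^{-\pi kL} = \hat f(k)\,\frac{\cosh(\pi k L)}{\cosh(\pi k L)}$-type simplification (the symmetric boundary data makes the midpoint the natural balance point, so the worst exponential is $e^{-\pi|k|L}$). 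Since $|\hat f(k)| \leq \int |f| \leq M$ for every $k$, summing the geometric-type series $\sum_{k\geq 1} M e^{-\pi k L}$ gives a total bounded by $K_0 M e^{-\pi L}$, which is certainly $O(Me^{-L/2})$; choosing the constant $K_0$ to absorb the series and the factor $\pi$ yields the stated estimate $|h(L/2,\theta)| \leq K_0 M e^{-L/2}$ uniformly in $\theta$ and independent of $f$.

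For the derivative statement I would differentiate the series termwise in $\theta$, which multiplies the $k$-th coefficient by $2\pi i k$. The extra polynomial factor $|k|$ is harmlessly dominated by the exponential decay $e^{-\pi |k| L}$, so the same summation gives $|\partial_\theta h(L/2,\theta)| \leq K_0' M e^{-L/2}$ with a possibly larger universal constant. The same argument applies verbatim to $\partial_t h$ if needed, since differentiating in $t$ brings down a factor $2\pi k$ as well.

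The main obstacle I anticipate is not the Fourier bookkeeping but making the estimate genuinely \emph{uniform} in the boundary data $f$ — that is, ensuring $K_0$ depends on nothing but is a true universal constant. The mean-zero hypothesis is exactly what removes the non-decaying mode, so the real care lies in the solve-for-$a_k,b_k$ step: one must check that the coefficient matrix $\bigl(\begin{smallmatrix} 1 & 1 \\ e^{2\pi kL} & e^{-2\pi kL}\end{smallmatrix}\bigr)$ does not produce amplification when $L$ is large, i.e.\ that its inverse, composed with evaluation at the midpoint, is uniformly controlled by $e^{-\pi|k|L}$. Because the boundary values at the two ends are \emph{identical} (not merely of comparable size), the symmetry makes the midpoint evaluation clean and this amplification does not occur; I would verify this symmetry is used essentially. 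I would also take brief care that the boundary function $f$, arising geometrically as a collapsing-map value, is regular enough (Lipschitz suffices) to have an absolutely convergent Fourier series so that termwise differentiation and the sup-norm estimate are justified.
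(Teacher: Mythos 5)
Your proposal is correct and follows essentially the same route as the paper's proof (Appendix~\ref{appendix:decay}): Fourier-decompose $h$, use the mean-zero hypothesis to eliminate the constant (non-decaying) mode, observe that each nonzero mode contributes $\hat f(k)/\cosh(\pi k L)$ at the central circle and hence decays like $e^{-\pi |k| L}$, and sum over $k$, with the $\theta$-derivative handled by the extra harmless factor of $|k|$. The only (cosmetic) difference is that you bound $|\hat f(k)|\le M$ uniformly and sum the geometric series directly, whereas the paper uses Parseval ($\sum |M_n|^2 \le M^2$) together with Cauchy--Schwarz; both give the required universal constant.
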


The proof is by a straightforward ``spectral decay" argument and is an extension of a similar derivation in \cite{GW1}; for the sake of completeness, we include a proof, but so as not to interrupt the discussion, we relegate that argument to Appendix~\ref{appendix:decay}.

\begin{figure}
  \centering
  \includegraphics[scale=0.4]{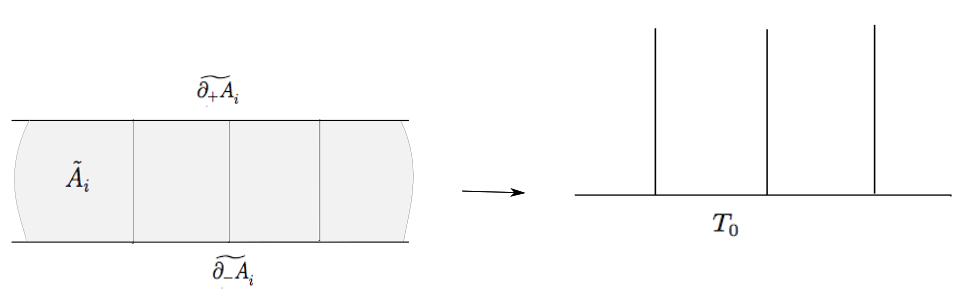}\\
  \caption{The solution of the partially free boundary problem is the $\mathbb{Z}$-equivariant map of least energy  from $ \tilde{A}_i$ to $ T_U$ that restricts to $\phi\vert_{\widetilde{\partial_+ A}_i.}$ on one boundary component. }
  \end{figure}

\subsection*{Partially free boundary problem} The key to proving the uniform energy bound is to control the harmonic map (\textit{ie.} establish $C^0$-bounds) in any lift of the annulus ${A}_i$.  In what follows it shall be useful to consider the harmonic map
\begin{equation}\label{psii}
 \psi_i: \tilde{A}_i \to T_U
 \end{equation}
  that is equivariant with respect to the $\mathbb{Z}$-action on the spaces, and  solves the following

\begin{prob} \label{problem: PFBVP}
	(The partially free boundary value problem.) Find a map $\psi_i: \tilde{A}_i \to T_U
$ which minimizes the equivariant energy amongst  $\mathbb{Z}$-equivariant locally square-integrable maps from $\tilde{A}_i$ to $T_U$ that restrict to the map $\phi\vert_{\widetilde{\partial_+{A_i}}}$ on the lift of one of the boundary components $\partial_+A_i$, but has no prescribed condition on the  lift of the other boundary component $\partial_{-}A_i$.
\end{prob}
 
(Recall that $A_i = X_{i+1} \setminus X_1$. Here and in what follows ${\partial_+A}_i$ shall mean $\partial X_{i+1}$ and $\partial_-A_i$ that we call the ``free boundary" will mean $\partial X_1$.) \\

We shall estimate the map $\psi_i$, and along the way, also prove the existence of such a map. We summarize the latter as the following proposition.

\begin{prop}\label{doub}
Consider the annulus $\hat{A_i}$ obtained by doubling across the free boundary and solve the Dirichlet problem on the bi-infinite strip $\tilde{A}_i$ that is its universal cover, with identical (and periodic) boundary conditions determined by the model map $\phi$ on each boundary component. Call this map $\hat{\psi_i}$. 
Then the restriction of $\hat{\psi_i}$ to one of the halves of the strip (i.e. one that is a lift of $\tilde{A}_i$) is a solution $\psi_i$ of the partially free boundary problem (Problem \ref{problem: PFBVP}). 
\end{prop}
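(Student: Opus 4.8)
The plan is to exploit the reflective symmetry created by doubling: the free (Neumann-type) condition on the free boundary $\partial_-A_i$ is precisely the condition forced on a symmetric solution of the doubled \emph{Dirichlet} problem along its axis of reflection. I would therefore produce $\hat\psi_i$ by the compact Dirichlet theory of \S\ref{sec: Harmonic map from surface-with-boundary}, show it is invariant under the doubling involution by uniqueness, and then identify its restriction with the free-boundary minimizer by an energy-splitting comparison.

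First I would set up the symmetry. The double $\hat A_i$ carries an isometric involution $\iota$ that interchanges the two copies of $A_i$ and fixes the doubling circle $\partial_-A_i=\partial X_1$ pointwise; it lifts to a reflection of the bi-infinite strip $\tilde A_i$ across the central line covering $\partial_-A_i$, commuting with the $\Z$-action. Since the prescribed data on the two boundary components of $\hat A_i$ are the identical periodic values coming from $\phi$, this Dirichlet data is $\iota$-invariant. Because $\hat A_i$ is a compact annulus, its lift carries finite equivariant energy, so existence (and uniqueness) of the equivariant solution $\hat\psi_i$ follows verbatim from the construction of the $h_i$ (steps (a) and (b) of \S\ref{sec: Harmonic map from surface-with-boundary}, together with Proposition \ref{prop: h-i unique}).

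The invariance of $\hat\psi_i$ is then immediate: precomposing a harmonic map with a domain isometry that preserves the boundary data yields another harmonic map of the same energy with the same boundary data, so $\hat\psi_i\circ\iota$ also solves the equivariant Dirichlet problem. By the uniqueness in Proposition \ref{prop: h-i unique} we get $\hat\psi_i\circ\iota=\hat\psi_i$; in particular the energy splits evenly across the doubling circle, $\mathcal{E}(\hat\psi_i)=2\,\mathcal{E}(\hat\psi_i|_{A_i})$. Set $\psi_i:=\hat\psi_i|_{A_i}$. This map is $\Z$-equivariant, locally square-integrable, and agrees with $\phi$ on $\partial_+A_i$, so it is an admissible competitor for Problem \ref{problem: PFBVP}. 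To see it is minimizing, take any competitor $\psi$ for the free boundary problem and reflect it across $\partial_-A_i$ to a map $\hat\psi$ on $\hat A_i$: this $\hat\psi$ is continuous across the fixed doubling circle, remains locally square-integrable, matches $\phi$ on both boundary components, and satisfies $\mathcal{E}(\hat\psi)=2\,\mathcal{E}(\psi)$. The Dirichlet minimality of $\hat\psi_i$ then gives $2\,\mathcal{E}(\psi_i)=\mathcal{E}(\hat\psi_i)\le\mathcal{E}(\hat\psi)=2\,\mathcal{E}(\psi)$, whence $\mathcal{E}(\psi_i)\le\mathcal{E}(\psi)$ for every competitor, as required.

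The main obstacle is the careful bookkeeping of the doubling for the singular, non-locally-compact tree target $T_U$ in the noncompact equivariant setting: one must verify that reflecting an admissible free-boundary competitor produces an admissible finite-(equivariant-)energy Dirichlet competitor, and in particular that no energy is created or lost along the crease on $\partial_-A_i$, so that the Korevaar--Schoen energy is genuinely additive over the two halves and the splitting $\mathcal{E}(\hat\psi)=2\,\mathcal{E}(\psi)$ holds with no defect term. Once this additivity and the continuity across the doubling circle are confirmed, the free (Neumann) condition on $\partial_-A_i$ is automatic from the $\iota$-invariance of $\hat\psi_i$, and the energy comparison above closes the argument.
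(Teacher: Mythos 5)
Your proof is correct, but it takes a genuinely different route from the paper's. The paper argues in the opposite direction: it starts from a solution $\psi_i$ of the partially free boundary problem, derives the Neumann condition (vanishing normal derivative) at free-boundary points whose image avoids a vertex via a first-variation computation, then carries out a careful analysis showing that components of the preimage of a vertex can meet the free boundary only in isolated points, so that the reflected map is weakly harmonic away from finitely many points (mod $\Z$); a removable-singularity argument for the Hopf differential then shows the double is genuinely harmonic, and uniqueness of the Dirichlet solution identifies it with $\hat{\psi_i}$. You instead bypass all of that regularity analysis with a pure symmetrization and energy-comparison argument: $\iota$-invariance of $\hat{\psi_i}$ from uniqueness, even splitting of energy across the crease, and reflection of an arbitrary free-boundary competitor into a Dirichlet competitor of twice the energy. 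Your route is more elementary and has the added benefit of \emph{producing} a solution to Problem~\ref{problem: PFBVP} rather than presupposing one (the paper's appendix takes the existence of the free-boundary minimizer as given), and it still yields the uniqueness corollary. The one point you correctly flag --- that reflecting a $W^{1,2}$ competitor across $\partial_-A_i$ yields a $W^{1,2}$ map on the double with additive energy --- is exactly the Korevaar--Schoen trace/gluing theory (traces from the two sides agree by construction, and the crease has measure zero), so it is a genuine but standard verification. What the paper's longer argument buys, and yours does not, is the finer local structure at the free boundary: the explicit Neumann condition, the fact that vertex level sets meet $\partial_-A_i$ in singletons, and the observation that the Hopf differential of the doubled map may vanish on the central circle; none of this appears to be essential downstream, but it is information your energy comparison does not recover.
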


\emph{Sketch of proof}[details in the Appendix]: We provide a brief sketch of the argument here: We first show that a solution of the partially free boundary problem is characterized by having normal derivatives that vanish at  the points of the free boundary where its image avoids a vertex.  By an analysis of the possible preimages of the vertices of the target tree, we show that such preimages intersect the free boundary at finitely many points (up to the $\mathbb{Z}$-action). These imply that a solution $\psi_i$ has normal derivatives vanishing in all but finitely many points (up to the $\mathbb{Z}$-action), and hence one can define a weakly-differentiable map $\psi_i^*$ on  the universal cover of $\hat{A_i}$ that restricts to $\psi_i$ and its reflection on each half. By the boundedness of the derivatives in the $L^1$-norm, the above points are removable singularities for the holomorphic Hopf differential of $\psi_i^*$.  Hence this map $\psi_i^*$ is harmonic, and  by the uniqueness of the solution to the Dirichlet problem, it must coincide with the map $\hat{\psi_i}$ on the lift of the doubled annulus.

Moreover, by the existence of a solution to this Dirichlet problem (see \S2.2 of \cite{KorSch1}; see also \cite{Wolf2}, and the argument described in section~\ref{sec: Harmonic map from surface-with-boundary}, paragraph (a)), we obtain a solution to Problem~\ref{problem: PFBVP}.
\qed\\

Note that by the uniqueness of solutions to the Dirichlet problem, we then immediately also have:

\begin{cor}
The solution to the partially free boundary problem is unique. 
\end{cor}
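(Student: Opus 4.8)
The plan is to reduce uniqueness for the partially free boundary problem to the already-established uniqueness for the Dirichlet problem, exactly as the remark preceding the statement suggests. Suppose $\psi_i$ and $\psi_i'$ are two solutions of Problem~\ref{problem: PFBVP}; the goal is to show $\psi_i = \psi_i'$. The whole argument turns on promoting each partially free solution to a Dirichlet solution on the doubled annulus, where uniqueness is known.

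First I would recall the reflection construction from the proof of Proposition~\ref{doub}. Given any solution $\psi_i$ of the partially free boundary problem, that argument produces a map $\psi_i^*$ on the universal cover of the doubled annulus $\hat{A_i}$ by setting it equal to $\psi_i$ on one half and to the reflection of $\psi_i$ across the free boundary $\partial_- A_i$ on the other. The content of that proof is that $\psi_i^*$ is in fact harmonic: the first-variation condition at the free boundary forces the normal derivative of $\psi_i$ to vanish at all points of $\partial_- A_i$ whose image avoids a vertex of $T_U$, and the preimages of vertices meet the free boundary in only finitely many points up to the $\mathbb{Z}$-action, so the putative singular seam of the doubled map is removable for its holomorphic Hopf differential. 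Thus $\psi_i^*$ is an equivariant harmonic map on the bi-infinite strip assuming the prescribed periodic boundary data coming from $\phi$ on both boundary components; that is, $\psi_i^*$ solves precisely the Dirichlet problem whose unique solution Proposition~\ref{doub} denotes $\hat{\psi_i}$. Running the identical construction on $\psi_i'$ yields a second solution $(\psi_i')^*$ of the same Dirichlet problem.

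I would then invoke the uniqueness of the solution to the equivariant Dirichlet problem (the convexity-of-energy argument recalled in Proposition~\ref{prop: h-i unique}, applied here to the doubled annulus) to conclude $\psi_i^* = (\psi_i')^* = \hat{\psi_i}$. Restricting this common map to the half that is a lift of $\tilde{A}_i$ recovers $\psi_i = \psi_i'$, which is the desired uniqueness. Note that this also re-identifies the unique solution explicitly as the restriction $\hat{\psi_i}\vert_{\tilde{A_i}}$.

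The only point requiring genuine care — and the step I expect to be the crux — is the harmonicity of the reflected map $\psi_i^*$ across the free boundary, since a priori reflecting a harmonic map across a free boundary can introduce a fold or a singular seam. However, this is exactly the vanishing-normal-derivative and removable-singularity analysis already carried out in the proof of Proposition~\ref{doub}, so in the present argument it may be cited rather than redone. Once $\psi_i^*$ is known to be harmonic, the remainder is a formal appeal to Dirichlet uniqueness, which is why the corollary follows immediately.
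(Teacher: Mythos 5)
Your argument is correct and is essentially the paper's own: Proposition~\ref{doub} already shows that any solution of the partially free boundary problem reflects to a harmonic map on the doubled annulus solving the Dirichlet problem with boundary data $\phi$, and the corollary then follows by Dirichlet uniqueness, exactly as you restrict back to one half. Your identification of the reflection/removable-singularity step as the crux, and your citation of it rather than re-derivation, matches the paper's intent.
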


\subsection*{A uniform control} Before proving the energy bounds for $h_i$, we continue with our analysis of the solutions to the partially free boundary problem $\psi_i$ that we just introduced; in particular, we aim to control the image of this map on the lift of the free boundary.

Recall that $T_U$  is the leaf-space of the lift of the model foliation in $U \setminus p$ to its universal cover.\\

The preceding lemma is crucial in the following

\begin{prop}\label{ubound}
The map $\psi_i\vert_{\widetilde{\partial_-A_i}}$, which is the solution of the partially free boundary problem when restricted to the lift of the free boundary, has uniformly bounded image in the metric tree $T_U$. 
\end{prop}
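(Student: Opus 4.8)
The plan is to exploit the decomposition of $\psi_i$ into a symmetric (branched-cover) part governed by the harmonic \emph{function} $f_i$ of Lemma~\ref{mz}, and to apply the exponential decay estimate of Lemma~\ref{decay} to that function. First I would recall that, by Proposition~\ref{doub}, the solution $\psi_i$ is the restriction to one half of the Dirichlet solution $\hat{\psi}_i$ on the doubled annulus $\hat{A}_i$, whose boundary data on both components is the periodic model map $\phi$ coming from the foliation $F^\prime\vert_U \in \mathcal{P}_n$. Since $n$ is even, Lemma~\ref{mz} lets me pass to the $(n/2)$-fold quotient: the map $\bar\psi_i$ obtained by descending through the covering $p_i$ is, after composing with the branched covering $b_i$ onto the interval, a genuine harmonic \emph{function} $g_i$ on the quotient cylinder $\bar{A}_i$, and its boundary values agree with $f_i$ as in \eqref{expr-coll2}. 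The crucial structural point is that the boundary function $f_i$ has \emph{mean zero} on each meridional circle, exactly the hypothesis required by Lemma~\ref{decay}.

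Next I would feed this harmonic function into Lemma~\ref{decay}. Uniformizing $\bar{A}_i$ (equivalently $\hat{A}_i$ in its doubled, quotiented form) to a flat cylinder $C(L_i)$ of circumference $1$, the boundary data has maximum modulus $M$ bounded \emph{independently of $i$} — this is because the boundary is the fixed circle $\partial X_1$ (respectively its image under the model map $\phi$), whose foliation data does not change as the exhaustion parameter $i$ grows; only the length $L_i$ of the cylinder increases, with $L_i \to \infty$. Lemma~\ref{decay} then gives a bound of the form $|g_i| \le K_0 M e^{-L/2}$ on a fixed collar neighborhood of the central circle, and in particular a uniform-in-$i$ bound on the value of $g_i$ on the \emph{free boundary} side, which after the symmetric doubling sits at a controlled (bounded below, uniformly in $i$) distance from the prescribed boundary $\partial_+A_i$. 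Translating back up through the branched covering $b_i$ and the $(n/2)$-fold cover $p_i$, the image of $\psi_i$ on $\widetilde{\partial_-A_i}$ is thereby confined to a segment of $T_U$ whose length is bounded independently of $i$; the transverse-measure interpretation of tree distance from \eqref{trans} converts the $C^0$ bound on the function into the desired bound on the image in the metric tree.

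The main obstacle, and the step I would spend the most care on, is making rigorous the reduction from the \emph{map} $\psi_i$ valued in the tree $T_U$ to the scalar harmonic \emph{function} $g_i$, and verifying that the Dirichlet/partially-free solution genuinely respects the $(n/2)$-fold symmetry so that it \emph{is} the branched cover of a function. Lemma~\ref{mz} establishes this symmetry for the model collapsing map $\bar\phi_i$, but I must argue that the energy-minimizing solution $\psi_i$ inherits it: this follows from the uniqueness of the solution to the (doubled) Dirichlet problem (the Corollary after Proposition~\ref{doub}) together with the invariance of the boundary data under the rotational symmetry $z \mapsto e^{4\pi i/n} z$ from the normal form \eqref{nform2}, since a rotation of a minimizer is again a minimizer with the same (rotated, hence identical) boundary data. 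One further subtlety is the presence of the vertices of $T_U$: where the harmonic map hits a vertex the branched-covering picture degenerates, so I would note, as in the sketch of Proposition~\ref{doub}, that the preimages of vertices meet the relevant circles in only finitely many points (up to the $\mathbb{Z}$-action) and therefore do not affect the mean-zero computation or the decay estimate, which are insensitive to a finite set.
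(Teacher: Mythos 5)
Your overall route --- double across the free boundary via Proposition~\ref{doub}, descend through the $n/2$-fold symmetry to a mean-zero harmonic \emph{function} as in Lemma~\ref{mz}, and then invoke the decay estimate of Lemma~\ref{decay} --- is exactly the paper's strategy, and your care about whether the minimizer inherits the rotational symmetry (uniqueness plus invariance of the boundary data) is well placed. But there is a genuine error at the quantitative heart of the argument: you assert that the Dirichlet data has maximum modulus $M$ bounded independently of $i$ ``because the boundary is the fixed circle $\partial X_1$.'' This reverses the roles of the two boundary components. In Problem~\ref{problem: PFBVP}, the \emph{prescribed} boundary is $\partial_+ A_i = \partial X_{i+1}$, the circle $\lvert z\rvert = \delta/(i+1)$ shrinking toward the pole, and the \emph{free} boundary is the fixed circle $\partial_- A_i = \partial X_1$. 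After doubling across the free boundary, both boundary circles of $\hat{A}_i$ carry the data $\phi\vert_{\partial X_{i+1}}$, whose maximum modulus is $c_i$, the sup of $\Im(1/w^2 + a/w)$ on $\lvert w\rvert = (\delta/i)^{n/2}$; this grows like $O(i^n)$ and is emphatically \emph{not} uniformly bounded. The entire content of the proposition is the balance between this growth and the growth of the modulus of $\bar{A}_i$, which is $O(n\ln i)$: substituting into Lemma~\ref{decay} gives $c_i\, e^{-L_i} = O(1)$ at the central circle (the image of the free boundary), which is the uniform bound. With your premise, the conclusion you would reach is that the image on the free boundary shrinks to a point as $i \to \infty$, which is both unsupported and not what happens.

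Two smaller points. First, your remark that the free boundary ``sits at a controlled (bounded below, uniformly in $i$) distance from the prescribed boundary'' points in the wrong direction: what the argument needs is not a lower bound on the conformal distance but its precise logarithmic \emph{growth}, since that growth is what cancels the polynomial blow-up of $c_i$. Second, your proposal treats only the case of $n$ even; the paper completes the statement for $n$ odd by passing to a two-fold branched cover of the normal form \eqref{nform1}, producing a pole of order $2n$, running the even-order argument there, and descending. Neither of these is fatal to the architecture of your proof, but the misidentification of where the Dirichlet data lives is, and the proof cannot be repaired without restoring the $c_i$--versus--$L_i$ computation.
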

\begin{proof}

First, consider the case that $n$ is even. 

Recall from Proposition~\ref{doub} that $\psi_i$ is ``half" of the solution $\hat{\psi_i}$ to a Dirichlet problem on the strip that is the lift of the doubled annulus $\hat{A_i}$, where the identical boundary conditions are determined by the model map $\phi$ on each boundary component).

Note that by the structure of the foliation near the puncture, and its collapsing map $\phi$,  the image of the map $\hat{\psi_i}$ on the  lifts of the two  distinct boundaries of the annulus is the same sub-tree $T_U^i \subset T_U$.

The convex hull of the sub-tree $T^i_U$  is itself, and hence the entire image of the $\mathbb{Z}$-equivariant harmonic map is contained in $T_U$. (Recall that because there is a distance-decreasing map on the (NPC) tree into a convex set, the image of an energy-minimizing map is contained in the convex hull of the image of its boundary, lest a composition with the distance decreasing projection reduce the energy of the map.) 
 
By the equivariance, the solution $\hat{\psi_i}$ passes to a quotient map from $A_i$ to a graph $\overline{T_U}$ with one cycle. Moreover, by the symmetry of the boundary conditions, this quotient map is in fact a $n/2$-fold cover of a harmonic function  $ \bar{\psi_i}$ of mean zero, as in Lemma \ref{mz}. 

As asserted in Lemma \ref{decay}, such a harmonic function with mean zero has an exponential decay to the central circle. 

Namely, consider the coordinates $\{(x,\theta) \vert 0<x<2L, 0\leq  \theta <2\pi\}$ on the double of the annulus $\bar{A}_i$. Then we have:
\begin{equation}\label{bd1}
\lvert \bar{\psi_i} (L,\cdot) \rvert  = O( c_i e^{-L})  
\end{equation}
where note that $L$ equals the modulus of the annulus $\bar{A}_i$, and $c_i$ is the maximum value achieved on the boundary, as in Lemma \ref{mz}. 

This exponential decay is inherited by the map $\hat{\psi_i}$ in the cover. 

Note that the above estimates of exponential decay depends on the modulus of the annulus; this shall be the key estimate on the geometry of the map and its domain annulus we will need, to conclude the proof of the proposition.

 That is, though the maximum boundary value $c_i \to \infty$ gets larger as $i\to \infty$, so does the  modulus of $\bar{A}_i$,  and the above decay shall  balance out to prove the result.

To be more precise: $c_i$ is the maximum value of $\bar{\phi}_i$ given by (\ref{expr-coll}), when the map is restricted to the circle $\lvert w \rvert = (\delta/i)^{n/2}$. Hence  it grows like $O(i^n)$. On the other hand, the modulus of $\bar{A}_i$ can be calculated to be, from (\ref{baru}), of the order of $O(n\ln i)$. Substituting in (\ref{bd1}), we obtain $\lvert \bar{\psi_i} (L,\cdot) \rvert = O(1)$ as desired. 

All the preceding discussion in this proof was for the case when $n$ was even. 

For $n$ odd, we reduce it to the former case by a trick of passing to a further double cover:

Namely, consider the quadratic differential with a pole of order $2n$ obtained by a two-fold branched cover of (\ref{nform1}). The exhaustion (\ref{exhpole}) lifts to a neighborhood of this pole, and  one can consider the partially free boundary problem on the lifts of the corresponding annuli $\hat{A}_i$, each a double cover of the original annulus $A_i$. 
The solution $\psi_i$ to the partially free boundary problem on $\tilde{A}_i$ is then a two-fold quotient of the solution $\hat{\psi_i}$ on this double cover. By the preceding argument for an even-order pole, the map $\hat{\psi_i}$  is uniformly bounded on the free boundary  and hence so is the quotient map $\psi_i$.
\end{proof}

We record for future use the following sharper consequence of the previous argument:

\begin{cor}\label{cor-20}
There exists a $\delta>0$ such that  $\psi_i\vert_{\tilde{A}_\delta}$ has a uniformly (independent of the index $i$) bounded image in $T_U$, where $A_i^\delta$ is a fixed $\delta$-collar of the free boundary component  $\partial_- A_i$ of the annulus $A_i$. 
Moreover, the weak angular derivatives of $\psi_i$ are uniformly bounded on this lift of $A_i^\delta$.
\end{cor}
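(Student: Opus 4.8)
The plan is to re-run the exponential-decay argument from the proof of Proposition~\ref{ubound}, but now to extract from Lemma~\ref{decay} the two strengthenings we did not previously invoke: that the decay holds not just on the central circle but throughout a \emph{fixed} collar of it, and that the same rate is enjoyed by the angular derivative $\partial_\theta$. Both conclusions are already recorded in the statement of Lemma~\ref{decay}, so the only genuinely new work is transporting them through the branched-covering picture of Lemma~\ref{mz} back to the tree-valued map $\psi_i$.

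First I would treat $n$ even. As in Proposition~\ref{ubound}, pass to the quotient harmonic function $\bar{\psi_i}$ of mean zero on the double of the annulus, parametrized by $\{(x,\theta): 0<x<2L,\ 0\le\theta<2\pi\}$, whose free boundary $\partial_- A_i$ sits at the central circle $x=L$; recall that $L=O(n\ln i)$ while the maximal boundary value is $c_i=O(i^n)$. Fix a collar width $\delta_0>0$. Since every point with $\lvert x-L\rvert<\delta_0$ lies at distance at least $L-\delta_0$ from the boundary of the cylinder, Lemma~\ref{decay}, applied both to $\bar{\psi_i}$ and to $\partial_\theta\bar{\psi_i}$, gives
\begin{equation*}
\max_{\lvert x-L\rvert<\delta_0}\ \max_{\theta}\ \Big(\lvert\bar{\psi_i}(x,\theta)\rvert+\lvert\partial_\theta\bar{\psi_i}(x,\theta)\rvert\Big)=O\!\left(c_i\,e^{-(L-\delta_0)}\right)=O\!\left(e^{\delta_0}\,c_i\,e^{-L}\right).
\end{equation*}
Substituting the growth rates yields $c_i e^{-L}=O(i^{n}\cdot i^{-n})=O(1)$, so both $\bar{\psi_i}$ and $\partial_\theta\bar{\psi_i}$ are bounded on this collar \emph{uniformly in} $i$. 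Choosing $\delta$ small enough that the lift of the $\delta$-collar $A_i^\delta$ of $\partial_- A_i$ falls inside this region (for all large $i$, with the finitely many remaining indices handled individually) gives the desired control on the scalar model.

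It remains to transfer these bounds to $\psi_i$ itself through the branched cover $b_i$ of Lemma~\ref{mz}, under which $\bar{\psi_i}=b_i\circ\psi_i$ (up to the covering of the domain). The image bound is immediate: $b_i$ preserves distance to its branch point, so $d_{T_U}(\psi_i(\cdot),v)=\lvert\bar{\psi_i}(\cdot)\rvert$ for the branch vertex $v$, and a uniform sup-norm bound on $\bar{\psi_i}$ confines the image $\psi_i(\tilde{A}_i^\delta)$ to a ball of fixed radius in $T_U$. For the weak angular derivatives, note that $b_i$ is a local isometry away from its single branch value $0$; hence $\lvert\partial_\theta\psi_i\rvert=\lvert\partial_\theta\bar{\psi_i}\rvert$ wherever $\psi_i$ avoids $v$, and $\lvert\partial_\theta\psi_i\rvert$ inherits the uniform bound a.e. For $n$ odd I would first pass to the two-fold branched cover producing an even-order pole, exactly as at the end of Proposition~\ref{ubound}, apply the above, and descend; the covering degree is a fixed factor independent of $i$, so the uniform bounds persist.

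The step requiring the most care is the last: reconciling the weak angular derivative of the tree-valued map with that of the scalar function across the branch locus of $b_i$, i.e.\ the preimages of the vertex $v$ where the folding of $T_U$ onto the interval destroys the local isometry. The point is that these preimages are isolated in each meridional slice of the annulus, so the pointwise bound obtained off of them extends across them by the absolute continuity of the harmonic map along lines; thus they contribute nothing to the essential supremum of $\lvert\partial_\theta\psi_i\rvert$, and the uniform derivative bound survives passage to $\psi_i$.
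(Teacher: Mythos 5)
Your proposal is correct and follows essentially the same route as the paper: the paper's proof of Corollary~\ref{cor-20} likewise just re-runs the Proposition~\ref{ubound} reduction (doubling plus the branched-cover quotient of Lemma~\ref{mz}) and invokes the exponential decay of both $\bar{\psi_i}$ and $\partial_\theta\bar{\psi_i}$ from Lemma~\ref{decay} on a fixed collar of the free boundary, balancing $c_i=O(i^n)$ against the modulus $O(n\ln i)$. You spell out the transfer back through $b_i$ and the odd-$n$ double cover more explicitly than the paper does, but the underlying argument is the same.
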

\begin{proof}
As before, we uniformize $A_i$ as a flat Euclidean cylinder, a fix a lift which is a flat strip $\tilde{A}_i$. 
The decay of $\psi_i$ and $\partial_\theta \psi_i$ towards the lift of the free boundary component $\partial_-A_i=\partial X_1$, implied by Lemma \ref{decay}, provides a uniform bound in any fixed collar-neighborhood of the free boundary. 
\end{proof}

\subsection*{Uniform energy bounds}

We first note the following energy-minimizing property of the map $\phi_i = \phi\vert_{\tilde{A}_i}$, being the collapsing map for the foliation induced by the meromorphic quadratic differential $P^2dz^2$:

\begin{lem}\label{emin} The map $\phi_i$ has minimum equivariant energy among all $\mathbb{Z}$-equivariant smooth maps with the same boundary conditions on the lifts of $\partial_+A_i$ and $\partial_-A_i$.
\end{lem}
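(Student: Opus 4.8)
The plan is to combine two facts: first, that the collapsing map of the horizontal foliation of a holomorphic quadratic differential is harmonic (recalled in \S2.4), and second, that harmonic maps into an NPC target are energy-minimizing in their class with prescribed boundary data. To begin, I would observe that on the annulus $A_i = X_{i+1}\setminus X_1$, which excludes the pole $p$, the differential $P^2dz^2$ is holomorphic; hence its collapsing map $\phi_i = \phi\vert_{\tilde{A}_i}$ is harmonic as a map to the $\R$-tree $T_U$, including at the zeros of $P^2dz^2$ where the foliation acquires prong-singularities. In particular $\phi_i$ is a critical point of the equivariant energy functional $\mathcal{E}$ among variations fixing the lifts of $\partial_+A_i$ and $\partial_-A_i$.

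Next I would set up the comparison. Let $g:\tilde{A}_i \to T_U$ be any $\mathbb{Z}$-equivariant smooth competitor agreeing with $\phi_i$ on the lifts of both boundary components, and let $\{g_t\}_{t\in[0,1]}$ be the geodesic homotopy from $g_0 = \phi_i$ to $g_1 = g$, where $g_t(x)$ divides the geodesic segment $[\phi_i(x),g(x)]\subset T_U$ in ratio $t$. Since $T_U$ is a tree, hence NPC, this interpolation is well-defined, each $g_t$ is again $\mathbb{Z}$-equivariant, and $g_t$ retains the common boundary values for every $t$ because $\phi_i$ and $g$ agree there. The key structural input is that for an NPC target the equivariant energy $t\mapsto \mathcal{E}(g_t)$, computed over a fundamental domain (which is compact since $A_i$ is), is a convex function of $t$; this is a standard consequence of the NPC inequality in the Korevaar--Schoen theory \cite{KorSch}.

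Finally I would conclude by combining convexity with criticality: since $\phi_i$ is harmonic, the first variation of $\mathcal{E}$ along the homotopy vanishes at $t=0$ (the variation fixes the boundary), so the right-derivative of the convex function $\mathcal{E}(g_t)$ at $t=0$ is nonnegative; convexity then forces $\mathcal{E}(g)=\mathcal{E}(g_1)\geq \mathcal{E}(g_0)=\mathcal{E}(\phi_i)$, which is the asserted minimality. Equivalently, one may invoke existence of an energy-minimizing solution to this equivariant Dirichlet problem, note it is harmonic, and identify it with $\phi_i$ by the uniqueness of harmonic maps with fixed boundary data, by an argument identical to that of Proposition \ref{prop: h-i unique}. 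I expect the only delicate point to be the justification that $\phi_i$, which is merely Lipschitz and collapses the horizontal direction, is genuinely a critical point of energy in the equivariant Dirichlet class; this is precisely the harmonicity of collapsing maps recalled in \S2.4, and once it is granted the convexity argument is routine. The non-compactness of $\tilde{A}_i$ poses no difficulty, since the equivariant energy is an integral over a compact fundamental domain.
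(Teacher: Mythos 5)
Your proposal is correct, and in fact it contains the paper's own argument verbatim as your ``equivalently'' clause: the paper's proof is exactly the indirect route --- an energy minimizer for the equivariant Dirichlet problem on $\tilde{A}_i$ exists and is harmonic, harmonic maps to an NPC tree with fixed boundary data are unique (citing \cite{Mese}, in the style of Proposition~\ref{prop: h-i unique}), and since $\phi_i$ is harmonic it must therefore coincide with the minimizer. Your primary route is the direct one: geodesic interpolation $g_t$ from $\phi_i$ to a competitor $g$, convexity of $t\mapsto\mathcal{E}(g_t)$ from the NPC inequality, and nonnegativity of the one-sided derivative at $t=0$ coming from criticality of $\phi_i$. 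Both arguments rest on the same underlying convexity; the difference is that the paper outsources the convexity to the uniqueness theorem and to the existence theory it has already set up for Proposition~\ref{doub}, whereas you run the first-variation-plus-convexity computation by hand. Your version is marginally more self-contained (it does not need the existence of a minimizer, only that $\phi_i$ is critical), while the paper's is shorter because it reuses machinery already in place. The one point you flag as delicate --- that the Lipschitz collapsing map is genuinely a critical point of equivariant energy --- is indeed the load-bearing input in either version, and the paper likewise simply grants it from \S2.4; so there is no gap.
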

\begin{proof}
This follows from the fact $\phi_i$ is a harmonic map to a tree: such a harmonic map with fixed boundary conditions  is unique since the target is negatively curved (see \cite{Mese}). On the other hand, an energy-minimizer exists for the Dirichlet problem (see, for example, the proof sketch for Proposition~\ref{doub}) and is harmonic, hence $\phi_i$ must be the energy-minimizer.
\end{proof}

As in \cite{GW1}, we can now show that Proposition \ref{ubound}  implies that the energy of the solution to the partially free boundary problem is comparable to the restriction of the collapsing map on $A_i$:

\begin{lem}[Energy bounds] \label{lemma:annulus energy comparison}Let $\psi_i$ be the solution to the partially free boundary problem  (\ref{psii}) and $\phi$ the collapsing map in (\ref{coll-phi}). Then we have the following estimate of equivariant energy:
\begin{equation}\label{ebound1}
\mathcal{E}(\psi_i) \leq \mathcal{E}({\phi}\vert_{\tilde{A}_i})  \leq \mathcal{E}(\psi_i) + K
\end{equation}
where $K$ is independent of $i$. 
\end{lem}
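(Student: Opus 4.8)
The plan is to play the two variational characterizations against each other. Recall that $\psi_i$ minimizes equivariant energy over the \emph{large} class of $\mathbb{Z}$-equivariant finite-energy maps constrained only on the lift of $\partial_+ A_i$ (Problem~\ref{problem: PFBVP}), whereas by Lemma~\ref{emin} the collapsing map $\phi_i = \phi\vert_{\tilde{A}_i}$ minimizes over the \emph{small} class constrained on the lifts of both $\partial_+ A_i$ and $\partial_- A_i$. The left-hand inequality is then immediate: since $\phi_i$ satisfies the prescribed condition $\phi\vert_{\widetilde{\partial_+ A_i}}$, it is an admissible competitor for the partially free boundary problem, and hence its energy cannot be below that of the minimizer, giving $\mathcal{E}(\psi_i) \leq \mathcal{E}(\phi\vert_{\tilde{A}_i})$.

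For the right-hand inequality the strategy is to manufacture, out of $\psi_i$, an admissible competitor for $\phi_i$'s two-sided Dirichlet problem at the cost of only a bounded amount of energy. Let $A_i^\delta$ be the fixed $\delta$-collar of the free boundary $\partial_- A_i = \partial X_1$ furnished by Corollary~\ref{cor-20}. I would define a $\mathbb{Z}$-equivariant map $g_i : \tilde{A}_i \to T_U$ by setting $g_i = \psi_i$ on $\tilde{A}_i \setminus \widetilde{A_i^\delta}$, and on the collar $\widetilde{A_i^\delta}$ taking $g_i$ to be the geodesic interpolation in the tree $T_U$ (where geodesics between points exist and are unique) between $\psi_i$ along the inner edge of the collar and the prescribed data $\phi$ along $\widetilde{\partial_- A_i}$, with the radial coordinate of the uniformized collar as interpolation parameter. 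Equivariance is inherited since the $\mathbb{Z}$-action is by isometries and geodesics are preserved by isometries; and by construction $g_i$ agrees with $\phi$ on both boundary lifts, so it is admissible for Lemma~\ref{emin} (the energy-minimizing property being understood in the Korevaar--Schoen class of locally square-integrable maps, consistent with the setup throughout). Writing $\mathcal{E}(\,\cdot\,;\Omega)$ for equivariant energy restricted to a region $\Omega$, one then obtains
\[
\mathcal{E}(\phi_i) \leq \mathcal{E}(g_i) = \mathcal{E}\bigl(\psi_i; \tilde{A}_i \setminus \widetilde{A_i^\delta}\bigr) + \mathcal{E}\bigl(g_i; \widetilde{A_i^\delta}\bigr) \leq \mathcal{E}(\psi_i) + \mathcal{E}\bigl(g_i; \widetilde{A_i^\delta}\bigr),
\]
where the last step uses that the energy integrand is nonnegative, so restricting the domain only decreases the energy.

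It remains to bound $\mathcal{E}(g_i; \widetilde{A_i^\delta})$ by a constant $K$ independent of $i$, and this is the crux of the matter and the main obstacle. The essential input is Corollary~\ref{cor-20}: on the \emph{fixed-geometry} collar $A_i^\delta$ the image $\psi_i(\widetilde{A_i^\delta})$ lies in a ball of $T_U$ of radius independent of $i$, and the weak angular derivatives of $\psi_i$ there are uniformly bounded. Since the boundary data $\phi\vert_{\widetilde{\partial_- A_i}}$ are fixed (hence of bounded image and bounded angular derivative), the radial derivative of the geodesic interpolation is controlled pointwise by the tree distance $d_{T_U}\bigl(\psi_i(\cdot), \phi(\cdot)\bigr)$, which is uniformly bounded, while its angular derivative is controlled by the angular derivatives of the two boundary maps. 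Integrating the resulting bounded energy density over the fixed collar yields $\mathcal{E}(g_i; \widetilde{A_i^\delta}) \leq K$ with $K$ independent of $i$, completing the proof. The subtlety worth emphasizing is exactly the one flagged in the proof of Proposition~\ref{ubound}: although the far-boundary data $c_i$ of $\psi_i$ grows with $i$, the uniform control already secured at the \emph{free} boundary by Corollary~\ref{cor-20}, together with the fixed modulus of the collar, is precisely what keeps the interpolation cost bounded.
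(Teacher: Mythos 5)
Your argument is correct and is essentially the paper's own proof: the left inequality from admissibility of $\phi\vert_{\tilde A_i}$ in the partially free problem, and the right inequality by modifying $\psi_i$ on the fixed $\delta$-collar of the free boundary via interpolation in the tree, with the uniform distance bound (Proposition~\ref{ubound}) controlling the radial stretch and Corollary~\ref{cor-20} controlling the angular stretch. The paper phrases the collar modification as a ``linear interpolation,'' which in the $\R$-tree target is exactly the geodesic interpolation you describe, so there is no substantive difference.
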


\begin{proof}

The first inequality follows from the energy-minimizing property of $\psi_i$. The second inequality follows by noting that \\
(a) the collapsing map $\phi\vert_{\tilde{A}_i}$  solves its own energy-minimizing problem given its boundary values (Lemma \ref{emin}), and\\
(b) we can construct a candidate map for this minimizing problem by adjusting $\tilde{\psi}_i$ (at a uniformly bounded cost of energy) such that the boundary values on the lift of the free boundary $\partial_- A_i$  agree with those of $\phi$.

We provide some more details of (b):\\
Note that the image of $\tilde{\psi}_i\vert_{\widetilde{\partial_-A}_i}$ is uniformly bounded by Proposition \ref{ubound},  and consequently also a uniformly bounded distance from the image of $\phi\vert _{\widetilde{\partial X_1}}$, which is a fixed compact set up to the $\mathbb{Z}$-action. 

Moreover, Corollary \ref{cor-20} asserts that the restriction of $\tilde{\psi}_i$ to the lift of a  a $\delta$-collar neighbourhood of the free boundary $\partial_-A_i$, has uniform bounds on the angular derivative.

The adjustment of the map $\psi_i$ can be then described as follows:\\
Choose the $\delta$- collar neighborhood $A_i^\delta$ of the free boundary $\partial_-A_i$. 
The modification of $\psi_i$ is then by a linear interpolation across the lift of the collar, to achieve the boundary values of $\phi$ at the lift of $\partial_- A_i$. The uniform bound on distance ensures that the stretch in the longitudinal direction is uniformly bounded, and the uniform bound on angular derivatives ensures that so is the stretch in the meridional direction. Hence the interpolating map on the collar has a uniformly bounded energy $K$, independent of the index $i$. 
\end{proof}

This gives, in particular, a lower bound to the equivariant energy of $h_i$ restricted to a  lift of $A_i$:

\begin{cor} The harmonic map $h_i$ defined in \S4.1 satisfies 
\begin{equation}\label{lbd}
\mathcal{E}(h_i\vert_{\tilde{A}_i})  \geq  \mathcal{E}(\psi_i) \geq \mathcal{E}(\phi\vert_{\tilde{A}_i}) - K
\end{equation}
where $\psi_i$ and $\phi$ are as in the above Lemma. 
\end{cor}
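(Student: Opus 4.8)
The plan is to obtain both inequalities from results already in hand, with essentially no new analysis. The right-hand inequality is nothing more than a rearrangement of Lemma~\ref{lemma:annulus energy comparison}: that lemma asserts $\mathcal{E}(\phi\vert_{\tilde{A}_i}) \le \mathcal{E}(\psi_i) + K$ with $K$ independent of $i$, which reads verbatim as $\mathcal{E}(\psi_i) \ge \mathcal{E}(\phi\vert_{\tilde{A}_i}) - K$. So the only thing left to prove is the left-hand inequality $\mathcal{E}(h_i\vert_{\tilde{A}_i}) \ge \mathcal{E}(\psi_i)$.

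For this I would exhibit the restriction $h_i\vert_{\tilde{A}_i}$, after a harmless projection, as an admissible competitor in the partially free boundary value problem (Problem~\ref{problem: PFBVP}) whose unique minimizer is $\psi_i$. Three things must be checked. First, $h_i\vert_{\tilde{A}_i}$ is equivariant for the cyclic group $\mathbb{Z} = \langle \partial U\rangle$ that stabilizes the lift $\tilde{A}_i$, being the restriction of a $\pi_1(X)$-equivariant map. Second, it realizes the required Dirichlet datum on the pole-side boundary: by boundary condition (I) of \S\ref{sec: Harmonic map from surface-with-boundary}, $h_i$ agrees there with the model map $\phi$, which is exactly the constraint $\phi\vert_{\widetilde{\partial_+ A_i}}$ of Problem~\ref{problem: PFBVP}; on the free boundary $\partial_- A_i$ no constraint is imposed, so the values $h_i$ happens to take there are irrelevant. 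Third, the competitor class of Problem~\ref{problem: PFBVP} consists of maps into the subtree $T_U$, so if $h_i\vert_{\tilde{A}_i}$ does not already land in $T_U$ I would post-compose with the nearest-point projection $\Pi\colon T \to T_U$; since any subtree of an $\mathbb{R}$-tree is convex, $\Pi$ is $1$-Lipschitz and hence energy non-increasing, and since the prescribed boundary values already lie in $T_U$ it preserves them. Thus $\Pi\circ h_i\vert_{\tilde{A}_i}$ is a genuine competitor, and minimality of $\psi_i$ gives $\mathcal{E}(h_i\vert_{\tilde{A}_i}) \ge \mathcal{E}(\Pi\circ h_i\vert_{\tilde{A}_i}) \ge \mathcal{E}(\psi_i)$, completing the chain.

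I do not anticipate a serious obstacle here: the genuinely analytic work, namely controlling the image and the angular derivatives of $\psi_i$ on the free boundary and paying only a bounded energy cost $K$ to slide its boundary values onto those of $\phi$, has already been carried out in Proposition~\ref{ubound}, Corollary~\ref{cor-20}, and Lemma~\ref{lemma:annulus energy comparison}, and is packaged into the constant $K$, so it does not recur. The single point that requires care is the boundary bookkeeping under restriction: one must confirm that the boundary component of $A_i$ carrying the Dirichlet condition (I) for $h_i$ is indeed the component $\partial_+ A_i$ on which Problem~\ref{problem: PFBVP} prescribes data, and that the relevant equivariance group is the cyclic stabilizer of $\tilde{A}_i$ rather than the full surface group. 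Once these identifications are in place, the statement is a pure minimality comparison.
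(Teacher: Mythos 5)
Your proof is correct and follows essentially the same route as the paper: the second inequality is just a restatement of the second inequality of Lemma~\ref{lemma:annulus energy comparison}, and the first follows from the minimizing property of $\psi_i$ once $h_i\vert_{\tilde{A}_i}$ is recognized as an admissible competitor for Problem~\ref{problem: PFBVP}. Your additional step of post-composing with the $1$-Lipschitz nearest-point projection onto the convex subtree $T_U$ is a detail the paper leaves implicit, and you handle it correctly.
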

\begin{proof}
The first inequality is from the energy-minimizing property of the solution to the ``partially-free-boundary problem" $\psi_i$, and the second inequality is from the second inequality in (\ref{ebound1}).
\end{proof}

The technique of \cite{Wolf3} then provides a uniform energy bound of $h_i$ to compact sets :

\begin{lem}\label{lemma: energy upper bound} For any compact set $Z \subset X$, the restriction of $h_i$ to a lift of $Z$ satisfies 
\begin{equation*}
\mathcal{E}(h_i\vert_{\tilde{Z}}) \leq  C
\end{equation*}
where $0<C<\infty$ depends only on $Z$ (and is independent of $i$). 
\end{lem}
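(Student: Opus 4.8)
The plan is to run the standard exhaustion-and-comparison argument of \cite{Wolf3}: bound the \emph{total} equivariant energy of $h_i$ from above by that of the collapsing map $\phi$, and then subtract off the energy concentrated in the annular collar near the pole, which we have already bounded \emph{below} by that of $\phi$ up to a fixed constant. The two estimates combine so that the contributions along the collar cancel, leaving an $i$-independent bound on the compact core.

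First I would observe that $h_i$ is, by its construction in \S\ref{sec: Harmonic map from surface-with-boundary}(a), the equivariant energy minimizer among maps $\widetilde{X_i}\to T_i$ agreeing with $\phi$ on the lifts of $\partial X_i$. Since $\phi|_{\widetilde{X_i}}$ is itself such a map---it is $\pi_1(X)$-equivariant, has finite equivariant energy on the lift of the compact surface $X_i$, maps into the truncation $T_i$ (being the collapsing map of $F^\prime$ restricted to $\widetilde{X_i}$), and trivially satisfies the boundary condition---it is an admissible competitor. Hence
\[
\mathcal{E}(h_i|_{\widetilde{X_i}}) \leq \mathcal{E}(\phi|_{\widetilde{X_i}}).
\]

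Next I would split the domain along $\partial X_1$, writing $A = X_i\setminus X_1$ for the annular region abutting the pole and using additivity of energy:
\[
\mathcal{E}(h_i|_{\widetilde{X_i}}) = \mathcal{E}(h_i|_{\widetilde{X_1}}) + \mathcal{E}(h_i|_{\tilde{A}}), \qquad \mathcal{E}(\phi|_{\widetilde{X_i}}) = \mathcal{E}(\phi|_{\widetilde{X_1}}) + \mathcal{E}(\phi|_{\tilde{A}}).
\]
The corollary stated after Lemma~\ref{lemma:annulus energy comparison}, namely the bound \eqref{lbd}, supplies the lower estimate $\mathcal{E}(h_i|_{\tilde{A}}) \geq \mathcal{E}(\phi|_{\tilde{A}}) - K$ with $K$ independent of $i$. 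Substituting these three relations into the preceding inequality and cancelling the common annular term gives
\[
\mathcal{E}(h_i|_{\widetilde{X_1}}) \leq \mathcal{E}(\phi|_{\widetilde{X_1}}) + K,
\]
a quantity depending only on the fixed data $\phi|_{\widetilde{X_1}}$ and $K$, not on $i$. For a general compact $Z \subset X$, which is bounded away from the pole, I would choose $m$ with $Z \subset X_m$ and, for all $i>m$, repeat the identical accounting with $\partial X_m$ as the inner boundary: the annular lower bound holds verbatim, since the decay estimate of Lemma~\ref{decay} and Proposition~\ref{ubound} is insensitive to the precise truncation level inside the sink neighborhood. Then, by monotonicity of energy,
\[
\mathcal{E}(h_i|_{\tilde{Z}}) \leq \mathcal{E}(h_i|_{\widetilde{X_m}}) \leq \mathcal{E}(\phi|_{\widetilde{X_m}}) + K =: C,
\]
which depends only on $Z$.

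The conceptual content, and the only genuinely hard input, is the annular lower bound $\mathcal{E}(h_i|_{\tilde{A}}) \geq \mathcal{E}(\phi|_{\tilde{A}}) - K$, which rests on the partially-free-boundary analysis and the exponential-decay estimate established above; granting that, the present lemma is pure energy bookkeeping. The main points to verify carefully are therefore that $\phi|_{\widetilde{X_i}}$ is a legitimate finite-energy competitor landing in $T_i$ with the correct boundary values, and that the additivity splitting is applied to the \emph{same} annulus in both the upper and lower estimates, so that the collar contributions cancel exactly rather than leaving an $i$-dependent remainder.
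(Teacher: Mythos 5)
Your proof is correct and follows essentially the same route as the paper's: an upper bound for $\mathcal{E}(h_i)$ from an explicit competitor for the Dirichlet problem that $h_i$ solves, combined with the annular lower bound \eqref{lbd} so that the collar contributions cancel, then the replacement of $X_1$ by $X_m \supset Z$ for general compact $Z$. The only (immaterial) difference is that the paper's competitor restricts to the fixed harmonic map $h_1$ on $\widetilde{X_1}$ rather than to $\phi$ there, which changes only the value of the final constant.
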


\begin{proof}
Consider the candidate map $g_i$ for the energy-minimizing problem that $h_i$ solves, which restricts to $h_1$ on  the lifts of $X_1$, and to the collapsing map $\phi$ on the lifts of  $A_i$. Note that by construction, these two maps $h_i$ and $\phi$ agree on the lifts of the  common boundary $\partial X_1$; their derivatives may fail to match, but since the curve $\partial X_1$ is real-analytic, the measure of this set of non-differentiability is zero so the map $g$ has locally square-integrable weak derivatives. 

Then for all $i\geq 1$, we have
\begin{equation*}
\mathcal{E}(h_i\vert_{\tilde{X}_1})  + \mathcal{E}(h_i\vert_{\tilde{A}_i})  =  \mathcal{E}(h_i) \leq  \mathcal{E}(g_i)  =  \mathcal{E}(h_1)  + \mathcal{E}(\phi\vert_{\tilde{A}_i})
\end{equation*}

Combining with (\ref{lbd})  then yields 
\begin{equation*}
\mathcal{E}(h_i\vert_{\tilde{X}_1}) \leq K + \mathcal{E}(h_1)
\end{equation*}
where the right-hand-side is independent of $i$.

A similar argument then yields uniform energy bounds on the lift of \textit{any} compact subset $Z$ of $X$: namely,  in the preceding argument, we replace the compact subsurface $X_1$ by a compact subsurface $X_m$ (for some $m>1$) in the exhaustion that contains $Z$. 
\end{proof}

We are now equipped to prove:

\begin{prop} \label{prop: h_i convergence}  
The harmonic maps $h_i:\tilde{X}_i \to T_i$ subconverge, uniformly on compacta, to a harmonic map $h:\tilde{X}\to T$.
\end{prop}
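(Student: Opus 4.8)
The plan is to combine the uniform energy bound on compacta with the Courant--Lebesgue equicontinuity estimate, to trap the images of points in compact subsets of $T$ by an \emph{ad hoc} argument adapted to $\R$-tree targets, and then to extract a limit by Ascoli--Arzela, checking at the end that the limit is harmonic. First I would record the two ingredients already in hand. By Lemma~\ref{lemma: energy upper bound}, for every compact $Z \subset X$ the equivariant energies $\mathcal{E}(h_i|_{\tilde Z})$ are bounded independently of $i$; once $i$ is large enough that $Z \subset X_i$, the map $h_i$ is defined on $\tilde Z$. Feeding this energy bound into Lemma~\ref{cour} yields that, on the lift of any fixed compactum, the tail of the family $\{h_i\}$ is equicontinuous. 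By the remark following Lemma~\ref{cour}, to conclude subconvergence via Ascoli--Arzela it then remains only to show that for each $x \in \tilde X$ the set $\{h_i(x)\}_i$ lies in a compact subset of $T$.

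The main obstacle is precisely this last point: the tree $T$ is typically not locally compact, so a diameter bound on $\{h_i(x)\}$ does not by itself give precompactness, since the convex hull of a bounded set need not be compact. To overcome it I would run the bootstrapping argument sketched in \S\ref{sec: Harmonic map from surface-with-boundary}(a), modeled on Lemmas~3.3 and~3.4 of \cite{Wolf2}. Equivariance lets each $\gamma \in \pi_1(X)$ act on $T$ by an isometry translating along an axis; the NPC property forces distinct axes to overlap only along compact subsegments before diverging, and together with the equicontinuity this produces a uniform bound on $d_T(h_i(x), h_M(x))$ for $M > i$ and $x$ in a fixed compactum. One then starts from a single point $x_0$ whose images are controlled --- the image of the lift $\tilde B$ of a nontrivial loop $B$ must meet the axis of $[B]$, which pins $\{h_i(x_0)\}$ into a compact segment --- and successively promotes this control to the singularities of the lifted foliation, to the arcs joining limiting singular points, and finally to the two-cells that these arcs bound. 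The one adaptation needed to the present, non-closed setting is that the images of $\partial X_i$ are fixed by the boundary condition (they equal $\phi$ there), so they already lie in a fixed compact subset of $T$; the bootstrapping can therefore be anchored by this fixed boundary data together with the axes of interior curves, and on each fixed compactum the relevant image sets are trapped in a common compact subtree.

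With the images trapped, I would exhaust $\tilde X$ by an increasing sequence of compacta, apply Ascoli--Arzela on each, and run a diagonal argument to extract a subsequence converging uniformly on compacta to a map $h:\tilde X \to T$. Finally I would verify that $h$ is harmonic: the uniform bounds of Lemma~\ref{lemma: energy upper bound} pass to the limit by lower semicontinuity of energy under uniform convergence, and uniform limits of equivariant harmonic maps to NPC targets with locally bounded energy are again harmonic. The cleanest way to see the last point is via the characterization recalled in \S2.4, that germs of convex functions on $T$ pull back to germs of subharmonic functions, which is a closed condition under uniform convergence; equivariance of $h$ under $\pi_1(X)$ is immediate from the equivariance of the $h_i$. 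I expect the trapping of the images in a compact subtree to be the crux of the proof; the equicontinuity, the diagonal extraction, and the harmonicity of the limit are comparatively routine once that is established.
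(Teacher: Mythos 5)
Your outline agrees with the paper's up to the decisive step, and the authors themselves remark at the end of their proof that ``much of this argument could be replaced by a modification of the (slightly longer) proof of Lemma~3.4 in \cite{Wolf2}'' --- so the Wolf-style bootstrapping route you propose is in principle admissible. However, as written your execution of the trapping step has a genuine gap. You anchor the bootstrap on the claim that ``the images of $\partial X_i$ are fixed by the boundary condition (they equal $\phi$ there), so they already lie in a fixed compact subset of $T$.'' That statement is true in the setting of \S\ref{sec: Harmonic map from surface-with-boundary}(a), where one fixes $i$ and runs a minimizing sequence $h_i^m$ on the \emph{fixed} compact domain $\widetilde{X_i}$ with \emph{fixed} boundary values; but it is false for the sequence $h_i$ itself. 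Here the boundaries $\partial X_i$ recede into the pole, and their images $\phi(\widetilde{\partial X_i})$ escape to infinity along the prongs of $T_U$ (the transverse measure of an arc to the pole is infinite), so there is no fixed compact subset of $T$ containing them. Likewise, the ``uniform bound on $d_T(h_i(x),h_M(x))$'' you invoke is obtained in \cite{Wolf2} for a minimizing sequence of a \emph{single} variational problem (via convexity of energy along geodesic homotopies between competitors with the same boundary data); the $h_i$ solve \emph{different} Dirichlet problems on growing domains, so that estimate does not transfer without a new argument. This is precisely the difficulty the paper's remark after Lemma~\ref{cour} flags as requiring \emph{ad hoc} methods.

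The paper circumvents this as follows: it first shows that the Hopf differentials $\Phi_i = \Hopf(h_i)$ subconverge uniformly on compacta to a nontrivial holomorphic differential $\Phi$ (Lemmas~\ref{lemma: Hopf diffs converge} and \ref{lemma: Hopf differential non-trivial}, using only the $L^1$ energy bounds and the Cauchy integral formula, with no need to control images in $T$). Consequently, on a compact $Z$ containing all zeroes of $\Phi_i$ for large $i$, the leaf spaces $T_i^Z$ are eventually all $(1+\epsilon)$-quasi-isometric finite graphs embedding in $T$. The trapping then comes from a continuity-plus-discreteness argument: interpolating the exhaustion $X_i$ into a continuous family $X_t$, using the uniqueness of the equivariant Dirichlet solutions (Proposition~\ref{prop: h-i unique}) to show $h_t$ varies continuously, and observing that the vertex set of $T$ is discrete, so the images of the vertices of $T_i^Z$ in $T$ are eventually constant; hence the images of a fundamental domain lie in a fixed compact subtree. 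If you wish to keep your route instead, you must replace the boundary anchor with one that survives $i\to\infty$ --- e.g.\ anchoring on the axes of interior elements of $\pi_1(X)$, whose translation lengths are fixed by $F$, and exploiting that arcs between singular points map to geodesic segments --- and you must supply a substitute for the cross-index distance bound; neither is automatic from what you have written.
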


We prove this in two steps: first, we show that the uniform energy bound of  Lemma \ref{lemma: energy upper bound} implies that the \textit{Hopf differentials} $\Phi_i$ of the harmonic maps $h_i$ sub-converge on compacta.  The harmonic maps $h_i$ are then collapsing maps of the foliations induced by $\Phi_i$;  along the convergent subsequence we have control on their leaf-spaces  $T_i$ that are sub-trees of the fixed $\mathbb{R}$-tree $T$. Since $T$ is not locally compact, we need a further topological argument to show that in fact, the images of a fundamental domain under $h_i$ lie in a \textit{compact} subset of $T$. This last fact then ensures the sub-convergence of the sequence $h_i$ by an application of the Arzela-Ascoli theorem.

\vskip.2in

 In the sequel, we index our subsequence of maps as if they were a sequence, simply to avoid typographical complexity. Let $\Phi_i= \Hopf(h_i)$ denote the Hopf differential of the map $h_i$.

\begin{lem} \label{lemma: Hopf diffs converge}
The Hopf differentials $\Phi_i$ subconverge, uniformly on compacta, to a holomorphic differential $\Phi$ on $X$.
\end{lem}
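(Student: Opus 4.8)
The plan is to convert the uniform energy bounds of Lemma~\ref{lemma: energy upper bound} into uniform $C^0$-bounds on the Hopf differentials over compacta, and then to invoke the normal-families (Montel) theorem for holomorphic objects. First I would record that each $\Phi_i$ is a genuine holomorphic quadratic differential on $X_i$: although the target $T_i$ is a singular NPC space, it is standard for harmonic maps to $\R$-trees (equivalently to NPC targets, via Korevaar--Schoen \cite{KorSch1}; see also \cite{Wolf2}) that the Hopf differential $\Phi_i = -4(\partial_z h_i)^2\,dz^2$ is holomorphic, with the branch points of $h_i$ occurring precisely at its isolated zeroes. Since $h_i$ is equivariant, $\Phi_i$ descends to $X_i$; as the $X_i$ exhaust $X$, any fixed compact $K\subset X$ satisfies $K\subset X_i$ for all large $i$, so it suffices to obtain bounds on such $K$.

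Next I would establish the pointwise comparison between the Hopf differential and the energy density. One has the almost-everywhere inequality $|\Phi_i| \leq C\, e(h_i)$, where $e(h_i)=\lVert dh_i\rVert^2$ is the energy density and $C$ is universal (indeed $|\Phi_i| = 4|\partial_z h_i|^2|dz|^2 \leq C\lVert dh_i\rVert^2$). Integrating over a compact set $K$ and applying Lemma~\ref{lemma: energy upper bound} gives
\begin{equation*}
\int_{K} |\Phi_i| \;\leq\; C\,\mathcal{E}(h_i\vert_{\tilde{K}}) \;\leq\; C',
\end{equation*}
with $C'$ independent of $i$. Thus, writing $\Phi_i = \phi_i(z)\,dz^2$ in a local chart, the holomorphic functions $\phi_i$ are uniformly bounded in $L^1_{\mathrm{loc}}$.

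The key upgrade is from $L^1$ to $L^\infty$: since $\phi_i$ is holomorphic, $|\phi_i|$ is subharmonic and satisfies the sub-mean-value inequality $|\phi_i(z_0)|\leq \frac{1}{\pi r^2}\int_{D(z_0,r)}|\phi_i|$. Choosing, for a given compact $K'\subset X$, a slightly larger compact neighborhood $K$ with $\overline{K'}\subset \mathrm{int}(K)$ and $\overline{K}$ contained in $X$, together with a uniform radius $r$, the $L^1$-bound above yields $\sup_{K'}|\phi_i|\leq C''$ independent of $i$. I would then apply Montel's theorem: the family $\{\phi_i\}$ is locally uniformly bounded, hence normal, so a subsequence converges uniformly on compacta to a holomorphic function $\phi$ on $X$, and the corresponding $\Phi=\phi\,dz^2$ is the desired holomorphic quadratic differential; a diagonal argument over an exhaustion of $X$ produces a single subsequence converging on all of $X$.

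The main obstacle will be the first step — justifying the holomorphicity of $\Phi_i$ and the pointwise bound $|\Phi_i|\leq C\,e(h_i)$ in the singular, tree-valued setting, since $h_i$ is only Lipschitz and the target has branch points. However, this is precisely the content of the harmonic-map-to-NPC-space theory cited above, in which the Hopf differential is shown to be (weakly, hence genuinely) holomorphic and its modulus controlled by the energy density almost everywhere. Once this input is in hand, the remaining steps are the standard normal-families argument for holomorphic functions.
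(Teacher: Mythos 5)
Your proof is correct and follows essentially the same route as the paper: both derive a uniform local $L^1$ bound on the $\Phi_i$ from the energy bound of Lemma~\ref{lemma: energy upper bound} and then upgrade this to locally uniform $C^0$ bounds before extracting a convergent subsequence by normal families. The only (cosmetic) difference is that you pass from $L^1$ to $L^\infty$ via the sub-mean-value inequality for the subharmonic function $\lvert \phi_i \rvert$ and quote Montel, whereas the paper uses Fubini to find a good circle of definite radius and applies the Cauchy integral formula to bound $\Phi_i$ and its derivative before invoking Ascoli--Arzela.
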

 \begin{proof}
 Choose a compact set $Z \subset X$.  Then Lemma~\ref{lemma: energy upper bound} implies that the total energy $\mathcal{E}(h_i) < C(Z)$ for some constant $C(Z)$ depending on $Z$. But the $L^1$ norm $\|\Phi_i\vert_Z\|_{L^1} = \mathcal{E}(h_i\vert_Z) < C(Z)$, and so the restriction  $\Phi_i\vert_Z$ of the Hopf differentials $\Phi_i$ to the compact set $Z$ are uniformly bounded in norm.  Since $Z$ has injectivity radius bounded from below, we can find balls around every point in $Z$ of uniform size so that around such a point, there is an annulus $A(r, R)$ of inner radius $r \geq \delta$ bounded away from zero on which $\Phi_i$ has uniformly bounded $L^1$ norm.  By Fubini's theorem, we then find, for each point $z \in Z$, a circle $\mathcal{C}$ around $z$ of radius at least $\delta >0$ on which $\int_{\mathcal{C}} |\Phi_i| < C_1(Z)$ is uniformly bounded.  But then the Cauchy integral formula uniformly bounds $|\Phi_i(z)| < C_2(Z)$ and $|\frac{\partial}{\partial z}\Phi_i(z)| < C_3(Z)$.
 
 Thus $\Phi_i\vert_Z$ is a sequence of uniformly bounded holomorphic differentials on a compact set, which subconverge by Ascoli-Arzela. A diagonal argument then gives subconvergence on $X$, as required.
 \end{proof} 

 \begin{lem} \label{lemma: Hopf differential non-trivial}
 The Hopf differential $\Phi$ does not vanish identically on $X$.	
 \end{lem}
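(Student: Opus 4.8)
The plan is to show that the limiting Hopf differential $\Phi$ cannot vanish identically by establishing a uniform \emph{lower} bound on the energy of the maps $h_i$ over some fixed compact region, and then observing that identical vanishing of $\Phi$ would force the total energy on that region to be zero in the limit, a contradiction. The key is that all of the machinery for uniform energy \emph{upper} bounds has a counterpart that prevents the energy from collapsing to zero, because the foliation $F^\prime$ we are realizing is non-trivial: it has strips of positive transverse measure (or, at worst, a non-degenerate arrangement of half-planes) recorded in the local parameters at the pole and in the Dehn--Thurston data on $S\setminus U$.

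First I would fix a compact subsurface $Z \subset X$ on which the foliation $F^\prime$ has a non-trivial transverse measure --- concretely, choose $Z$ to contain an essential simple closed curve $\gamma$ (or an arc between boundary components of $X_1$) whose transverse measure $\mu(\gamma) > 0$ with respect to $F^\prime$. Such a curve exists as long as $[F] \in \mathcal{MF}(n)$ is not the degenerate ``half-plane'' foliation $F_0$ of the Example following Proposition~\ref{comb}; in that exceptional case one instead uses the strip across which the transverse measure is positive, whose existence is guaranteed by the local parameters being part of the data. Next I would recall that for each $i$, the harmonic map $h_i$ collapses along a foliation measure-equivalent to $\widetilde{F^\prime}\vert_{\widetilde{X_i}}$ (this is condition (II) in \S\ref{sec: Harmonic map from surface-with-boundary}, established via the Skora-type rigidity). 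Hence the translation length of the isometry $h_{i*}([\gamma])$ on $T$ equals $\mu(\gamma) > 0$, a fixed positive number independent of $i$.

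The third step turns this fixed translation length into a fixed positive lower bound on energy over the lift $\tilde{Z}$. Since $h_i$ is equivariant and $h_{i*}([\gamma])$ translates by $\mu(\gamma)>0$ along its axis, the image $h_i(\tilde\gamma)$ of a lift $\tilde\gamma$ of $\gamma$ must traverse a segment of length at least $\mu(\gamma)$ in $T$; integrating the energy density along $\tilde\gamma$ and using the Cauchy--Schwarz inequality $\bigl(\int_{\tilde\gamma}\lVert dh_i\rVert\bigr)^2 \leq \length(\tilde\gamma)\cdot \int_{\tilde\gamma}\lVert dh_i\rVert^2$ gives a lower bound on the energy of $h_i$ restricted to any tubular neighborhood of $\tilde\gamma$ inside $\tilde Z$, uniform in $i$. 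Now suppose for contradiction that $\Phi \equiv 0$ on $X$. Then by the convergence of Lemma~\ref{lemma: Hopf diffs converge}, the Hopf differentials $\Phi_i \to 0$ uniformly on $Z$, and since $\lVert \Phi_i\vert_Z\rVert_{L^1} = \mathcal{E}(h_i\vert_{\tilde Z})$, the energies $\mathcal{E}(h_i\vert_{\tilde Z}) \to 0$. This directly contradicts the uniform positive lower bound just obtained.

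\textbf{The main obstacle} I anticipate is making the lower-bound step robust across the two geometric regimes that Proposition~\ref{prop:pk} distinguishes: the case where $\partial U$ (equivalently, a relevant curve or strip) has positive transverse measure, versus the degenerate ``half-plane'' case where all simple closed curves have vanishing measure. In the latter regime there is no non-peripheral $\gamma$ with $\mu(\gamma)>0$, and one must instead argue that the prongs of the foliation force $h_i$ to spread a fixed amount of energy near the critical graph --- i.e., that the map cannot be constant because its boundary data on $\partial X_1$ (inherited from $\phi$, the collapsing map for $P^2dz^2$) already has positive energy that persists in the limit. Concretely, in that case I would choose $Z \supset X_1$ and invoke the lower bound $\mathcal{E}(h_i\vert_{\tilde{A}_i}) \geq \mathcal{E}(\phi\vert_{\tilde A_i}) - K$ from Corollary following Lemma~\ref{lemma:annulus energy comparison}, together with the fact that $\phi$ itself has strictly positive energy because $P$ is a genuine (non-constant) principal part, so the collapsing map for $P^2dz^2$ is non-constant near the pole. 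Pinning down that $\mathcal{E}(h_i\vert_{\tilde Z})$ stays bounded below even as the contributing region drifts toward the puncture is the delicate point, and it is exactly where the positivity of the prescribed principal part $P$ (which is built into the hypotheses via compatibility) must be used.
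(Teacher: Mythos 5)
Your main argument is, at its core, the same as the paper's: both proofs hinge on choosing $\gamma \in \pi_1(X)$ acting on $T$ with positive translation length $\ell(\gamma)>0$, so that equivariance forces $d_T(h_i(p), h_i(\gamma\cdot p)) \geq \ell(\gamma)$ for all $i$, while the uniform smallness of $\Phi_i$ on a compact set containing $p$ and $\gamma\cdot p$ forces this displacement to be small. The paper contradicts the \emph{distance} directly (the distance in $T$ is at most the transverse $\Phi_i$-measure of an arc from $p$ to $\gamma\cdot p$, hence controlled by $\sup_Z\lvert\Phi_i\rvert^{1/2}$ times the length of the arc); you contradict the \emph{energy}, which requires the extra Cauchy--Schwarz step, an integration over a tubular neighborhood of $\tilde\gamma$, and the identity $\lVert\Phi_i\vert_Z\rVert_{L^1}=\mathcal{E}(h_i\vert_{\tilde Z})$ already used in Lemma~\ref{lemma: Hopf diffs converge}. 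Both routes are valid; the paper's is marginally more direct.

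Where you genuinely depart from the paper is in worrying about whether such a $\gamma$ exists. The paper simply asserts it (``guaranteed by our construction of the tree $T$ as the leaf space of a measured foliation''), and you are right to be suspicious: for the half-plane foliations described in the example at the end of \S3, every essential simple closed curve has vanishing transverse measure, the lift of the critical spine maps to a single point of $T$ fixed by all of $\pi_1(X)$, and no hyperbolic element exists. However, your proposed repair for that regime does not close the gap. First, the ``strip of positive transverse measure'' you invoke is unavailable precisely in the half-plane case, where the paper notes there are no strips and all local parameters vanish. Second, the bound $\mathcal{E}(h_i\vert_{\tilde A_i}) \geq \mathcal{E}(\phi\vert_{\tilde A_i}) - K$ from Lemma~\ref{lemma:annulus energy comparison} lives on the growing annuli $A_i$, not on a fixed compact set, so it is consistent with $\mathcal{E}(h_i\vert_{\tilde Z})\to 0$ for every fixed compact $Z$ (the energy could a priori drift entirely toward the puncture) --- a point you yourself concede. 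The net effect is that your argument establishes the lemma in exactly the same generality as the paper's (namely, whenever the $\pi_1(X)$-action on $T$ admits a hyperbolic element), and in the degenerate case both proofs are incomplete as written; the difference is that you flag the issue while the paper does not.
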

 
 \begin{proof}
 If the Hopf differential were to vanish identically, then the approximating differentials $\Phi_i$ would, on each compact set, be uniformly small (for large enough $i$).  However, the distance $\dist_T(h_i(p),h_i(q))$ between image points $h_i(p)$ and 	$h_i(q)$ is bounded below by the horizontal $\Phi_i$-measure of an arc between $p, q \in \tilde{X}$.  
 
 So choose an element $\gamma \in \pi_1(X)$ so that $\gamma$ acts on the tree $T$ by a non-trivial translation along an axis; such an element $\gamma$ is guaranteed by our construction of the tree $T$ as the leaf space of a measured foliation (so that the action of $\pi_1(X)$ on $T$ is small). In particular, we find a $\delta>0$ so that $\dist_T(h_i(p),h_i(\gamma \cdot p)) > \delta$ for any choice of index $i$.  But as $p$ and $q= \gamma \cdot p$ live in some compact set $Z \subset \tilde{X}$, we see from the first paragraph that we may choose $i$ so large that $\dist_T(h_i(p),h_i(q)) < \delta$, a contradiction.
 \end{proof}

\begin{proof}[Proof of Proposition \ref{prop: h_i convergence} ]

By Lemma ~\ref{lemma: Hopf diffs converge},  the Hopf differentials $\Phi_i$ converge to a holomorphic quadratic differential $\Phi$ on the punctured surface $X$. 

By Lemma~\ref{lemma: Hopf differential non-trivial}, the zeroes of such a non-trivial $\Phi$ are isolated, and we see the following: 

For any compact set $Z \subset X$ on which $\Phi$ has no zeroes on $\partial Z$ there is an index $I$ so that for $i>I$, we have that all of the zeroes of $\Phi_i$ lie inside $Z$. 
In addition, the foliations of $\Phi_i$ are uniformly close. Thus we see that, for any fixed fundamental domain $F_Z \subset \widetilde{X \setminus Z}$, the leaf spaces $T_i^Z$ of $\Phi_i\vert_Z$ are all $(1+\epsilon)$-quasi-isometric, in the sense that they are all the same finite topological graph, with edge lengths that are nearly identical.  We will assume that our compact set $Z$ includes the set $X_1$, so that its complement is a subset of the cylinder $X \setminus X_m $ for some $m$.
 
 Now consider a fundamental domain $T^*$ for the action of $\pi_1(X)$ on the tree $T$.  It is also a graph with a finite number of vertices, each of bounded valence, with some infinitely long prongs corresponding to the poles, and otherwise finite length edges.  Because each $\Phi_i$ arises from a solution $h_i$ to an appropriate Dirichlet problem with the horizontal foliation describing the level sets of the maps, each of our trees $T_i^Z$ admits an isometric embedding into $T^*$ (with the truncated prongs being taken into semi-infinite prongs), up to some possible small trimming near the boundary points.\\

 \noindent \emph{Claim. The images of the vertices of $T_i^Z$ in $T^*$ are constant in the index $i$, for $i$ large enough. }
 
 \textit{Proof of Claim.} 
  As a preparatory observation, note that the vertices of the tree $T$ are discrete, in the sense that all maps of continua into the vertex set are constant: this is because the tree $T$ is dual to the measured foliation $[F]$ with which we began this existence proof, and each vertex corresponds to a singular point of the foliation, of which there are but countably many.
 
 To see that the images of the vertices of $T_i^Z$ in $T^*$ are constant in the index $i$, we remark on the construction (section~\ref{sec: Harmonic map from surface-with-boundary}) of the solutions $h_i$ to the equivariant Dirichlet problem on the compact domain $X_i$.  Naturally, since it was formed from a compact exhaustion of $X$, the space $X_i$ may be extended to be a sequence $X_i \subset X_t$ in a continuous family $X_t$ which exhausts $X$; for example, we could take $X_t$ to just be the sublevel set of distance $t$ from some point $p_0 \in X$. Note that by the same proofs for the existence and uniqueness of $h_i$ (again section~\ref{sec: Harmonic map from surface-with-boundary}), we obtain the existence and the uniqueness of a solution $h_t$ to the equivariant Dirichlet problem on $X_t$. 
 
 One can see that these solutions $h_t$ are a continuous family of maps:  Note that each has a holomorphic Hopf differential, and all of the differentials are bounded in $L^1$ on the common domain of definition (say $X_{t_0}$, if $t$ decreases to $t_0$: to see the assertion, we might, for example, just apply the proof of Lemma~\ref{lemma: Hopf diffs converge}). 
  
 Thus, there is a converging subsequence of these differentials that converge, uniformly on compacta.  On the lifts of the boundaries $\partial X_t$, the Dirichlet conditions for the harmonic map $h_t$ (determined by the values of the differentials in a lift of a collar neighborhood)  converge by construction.  Thus the limiting holomorphic differential provides a solution to the equivariant Dirichlet problem for $X_{t_0}$.  But as such a solution is unique by Proposition~\ref{prop: h-i unique}, we see that the family $h_t$ converges to $h_{t_0}$, as desired.
  
 With that continuity established, after one more observation, the rest of the argument will be classical.  Each of the quasi-isometric embeddings $T^Z_t$ defines a finite collection of vertices of the full tree $T$ as images of the vertices in $T^Z_t$.  But we now see that these vertices vary continuously with $t$.  On the other hand, as observed at the start of the argument for the claim, the vertices in a tree form a discrete set, so therefore must be fixed as $t$ varies. This establishes the claim. \qed\\

 We conclude that, for $t$ (or $i$) sufficiently large, the images in $T$ of the zeroes of $\Phi_t$ (or $\Phi_i$) on a fundamental domain $F_Z$ 
 must be constant. Thus, because if the vertices of a subtree are fixed, then so are the edges between those vertices, and we then conclude that the $h_t$- (or $h_i$-) images of $F_Z$ take values in a fixed compact subtree $T_Z$ of the larger not-locally-compact tree $T$.
 
 With that last statement in hand, the rest of the convergence proof is classical:  the maps $h_i$ are equicontinuous on each compacta, which follows from the Courant-Lebesgue Lemma (Lemma~\ref{cour}) applied with the energy bound from  Lemma~\ref{lemma: energy upper bound}. Moreover, the images of any fixed point lies in a compact set. Thus the proposition follows from the Ascoli-Arzela theorem, followed by a diagonal argument.
 
 This concludes the proof of Proposition~\ref{prop: h_i convergence}.
 \end{proof}

\textit{Remark.}
Much of this argument could be replaced by a modification of the (slightly longer) proof of Lemma~3.4 in \cite{Wolf2}, but this argument provides a different (and briefer) explanation.

\subsection{Finishing the proof}

To conclude the proof, we need to verify that the Hopf differential of the limiting map $h$ indeed satisfies the requirements of Theorem \ref{thm1}.

\subsection*{Principal part is $P$}  First, observe that:

\begin{lem} \label{lem:h near phi} The distance $d(h,\phi)\vert_U$ between the restrictions of $h$ and the model map $\phi$  as in \eqref{coll-phi}, to the neighborhood $U$ of the pole, is uniformly bounded.
\end{lem}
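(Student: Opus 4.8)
The plan is to exploit the fact that the distance between two harmonic maps into a non-positively curved target is subharmonic, and to run a maximum principle on the annuli exhausting $\widetilde{U\setminus p}$, using that on the boundary circle near the pole the approximating maps $h_i$ agree with $\phi$ by the very construction of the Dirichlet problem.

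First I would record that both the limit map $h\colon\widetilde{X}\to T$ furnished by Proposition~\ref{prop: h_i convergence} and the model collapsing map $\phi$ of \eqref{coll-phi} are equivariant harmonic maps into the \emph{same} $\R$-tree $T$. Consequently, for each $i$ the function $x\mapsto d_T(h_i(x),\phi(x))$ (and likewise the function built from $h$ in place of $h_i$) is subharmonic, by the standard convexity estimates for harmonic maps into NPC spaces (see \cite{KorSch1}, \cite{KorSch2}; compare the single-map statement of Lemma~\ref{dist}). Since $h_i$ and $\phi$ are equivariant for one and the same action by isometries, this distance function is invariant and descends to the compact quotient annulus, where the maximum principle is available.

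Next I would fix the compact annulus $X_i\setminus X_1 = U_1\setminus U_i$ and apply the maximum principle to the subharmonic function $d_T(h_i,\phi)$ on it. On the inner boundary $\partial X_i$, the circle near the pole, the Dirichlet condition~(I) of \S\ref{sec: Harmonic map from surface-with-boundary} gives $h_i=\phi$, so $d_T(h_i,\phi)\equiv 0$ there; on the outer boundary $\partial X_1$, a fixed compact curve, the uniform-on-compacta convergence $h_i\to h$ of Proposition~\ref{prop: h_i convergence} gives $\sup_{\partial X_1} d_T(h_i,\phi)\to \sup_{\partial X_1} d_T(h,\phi)=:C_0<\infty$. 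The maximum principle then bounds $d_T(h_i,\phi)$ on the whole annulus by $C_0+1$ for $i$ large, a bound independent of $i$. Fixing any $x\in U_1\setminus p$ and letting $i\to\infty$ (so that $x$ eventually lies in $U_1\setminus U_i$ and $h_i(x)\to h(x)$) yields $d_T(h(x),\phi(x))\le C_0+1$; since $\overline{U\setminus U_1}$ is compact and both maps are continuous there, the bound extends to all of $U\setminus p$.

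I expect the point requiring the most care to be the justification of subharmonicity of $d_T(h_i,\phi)$ across the singular locus, namely the prongs and finite vertices of $T$ dual to the strips, together with the zeros of the Hopf differentials, where the target is not a manifold; this is handled by the distributional form of the NPC convexity inequality, the exceptional set having measure zero. A secondary point is to confirm that the maximum principle applies on the non-simply-connected annulus, which is legitimate after passing to the $\mathbb{Z}$-quotient on which the invariant function descends to a genuine subharmonic function on a compact surface with boundary. As an alternative to the limiting step, one could instead combine the uniform image control of Proposition~\ref{ubound} and Corollary~\ref{cor-20} with the energy comparison of Lemma~\ref{lemma:annulus energy comparison}, but the subharmonic comparison above is the shorter route.
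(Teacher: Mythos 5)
Your argument is correct and follows essentially the same route as the paper: subharmonicity of $d_T(h_i,\phi)$ as the distance between two equivariant harmonic maps into the NPC tree $T$, vanishing on the inner boundary by the Dirichlet condition, a uniform bound on the fixed outer boundary $\partial X_1$ from the uniform-on-compacta convergence $h_i\to h$, the maximum principle on the $\mathbb{Z}$-quotient annulus, and a passage to the limit. The only differences are cosmetic (indexing of the exhausting annuli and the choice of constant).
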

\begin{proof} Since $h_i\to h$ uniformly on compact sets, the restriction of the maps $h_i$ to the lifts of the boundary circle $\partial U = \partial X_1$ are of uniformly bounded distance (by say $B>0$) from the corresponding restriction of the map $\phi$. Consider the restriction of these maps to any lift $\tilde{A}_i$ of the annulus $A_i$. By definition, the distance between $h_i$ and $\phi$ on the boundary $\widetilde{\partial_+ A}_i = \widetilde{\partial X}_i$ is zero. By the preceding observation, this distance is uniformly bounded above by $B$ on $\widetilde{\partial_-A}_i$. Since the distance function is $\mathbb{Z}$-equivariant, it defines a subharmonic function on the annulus $A_i$ which is uniformly bounded on the boundary components. Applying the Maximum Principle, we conclude that it is uniformly bounded by $B$ throughout $A_i$. Since this holds for each $i$, we obtain the same bound, on any compact set, for the distance between the limiting map $h$ and $\phi$.
\end{proof}

\begin{cor}\label{HopfP}
 The Hopf differential of $h$ has principal part $P$.
\end{cor}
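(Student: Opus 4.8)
The plan is to identify the principal part of $\Hopf(h)$ by comparison with the model map $\phi$. Write $q_0$ for the model meromorphic differential $P^2dz^2$ inducing $F^\prime\vert_U$, chosen at the start of the existence proof; since $\phi\vert_U$ is its collapsing map, we have $\Hopf(\phi\vert_U)=q_0$, and by construction $\sqrt{q_0}=\tfrac{P(z)}{z^{n/2}}\,dz$ has principal part $P$. Setting $\Phi=\Hopf(h)$, which we already know (Lemma \ref{lemma: Hopf diffs converge}, Lemma \ref{lemma: Hopf differential non-trivial}) to be a nontrivial holomorphic quadratic differential on $X$, it therefore suffices to prove that the meromorphic $1$-form $\eta=\sqrt\Phi-\sqrt{q_0}$ is holomorphic at $p$ when $n$ is even, and is $O(z^{-1/2})$ times a holomorphic form when $n$ is odd. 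By the expressions \eqref{princ1} and \eqref{princ2}, each of these is exactly the statement that the principal part of $\sqrt\Phi$ equals $P$.

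The key input is Lemma \ref{lem:h near phi}, giving $d(h,\phi)\le B$ on $U$, and I would run the distance computation of Proposition \ref{unq} (equations \eqref{diff}--\eqref{eqn:diff2}) in reverse. For $x\in U$ I would choose an arc $\gamma(x)$ from $\partial U$ to $x$ simultaneously transverse to the horizontal foliations of $\Phi$ and of $q_0$ and along which both may be co-oriented, so that $d_T(h(x),\phi(x))=C_0+\int_{\gamma(x)}\lvert\Im\,\eta\rvert$. The bound $d(h,\phi)\le B$ then forces $\int_{\gamma(x)}\lvert\Im\,\eta\rvert$ to remain bounded as $x\to p$. If the principal part of $\sqrt\Phi$ differed from $P$, then $\eta$ would have a genuine pole at $p$, of integer order $\ge 1$ in the even case and of order $\ge 3/2$ in the odd case, and the integral of $\lvert\Im\,\eta\rvert$ along a radial arc into the pole would diverge (like $\int_0 r^{-k}\,dr$ with $k\ge 1$), contradicting the bound. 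This is precisely the estimate of Proposition \ref{unq} used in the opposite logical direction: boundedness pins $\eta$ down to the allowed singularity (removable, resp. $O(z^{-1/2})$), hence principal part $P$.

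The main obstacle is justifying the combined single-integral form $d_T(h(x),\phi(x))=C_0+\int_{\gamma(x)}\lvert\Im\,\eta\rvert$ near the pole, since choosing arcs into $p$ along which both foliations are transverse and consistently co-oriented presupposes that the foliations of $\Phi$ and of $q_0$ are asymptotically aligned at $p$ -- part of what is being proved. I would resolve this by a bootstrap. First, using only the triangle inequality $\lvert d_T(h(x_0),h(x))-d_T(\phi(x_0),\phi(x))\rvert\le 2B$ together with the fact that the $q_0$-transverse measure of an arc running to $p$ grows at the definite rate $r^{-(n/2-1)}$ dictated by a pole of order $n$, I would match the order of the pole of $\Phi$ (showing it is exactly $n$) and its leading coefficient. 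This forces the half-plane and strip structure of the two foliations to coincide near $p$, so that within each sector the required co-orientation is available; the argument of the previous paragraph then applies sector by sector and matches the remaining coefficients of the principal part. In particular this simultaneously records that $\Hopf(h)$ extends to a meromorphic quadratic differential on $\Sigma$ with a pole of order $n$ at $p$, as demanded by Theorem \ref{thm1}.
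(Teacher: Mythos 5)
Your proposal follows the paper's own argument: the paper likewise deduces the corollary from Lemma~\ref{lem:h near phi} by observing that any discrepancy between the principal part of $\Hopf(h)$ and $P$ would force the distance between the two collapsing maps to blow up near the pole, ``exactly as in the proof of Proposition~\ref{unq}.'' The only difference is that your third paragraph makes explicit a bootstrap (first pinning down that $\Phi$ is meromorphic at $p$ with pole order $n$ and matching leading coefficient, so that the two foliations can be co-oriented sector by sector) which the paper leaves implicit in its one-line appeal to Proposition~\ref{unq}; this is a legitimate refinement rather than a different route.
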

\begin{proof}
Recall that the Hopf differential of $\phi$ is $P^2dz^2$, and hence has principal part $P$ (as defined in \S2.3). 
The proof then is exactly as in the proof of Proposition \ref{unq}: namely, if $\text{Hopf}(h)$ and $P$ differed at some term (involving a negative power of $z$), then the distance function between the maps to the corresponding trees will blow up nearer the pole. This divergence then contradicts the previous lemma that the distance function between the corresponding collapsing maps is uniformly bounded.
\end{proof}

\subsection*{Measured foliation is $F$} Finally, it  remains to check that

\begin{lem}\label{folF}The measured foliation $\mathcal{F}$  induced by the Hopf differential $\text{Hopf}(h)$ is measure-equivalent to $F$.  

\end{lem}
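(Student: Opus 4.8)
The plan is to check that $\mathcal{F}$ and $F$ assign the same transverse measures to the three families of test objects in Definition \ref{mf-pole} --- all simple closed curves, the loop around the pole, and the $(n-2)$ distinguished arcs on $\partial U$ --- which, by the remark following that definition, is the same as showing that the leaf-space $T'$ of $\widetilde{\mathcal{F}}$ is equivariantly isometric to $T$. As a framework, I would first record the standard factorization: $h = \Psi \circ c$, where $c:\widetilde{X} \to T'$ is the collapsing map of $\widetilde{\mathcal{F}}$ and $\Psi:T'\to T$ is a $\pi_1(X)$-equivariant morphism of $\mathbb{R}$-trees (Proposition 2.4 of \cite{DaDoWen}). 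Since a morphism is $1$-Lipschitz, every translation length satisfies $\ell_T(\gamma) \le \ell_{T'}(\gamma)$, so the whole content is to rule out folding of $\Psi$, i.e. to obtain the reverse inequalities together with the matching of the arc measures.

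For any test object lying in a compact subset of $X$ --- and this includes every simple closed curve, the loop around the pole, and the distinguished arcs on $\partial U \subset X_1$ --- I would argue by convergence. Such an object lies in some $X_m$, and for $i \ge m$ the harmonic map $h_i$ of \S\ref{sec: Harmonic map from surface-with-boundary} collapses along a foliation $\mathcal{F}_i$ which by property (II) is measure-equivalent to $F'|_{X_i}$; as measure-equivalence is realized by isotopy and Whitehead moves, which preserve the transverse measure of every arc and curve, $\mathcal{F}_i$ assigns each such object precisely its $F$-measure. Now $\Phi_i \to \Phi = \Hopf(h)$ uniformly on compacta by Lemma \ref{lemma: Hopf diffs converge}, so the foliations $\mathcal{F}_i$, together with their distinguished tangency points on $\partial U$, converge to those of $\mathcal{F}$ there, and the corresponding transverse measures converge. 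Passing to the limit yields equality of all these measures with those of $F$; in particular the local parameters of $\mathcal{F}$ at the pole agree with those of $F$.

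The remaining, and genuinely delicate, point is the behaviour at the pole itself, where $T$ carries the infinite prongs dual to the half-planes and the finite edges dual to the strips, and where classical Skora-type arguments do not directly apply to preclude folding on this non-compact end; this is the step I expect to be the main obstacle. I would dispose of it using the two facts already in hand. Corollary \ref{HopfP} gives that $\Hopf(h)$ has principal part $P$, hence a pole of order exactly $n$; this fixes the number $n-2$ of prongs of $\mathcal{F}$ and, through the compatibility of $P$ with $F$ (Definition \ref{compat}), forces the real part of the residue --- and thus the measure of the loop around the pole --- to equal the alternating sum of the distinguished-arc measures matched in the previous step. Finally, Lemma \ref{lem:h near phi} bounds $d(h,\phi)$ uniformly on $U$; since $\phi$ is the collapsing map of the model foliation $F'|_U \in \mathcal{P}_n$ whose leaf-space already realizes the prong-and-strip pattern of the end $T_U$ of $T$, staying a bounded distance from $\phi$ prevents $\Psi$ from folding near the pole, so $\Psi$ restricts to an isometry on $T_U$. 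Combining this with the compact-part argument shows that $\Psi$ is an equivariant isometry $T'\to T$, and therefore that $\mathcal{F}$ is measure-equivalent to $F$, as desired.
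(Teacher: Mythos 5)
Your framework coincides with the paper's: factor $h = \Psi\circ c$ through the dual tree $\mathcal{T}$ of $\widetilde{\mathcal{F}}$ and reduce to showing that the equivariant morphism $\Psi\colon\mathcal{T}\to T$ does not fold; you also correctly note that the morphism already gives $\ell_T(\gamma)\le\ell_{\mathcal{T}}(\gamma)$ for free, so only the reverse inequalities are at issue. The gap is in the step meant to deliver them: the assertion that, because $\Phi_i\to\Phi$ uniformly on compacta, ``the corresponding transverse measures converge.'' The transverse measure of a homotopy class is an \emph{infimum} over representatives, and locally uniform convergence of the differentials only controls the integral over each \emph{fixed} representative. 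That yields $i(\mathcal{F},\gamma)\ \ge\ \lim_i i(\mathcal{F}_i,\gamma)= i(F,\gamma)$ (integrate over a fixed $\gamma'$, pass to the limit, then take the infimum) --- which is the same inequality the morphism already provides --- but not the upper bound, because the nearly minimizing representatives for $\Phi_i$ may degenerate (lengthen without bound, or drift toward the puncture where the convergence is not uniform) as $i\to\infty$. The missing upper bound is precisely the no-folding statement, so the step assumes what it must prove; and contrary to the framing of your last paragraph, the issue is not confined to the pole --- a priori $\Psi$ could fold over any edge of $T$.

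The paper closes this gap with a leaf-by-leaf argument rather than a curve-by-curve one. It first upgrades the $C^0$-convergence $h_i\to h$ to local $C^1$-convergence away from the isolated zeroes, so that the foliations $\widetilde{\mathcal{F}_i}$ and their individual leaves converge to those of $\widetilde{\mathcal{F}}$. Given leaves $l,l'$ of $\widetilde{\mathcal{F}}$ with distinct images $p\ne q$ in $\mathcal{T}$, one takes converging leaves $l_i\to l$, $l_i'\to l'$; since each $h_i$ factors through an \emph{isometry} onto $T_i\subset T$ (the Skora-type statement from step (b) of \S\ref{sec: Harmonic map from surface-with-boundary}, which underlies the property (II) you invoke), the definite separation of $l_i,l_i'$ in the dual tree of $\widetilde{\mathcal{F}_i}$ transfers to a uniform lower bound on $d_T\bigl(h_i(l_i),h_i(l_i')\bigr)$, and the uniform convergence $h_i\to h$ preserves that bound in the limit; hence $\Psi(p)\ne\Psi(q)$ and $\Psi$ is an injective morphism, i.e.\ an isometry. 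Your appeals to Corollary~\ref{HopfP} and Lemma~\ref{lem:h near phi} are not needed for this, and the claim that a uniform bound on $d(h,\phi)$ by itself precludes folding near the pole would also require justification, since a map can fold on small scales while staying a bounded distance from an unfolded one. To salvage your intersection-number formulation you would need uniform control on the minimizing representatives of each homotopy class, which in effect reproduces the leaf argument.
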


\begin{proof}  Recall that there is a morphism between the leaf-space $\mathbb{R}$-tree $\mathcal{T}$ for the lifted foliation $\widetilde{\mathcal{F}}$ to the desired $\mathbb{R}$-tree $T$, that the harmonic map $h$ factors through. (See, for example, Prop 2.4 of \cite{DaDoWen}.)  It  suffices to prove that this morphism from $\mathcal{T}$  to $T$ is an isometry. 
 
 We first note that since the harmonic maps $h_i\to h$ uniformly, this $C^0$-convergence can be promoted in regularity to a local $C^1$-convergence: for smooth maps from two-dimensional domains, this is a standard application of elliptic regularity and the Cauchy-integral formula (as in the proof of Lemma~\ref{lemma: Hopf diffs converge}). 
To see this in our more general setting where the target is a tree, note that the difficulty is that not only are the maps $h_i$ not smooth in a classical sense at the zeroes of the Hopf differentials, but also the estimates for the $C^1$ norms can depend on the distance to those zeroes. However, these zeroes are isolated points, and classical regularity theory can be applied on restricting to the regions disjoint from them where the target is locally a segment, where the $C^0$-convergence can be promoted in regularity to a local $C^1$-convergence on those non-singular points. But the Hopf differential of $h_i$ is defined only in terms of the first derivatives of $h_i$, and the global energy bound will bound the $L^1$ norm of $Dh_i$ on some circle in any annular region on the surface with a bound that depends only on the radius of the circle (which we can take to be of moderate size and not dependin on distance to the zeroes of the Hopf differential).  The Cauchy-integral formula for that Hopf differential $\Hopf(h_i)$ then applies and one obtains convergence of the Hopf differentials $\Hopf(h_i)$ at both preimages of vertices as well as at preimages of edge points. Since the measured foliations for $h_i$ are induced by these quadratic differentials $\Hopf(h_i)$, the convergence of the Hopf differentials then implies the convergence of the associated measured foliations.

 Moreover, we have noted in \S4.1 that for each approximating harmonic map $h_i$, the corresponding morphism to the truncation $T_i$ of $T$ is an isometry.

Then to verify that there is no folding from the limiting tree $\mathcal{T}$ to $T$ -- concluding the proof -- we argue as follows.

Choose a pair of distinct leaves $l,l^\prime$ of $\widetilde{\mathcal{F}}$ that map to two distinct points $p,q$ in the dual tree $\mathcal{T}$  for $\widetilde{\mathcal{F}}$. It suffices to show that the morphism from $\mathcal{T}$ to $T$ takes $p$ and $q$ to two distinct points:
Since the foliations $\widetilde{\mathcal{F}_i} \to \widetilde{\mathcal{F}}$ by the argument above, their leaf-spaces converge in the Gromov-Hausdorff sense to  $\mathcal{T}$.  In particular, let the leaves $l_i,l_i^\prime$ of $\widetilde{\mathcal{F}_i}$ converge to $l$ and $l^\prime$ respectively.  Then the distance between the images of the leaves $l_i,l_i^\prime$  in the $\mathbb{R}$-tree for $\widetilde{\mathcal{F}_i}$, is uniformly bounded below, for all large $i$. 
However, since from the previous paragraph there is no folding for these approximating foliations, we see that the images of these leaves by the map $h_i$ have a distance in $T_i \subset T$ that is uniformly bounded below. Since $h_i \to h$ uniformly, this lower bound persists in the limit, and hence $h$ maps these leaves to distinct points of $T$. \end{proof}

\subsection*{Conclusion of the proof of Theorem \ref{thm1}} 
The existence part of Theorem  \ref{thm1} now follows from Corollary \ref{HopfP} and Lemma \ref{folF}, while statement of the uniqueness was proven in Proposition \ref{unq}. \qed

\section{Relation to singular-flat geometry: shearing horizontal strips}

Our Theorem \ref{thm1} asserts that there are local parameters at each pole (namely, coefficients of the principal part with respect to a chosen coordinate chart), that, together with the measured foliation, uniquely specify a meromorphic quadratic differential. Recall that these parameters form a space $\prod_{i=1}^k(\mathbb{R}^{n_i-2}\times S^1)$  of total dimension $N - k$ where  $N = \sum\limits_{i=1}^k n_i$ is the sum of the orders of all poles (combining the contributions from each pole - see Lemma \ref{comp}). We conclude the paper by discussing a geometric viewpoint for these parameters, for the case of a \textit{generic} meromorphic quadratic differential. \\

Let $\mathcal{Q}(S, n_1,n_2,\ldots n_k)$ be the space of meromorphic quadratic differentials on a surface $S$ of genus $g\geq 1$ (with respect to varying complex structures),  with $k\geq 1$ poles of order $n_i\geq 3$ for $1\leq i\leq k$. This total space of quadratic differentials can be considered as a bundle over Teichm\"{u}ller space:
\begin{center}
$\mathcal{Q}(S, n_1,n_2,\ldots n_k)$\\
\vspace{.05in}
$\bigg\downarrow$\\
\vspace{.05in}
$\mathcal{T}_{g,k}$
\end{center}

For any foliation $F \in \mathcal{MF}(n_1,n_2,\ldots n_k)$, let $Q(F) \subset \mathcal{Q}(S, n_1,n_2,\ldots n_k)$ be the subspace of the total space of all differentials whose induced horizontal measured foliation is $F$. 

The main result of the paper, Theorem \ref{thm1}, asserts that the above projection, when restricted to $Q(F)$, is a surjective map to $\mathcal{T}_{g,k}$. 
A dimension count for $Q(F)$ then gives $6g-6+ 2k$ parameters coming from $\mathcal{T}_{g,k}$, and $N-k$ for the fiber over each point as noted above, yielding a total of $\chi = 6g-6 + \sum\limits_i (n_i + 1)$.

In this section, we introduce the operation of ``shearing" along horizontal strips in the foliation, and observe that as a consequence of the work of Bridgeland-Smith in \cite{BriSmi}, for a generic foliation $F$,  this parametrizes  a neighborhood of any point in $Q(F)$ (see Proposition \ref{shear}).

\subsection*{Horizontal strips} The key geometric feature of a generic  differential with higher order poles  is that the induced singular-flat metric has \textit{horizontal strips} as introduced in \S2.3, namely,  maximal subdomains isometric to  $\mathcal{S}(a) = \{ z\in \mathbb{C}| -a < \mathrm{Im}(z) <a\}$ for some $a\in \mathbb{R}_+$, with the induced horizontal foliation being the horizontal lines $\{ \Im z = \text{constant}\}$.

In fact, a generic element of $\mathcal{Q}(S,  n_1,n_2,\ldots n_k)$  has 
\begin{itemize}
\item  all simple zeroes,
\item an induced horizontal foliation with each non-singular leaf starting and ending at poles, and 
\item an induced singular flat metric that comprises $\chi = 6g-6 + \sum\limits_i (n_i + 1)$ horizontal strips, in addition to the $(n_i-2)$ half-planes around each pole. 
\end{itemize}
For this, we refer to the work in \cite{BriSmi}  (see, for example \S4.5 of that paper). A generic differential as described above is a  ``saddle-free GMN-differential", as introduced in that paper. \\

We also recall from \cite{BriSmi} that each horizontal strip $H$ of a generic differential $q$
 has a \textit{unique} zero on each boundary component, and the complex \textit{period} of the horizontal strip  is then
\begin{equation}
\mathrm{Per}(H) =  \pm \displaystyle\int\limits_\gamma \sqrt q
\end{equation}
where $\gamma$ is an arc in the strip between the two zeroes. (Here the sign can be chosen such that $\mathrm{Per}(H) \in \mathbb{H}$, by a choice of `framing' in the orientation double-cover.)

\subsection*{Shears} The absolute value of the imaginary part of such a period gives the \text{width} or transverse measure across the strip, \textit{c.f.} (\ref{trans}). 

The following geometric operation on a horizontal strip keeps the width fixed, but changes the real part of the period.

\begin{defn}  A \textit{shear} along horizontal strip of width $w$  is the operation of  cutting along the bi-infinite horizontal leaf in the middle (at height $w/2$), that is isometric to $\mathbb{R}$, and gluing back by a translation. The \textit{shear parameter} is the real number that is the translation distance, where the sign is determined by a choice of framing or orientation of the strip.   
\end{defn}

Performing a shear with parameter $s\in \mathbb{R}$ along an (oriented) horizontal strip  results in a new singular-flat surface in $\mathcal{Q}(S, n_1,n_2,\ldots n_k)$, that is a new Riemann surface equipped with a meromorphic quadratic differential with the given higher order poles.  
In what follows let $\chi = 6g-6 + \sum\limits_i (n_i + 1)$. Our main observation is:

\begin{prop}\label{shear} Let $F\in \mathcal{MF}(n_1,n_2,\ldots n_k)$ be a generic measured foliation, that is,  having $\chi$ horizontal strips. Let $Q(F)$ be the corresponding subspace of the total space $\mathcal{Q}(S, n_1,n_2,\ldots n_k)$. Then  for any  singular-flat surface $\Sigma_0 \in Q(F)$, there is a neighborhood $\mathcal{V}$ of the origin in $\mathbb{R}^\chi$ such that the map 
\begin{equation*}
\mathsf{S}: \mathcal{V} \to Q(F) 
\end{equation*}
that assigns to a real $\chi$-tuple $\bar{s} = (s_1,s_2,\ldots, s_\chi)$ the singular-flat surface obtained by shearing $\Sigma_0$ along the horizontal strips by shear parameters $\bar{s}$, is a diffeomorphism to its image. 
\end{prop}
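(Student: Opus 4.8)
The plan is to show that near a generic differential $\Sigma_0$ the \emph{period coordinates} of Bridgeland--Smith furnish holomorphic coordinates on $\mathcal{Q}(S,n_1,\ldots,n_k)$ in which $Q(F)$ is a real affine slice and shearing is simply a translation. Since the saddle-free condition is open (\cite{BriSmi}), every differential in a small neighborhood of $\Sigma_0$ is again saddle-free, has exactly $\chi$ horizontal strips, and shares the \emph{same} combinatorial pattern of strips, half-planes, zeros and poles; this makes a labelling $H_1,\ldots,H_\chi$ of the strips, and the associated periods $\zeta_j=\mathrm{Per}(H_j)$, well-defined and smoothly varying across the neighborhood. Because $\dim_{\C}\mathcal{Q}(S,n_1,\ldots,n_k)=\chi$ and, for a generic differential, the $\chi$ strip periods give a full independent set of period coordinates (the hat-homology basis of \cite{BriSmi}), the work of \cite{BriSmi} shows that $\mathrm{Per}=(\zeta_1,\ldots,\zeta_\chi)$ is a local biholomorphism onto an open subset of $\C^\chi$; this is the essential external input I would invoke.

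Next I would locate $Q(F)$ inside these coordinates. By \eqref{trans} the transverse measure of $H_j$ equals $\mathrm{Im}\,\zeta_j$, and, the combinatorial type being constant, the transverse measure of every simple closed curve, of every loop about a pole, and of each distinguished arc is a linear function, with integer coefficients, of the widths $\mathrm{Im}\,\zeta_1,\ldots,\mathrm{Im}\,\zeta_\chi$. A measured foliation being determined by its transverse measures, two nearby differentials share the foliation $F$ exactly when their strip widths coincide; thus $Q(F)=\{\mathrm{Im}\,\zeta_j=w_j,\ 1\le j\le\chi\}$, where $w_j$ is the width of $H_j$ on $\Sigma_0$. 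Since $\mathcal{MF}(n_1,\ldots,n_k)\cong\R^\chi$ (Proposition~\ref{prop:mfk}) and $\dim Q(F)=\chi$, the $\chi$ widths give local coordinates on $\mathcal{MF}(n_1,\ldots,n_k)$ near $F$; in particular they are independent, the slice is a smooth $\chi$-manifold, and $\mathrm{Per}$ restricts to a diffeomorphism of $Q(F)$ onto an open subset of the real affine subspace $\{\mathrm{Im}\,\zeta=w\}$, coordinatized by $\mathrm{Re}\,\zeta_1,\ldots,\mathrm{Re}\,\zeta_\chi$.

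It then remains to compute a shear in these coordinates. Cutting $H_j$ along its central bi-infinite leaf and regluing by a horizontal translation of signed length $s_j$ leaves the differential untouched off $H_j$, hence fixes $\zeta_k$ for $k\ne j$; along an arc meeting the cut once it adds $s_j$ to the horizontal holonomy, so $\mathrm{Re}\,\zeta_j\mapsto\mathrm{Re}\,\zeta_j+s_j$ while the width $\mathrm{Im}\,\zeta_j$ is unchanged. Hence $\mathsf{S}(\bar s)\in Q(F)$ and $\mathrm{Per}\circ\mathsf{S}(\bar s)=\mathrm{Per}(\Sigma_0)+(s_1,\ldots,s_\chi)$ is the translation of the real parts by $\bar s$. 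Composing with the diffeomorphism $\mathrm{Per}|_{Q(F)}$ of the previous step, the map $\mathsf{S}$ agrees near the origin with a translation read in a coordinate chart, and is therefore a diffeomorphism onto its image for $\bar s$ in a sufficiently small neighborhood $\mathcal{V}$ of $0$.

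I expect the main obstacle to be the two identifications underlying the first two steps: first, that the Bridgeland--Smith strip periods really do form a local coordinate system on $\mathcal{Q}(S,n_1,\ldots,n_k)$, with the residue and boundary classes bookkept so that the $\chi$ periods are genuinely independent and subject to no hidden relation; and second, the verification that shearing a single strip alters exactly one period coordinate and fixes the rest. The locality of the cut-and-reglue surgery makes the latter plausible, but one must confirm that it is compatible with the (equivariant, homological) definition of $\mathrm{Per}$ and does not disturb the zeros bounding the neighboring strips. Once these are in place, the diffeomorphism assertion is formal.
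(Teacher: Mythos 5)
Your proposal is correct and follows essentially the same route as the paper's proof: both arguments rest on the observation that a shear translates the real part of a single strip period while fixing all imaginary parts (hence preserving the foliation), combined with the Bridgeland--Smith result (Theorem 4.12 of \cite{BriSmi}) that the strip periods are local coordinates on $\mathcal{Q}(S,n_1,\ldots,n_k)$, and a dimension count identifying $Q(F)$ locally with the real slice $\{\Im\zeta = w\}$. Your write-up is somewhat more explicit than the paper's about why $Q(F)$ is exactly that affine slice, but the substance is the same.
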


\begin{proof} 

 By the definition of a shear, the new horizontal strips have  periods whose differences with the previous periods are the \textit{real} shear parameters. Note that this operation does not change the \textit{imaginary} parts of the periods. In particular, by (\ref{trans}),  all transverse measures of arcs are unchanged by the shearing operation. Hence,  the shear operation does not change the horizontal measured foliation. 
 
 By Theorem \ref{thm1}, and the discussion at the beginning of this section, the subspace $Q(F)$ is locally homeomorphic to $\mathbb{R}^\chi$, and hence the dimensions of the domain and target match.

The fact that $\mathsf{S}$ is a local diffeomorphism is then a consequence of Theorem 4.12 of \cite{BriSmi}, which asserts that the complex periods across the strips in fact form local parameters for the total space of differentials  $\mathcal{Q}(S, n_1,n_2,\ldots n_k)$.
\end{proof}

\textit{Remark.} It would be interesting to describe a geometric parameterization of the \textit{entire} subspace $Q(F)$ using shear operations on strips. One of the difficulties is that the shearing map $\mathsf{S}$ in Proposition  \ref{shear} , when extended to $\mathbb{R}^{\chi}$, is not proper.  
Indeed, if it were proper, then by the Invariance of Domain the map $\mathsf{S}$ would be a homeomorphism, which cannot hold as $Q(F)$ is not simply-connected: for example, as a bundle over $\mathcal{T}_{g,k}$, the fibers have an $S^1$ factor.  
The non-properness is explained by the phenomenon that as shear parameters diverge, the periods of simple closed curves transverse to the foliation might remain bounded, since any such period is the sum of periods across different strips, which might have opposing signs that allow for cancellations.

\appendix

\section{Solving the partially free boundary problem}

In this appendix we provide the proof of  Proposition \ref{doub}, restated here in a self-contained way. 

We shall follow the arguments in \S4.3 (Doubling trick) in \cite{GW1} almost verbatim. In that paper, the target was a $k$-pronged graph with a \textit{single} vertex; in the present paper, the target is a graph with finitely many vertices, and the arguments are very slightly modified.

For $A$ a conformal annulus, let $\hat{A}$ be the annulus obtained by doubling the annulus $A$ across its boundary component $\partial_-A$.  That is, if we denote, as usual, the boundary components $\partial A = \partial_+ A \sqcup \partial_- A$, then we set  $\hat{A}$ to be the identification space of two copies of $A$, where the two copies of $\partial_- A$ are identified.  We write this symbolically as $\hat{A}= A \sqcup_{\partial_- A} \bar{A}$, where $\bar{A}$ refers to $A$ equipped with its opposite orientation.

\begin{prop}\label{doub2} Let $A$ be a conformal annulus and let $\chi$ be a metric graph with finitely many vertices and edges of finite length.  Fix a continuous map $\phi:\partial_+A\to \chi$ on one boundary component that takes on each value only finitely often, and consider the solution $h:A\to \chi$  to the  partially free boundary problem that requires $h$ to agree with $\phi$ on that boundary component $\partial_+A$. Then this map $h$ extends by symmetry to a solution $\hat{h}$ of the symmetric Dirichlet-problem on the doubled annulus $\hat{A} = A^+ \sqcup_{\partial_{-}A} A^-$ where one requires a candidate $\phi$ to be the map on both boundary components of $\hat{A}$. In particular, we have $h = \hat{h}\vert_{A^+}$. 
\end{prop}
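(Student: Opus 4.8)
The plan is to characterize the free-boundary solution $h$ by its natural (Neumann-type) boundary condition along $\partial_- A$ and then to build $\hat{h}$ by reflection, the whole content being that this reflection is genuinely harmonic across the seam. First I would record the first-variation characterization of $h$: since $h$ minimizes energy subject only to the constraint on $\partial_+ A$, taking compactly supported variations that are free to move along $\partial_- A$ shows that $h$ is harmonic in the interior of $A$ and satisfies a natural boundary condition on $\partial_- A$. Concretely, at any point of $\partial_- A$ whose image lies in the interior of an edge of $\chi$, the target is locally isometric to an interval in $\R$, so $h$ is locally a scalar harmonic function, and energy-minimality under variations free to slide along $\partial_- A$ forces the outward normal derivative $\partial_\nu h$ to vanish there.

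The next step is to control the exceptional set where this description breaks down, namely the points of $\partial_- A$ mapping to a vertex of $\chi$. Since $\chi$ has finitely many vertices, I would analyze the preimage $h^{-1}(\{\text{vertices}\})$, which is the critical graph of the locally harmonic map $h$: near such a point the map has the local model of a harmonic function collapsing onto a vertex, whose level sets form a finite-pronged structure meeting $\partial_- A$ in isolated points. Combining this local finiteness with the finite topology of $\chi$ and the hypothesis that $\phi$ assumes each value only finitely often, I would conclude that these preimages meet the free boundary in only finitely many points. Hence $\partial_\nu h = 0$ on all of $\partial_- A$ away from a finite exceptional set.

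I would then define $\hat{h}$ on $\hat{A} = A^+ \sqcup_{\partial_- A} A^-$ by setting $\hat{h} = h$ on $A^+$ and letting $\hat{h}$ on $A^-$ be $h$ precomposed with the anti-conformal reflection across $\partial_- A$. Continuity of $h$ and the agreement of the two copies along the seam give $\hat{h} \in W^{1,2}(\hat{A})$, by the standard gluing of Sobolev maps across a smooth curve with matching traces; I emphasize that this membership uses only continuity, not the Neumann condition. Away from the finitely many exceptional points on the seam, the vanishing of $\partial_\nu h$ means that variations supported across $\partial_- A$ pick up no boundary term, so $\hat{h}$ is critical for energy and therefore harmonic there.

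It remains to remove the finitely many exceptional points, and this I expect to be the main obstacle: establishing both their finiteness (above) and the fact that harmonicity extends across them. Here the Hopf differential $\Hopf(\hat{h})$ is holomorphic on $\hat{A}$ minus the finite exceptional set, and the finite-energy bound controls its $L^1$ norm; together with the continuity (hence local boundedness) of $\hat{h}$ and the reflection symmetry, I would argue each exceptional point is a removable singularity, so $\Hopf(\hat{h})$ is holomorphic throughout. Equivalently, a finite set has zero $2$-capacity and is therefore removable for a continuous finite-energy weakly harmonic map, so $\hat{h}$ is harmonic on all of $\hat{A}$. Since $\hat{h}$ realizes the value $\phi$ on both boundary components of $\hat{A}$ by construction, it solves the symmetric Dirichlet problem, and the uniqueness of energy-minimizing maps with prescribed boundary data into the NPC graph $\chi$ identifies $\hat{h}$ as that solution, giving $h = \hat{h}\vert_{A^+}$ as claimed.
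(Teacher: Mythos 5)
Your overall architecture matches the paper's own proof: the first variation gives harmonicity in the interior plus the Neumann condition $\partial_\nu h = 0$ on $\partial_- A$ away from vertex preimages; then one reflects, shows the glued map is weakly harmonic off an exceptional set, removes that set, and identifies the result with the Dirichlet solution on the double by uniqueness into the NPC target. Your treatment of the reflection, the $W^{1,2}$ gluing along the seam, the removability of a finite exceptional set for a continuous finite-energy weakly harmonic map (equivalently, removable singularities of the $L^1$-bounded Hopf differential), and the final uniqueness step are all sound and essentially coincide with the paper's.

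The gap is in your second paragraph, where finiteness of the exceptional set is asserted rather than proved. You say that near a point of $\partial_- A$ mapping to a vertex $O$, the map "has the local model of a harmonic function collapsing onto a vertex, whose level sets form a finite-pronged structure meeting $\partial_- A$ in isolated points." At a free-boundary point that local model is not available a priori: the pronged local structure of harmonic maps to trees is an interior statement, and at the seam it is precisely what the reflection is supposed to establish, so invoking it here is circular. Concretely, you have not excluded the two degenerate configurations that must be ruled out: (i) a component of $h^{-1}(O)$ containing a whole segment of $\partial_- A$, and (ii) an arc of $h^{-1}(O)$ in $A$ with both endpoints on $\partial_- A$. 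The paper disposes of (ii) by an energy-decreasing redefinition — the region of $A$ cut off by such an arc and $\partial_- A$ can be collapsed to the vertex $O$, which is admissible because $\partial_- A$ is free, contradicting minimality — and disposes of (i) by expanding $h$ near the boundary as $\Im(az^k) + O(|z|^{k+1})$ and showing that the Neumann condition together with a boundary segment in the level set forces $a=0$, i.e.\ $h$ constant. Only after these exclusions does one know that every component of $h^{-1}(O)$ meeting $\partial_- A$ does so in a single point and must connect to $\partial_+ A$, at which point the hypothesis that $\phi$ takes each value only finitely often yields finiteness. You correctly flag finiteness as the main obstacle, but the argument you sketch for it does not close without some version of these two exclusions.
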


\begin{proof}[Warmup to the proof of Proposition \ref{doub2}] We begin by assuming that the solution $h$ to the partially free boundary problem described above has image $h(\partial_-A)$ of the boundary  component $\partial_-A$ disjoint from the vertices of $\chi$.

By this assumption, near the boundary $\partial_-A$, we have that $h$ is a harmonic map to a smooth (i.e. non-singular) target locally isometric to a segment.

First, we show that for the solution of the partially free-boundary problem,  the normal derivative at the (free) boundary component $\partial_-A$ vanishes. We include the elementary computation below for the sake of completeness. \\

Consider a family of maps $u_t: A \to \R$ defined for $t \in (-\epsilon, \epsilon)$.  A map $u_0=h$ in this family is critical for energy if
\begin{align*}
0 &=\frac{d}{dt}\Bigr|_{t=0} E(u_t) \\
&= \frac{d}{dt}\Bigr|_{t=0} \frac{1}{2}  \iint_A |\nabla u_t|^2 dvol_A \\
&= \iint_A \nabla \dot{u} \cdot \nabla u_0 dvol_A \\
&= -\iint_A \dot{u} \Delta u_0  dvol_A  + \int_{\partial A} \dot{u} \frac{\partial}{\partial \nu} u_0 dvol_{\partial A}
\end{align*}
where the last equality is obtained by an integration by parts. (Here $ \frac{\partial}{\partial \nu}$ indicates the outward normal derivative to the boundary.)\\

Thus, since $\dot{u} = \frac{d}{dt}\Bigr|_{t=0} u_t$ is arbitrary (and vanishes on the boundary component where the value of $u$ is fixed), we see that the
necessary conditions for a solution $u_0$ to the partially free boundary value problem are that 
\begin{align*}
\Delta u_0 &=0  \text{ (in the interior of $A$)}
\end{align*}
\begin{align}\label{normal}
\frac{\partial}{\partial \nu} u_0 &=0 \text{ (on the free boundary).}
\end{align}

We then show that the partially free boundary solution $h$ is ``half" of a Dirichlet problem on a doubled annulus.  We follow an approach developed by A. Huang in his Rice University thesis (see Lemma 3.5 of \cite{Huang16}).

Let $\hat{h}: \hat{A} \to \chi$ denote the map defined on $\hat{A}$ that restricts to $h$ on the inclusion $A \subset \hat{A}$ and, in the natural reflected coordinates, on the inclusion $\bar{A} \subset \hat{A}$. By the continuity of $h$ on $A$ and its closure, it is immediate that $\hat{h}$ is continuous on $\hat{A}$.  The vanishing of the normal derivative at the boundary (\ref{normal})  implies that the gradient $\nabla h|_{\partial_- A}$ is parallel to $\partial_- A$.  As that gradient is continuous on $A$ up to the boundary (see e.g. \cite{Evans}, Theorem~6.3.6), we see that $\hat{h}$ has a continuously defined gradient on the interior of the doubled annulus $\hat{A}$. 

Next, note that because $\hat{h}$ is $C^1$ on $\hat{A}$, we have that $\hat{h}_i$ is weakly harmonic on $\hat{A}$.  In particular, we can invoke classical regularity theory to conclude that $\hat{h}$ is then smooth and harmonic on $\hat{A}$.  Thus, since $\chi$ is an NPC space, the map $\hat{h}$ is the unique solution to the Dirichlet harmonic mapping problem of taking $\hat{A}$ to $\chi$ with boundary values $h|_{\partial_+ A}$.

This concludes the proof of the model case.\\

Next, we adapt this argument to the general case when the image of the boundary $h(\partial_-A)$ might possibly contain a  vertex  of the graph $\chi$.  \\

To accomplish the extension to the singular target case, we first analyze the behavior of the level set $h^{-1}(O)$ of a vertex $O$ within the annulus
$A$, particularly with respect to its interaction with the free boundary $\partial_- A$. 

\begin{lem}
Under the hypotheses above, any connected component of the level set $h^{-1}(O)$ of a vertex $O$ within the annulus
$A$ meets the free boundary $\partial_- A$ in at most a single point.\end{lem}

\begin{proof}
We begin by noting that the proof of the Courant-Lebesgue lemma (see Lemma~\ref{cour}), based on an energy estimate for $h$ on an annulus (see, for example, Lemma 3.2 in \cite{Wolf2}) extends to hold for half-annuli, centered at boundary points of $\partial_- A$. Applying that argument yields  a uniform estimate on the modulus of continuity of the map $h$ on the closure of $A$ only in terms of the total energy of $h$. Thus there is a well-defined continuous extension of the map $h$ to $\partial_- A$. We now study this extension, which we continue to denote by $h$.

First note that there cannot exist an arc $\Gamma \subset A \cap h^{-1}(O)$ in the level set for $O$ in $A$ for which $\Gamma$ meets $\partial_-A$ in both endpoints of $\partial \Gamma$.  If not, then since $A$ is an annulus, 
some component of $A \setminus \Gamma$ is bounded by arcs from $\partial_-A$ and $\Gamma$.  But as $\partial_-A$ is a free boundary, we could then redefine $h$ to map only to the vertex $O$ on that component, lowering the energy.  This then contradicts the assumption that $h$ is an energy minimizer.

Focusing further on the possibilities for the level set $h^{-1}(O)$, we note that by the assumption on the boundary values of $h$ on $\partial_+A$  being achieved only a finitely many times,  the level set $h^{-1}(O)$  can meet $\partial_+A$ in only a finite number of points (in fact the number of them  is also fixed and equal to a number $K$ in subsequent applications, since the boundary map would be a restriction of the collapsing map for a pole of finite order).

Therefore, with these restrictions on the topology of $h^{-1}(O)$ in $A$ in hand, we see that by the argument in the previous two paragraphs, each component of $h^{-1}(O)$ then must either be completely within, or have a segment contained in $\partial_-A$,  or  - the only conclusion we wish to permit - connects a single point of $\partial_-A$ with a preimage of the vertex on $\partial_+A$.

Consider the first case where a component of $h^{-1}(O)$ is completely contained within $\partial_-A$.  A neighborhood $N$ of a point in such a component then has image $h(N)$ entirely within a single prong, so the harmonic map on that neighborhood agrees with a classical (non-constant) harmonic function to an interval. Thus in a neighborhood of the boundary segment, say on a coordinate neighborhood 
$\{\Im (z) =  y \in [0, \delta) \}$, 
the requirements from equation~\eqref{normal} and that $h(0) = O$ and non-constant imply that the harmonic function $h$ to (i) is expressible locally as $\Im (az^k) + O(|z|^{k+1})$ for some $k \ge 1$ and some constant $a \in \C^*$, 
(ii) be real analytic, and (iii) satisfy $\frac{\partial h}{\partial y} = 0$ (where $z = x+iy$).  It is elementary to see that these conditions preclude this segment $h^{-1}(O)$ from being more than a singleton: that  $h^{-1}(O)$ contains a segment defined by $\{y=0, x\in (-\epsilon, \epsilon)\}$ implies that the constant $a$ in condition (i) is real. But then $0=\frac{\partial h}{\partial y} = \Re (akz^{k-1}) +O(|z|^{k})$ also on that segment $\{y=0, x\in (-\epsilon, \epsilon)\}$: thus $a = 0$, and so the map $h$ must be constant, contrary to hypothesis.

The same argument rules out  the case when the level set $h^{-1}(O)$ meets the free boundary $\partial_-A$ in a segment, and that segment is connected by an arc of $h^{-1}(O)$ to $\partial_+A$.  Namely, for this situation, we apply the argument of the previous paragraph to a subsegment of $h^{-1}(O)$ on $\partial_-A$ with a neighborhood whose image meets only an open prong, concluding as above that such a segment on $\partial_-A$ is not possible. 

Thus the intersection of such a component of the level set $h^{-1}(O)$ with the free boundary $\partial_-A$ is only a singleton, as needed.
\end{proof}

\emph{Conclusion of the proof of Proposition~\ref{doub2}:} It remains to consider the case when the image of the boundary $\partial_-A$ by $h$ contains a vertex $O$.  It is straightforward to adapt, as follows, the argument we gave in the warmup for the smooth case to the singular setting.

 Consider a neighborhood of a point on $h^{-1}(O) \cap \partial_-A$.  Doubling the map on that half-disk across the boundary $\partial_-A$ yields a harmonic map from the punctured disk to the graph (defined everywhere except at the isolated point $h^{-1}(O) \cap \partial_-A$).  That harmonic map is smooth on the punctured disk and of finite energy, and hence has a Hopf differential of bounded $L^1$-norm.  The puncture is then a removable singularity for that holomorphic differential, and hence for the harmonic map. 

The extended map $\hat{h}$ is then harmonic on the doubled annulus, and is the (unique) solution to the corresponding Dirichlet problem, as required. 

(Note that  the normal derivative of the map may have a vanishing gradient at the boundary prior to doubling; this results in a zero of the Hopf differential on the central circle of the doubled annulus.)\end{proof}

\section{Proof of Lemma~\ref{decay}} \label{appendix:decay}

Consider first the  special case when $f(\theta) = Me^{in\theta}$ where $n\geq 1$ and $M$ is a real coefficient. 

We compute that the Laplace equation $\triangle h = 0$ has solution
\begin{equation}\label{solution}
h(x,\theta)  = \left( \frac{\sinh nx + \sinh n(L-x) }{\sinh nL} \right) Me^{in\theta} 
\end{equation}
where we have used the boundary conditions $h(0,\cdot) = h(L, \cdot) = f$. 

Thus, at $x=L/2$ we then obtain
\begin{equation}\label{term}
\lvert h(L/2,\theta) \rvert  \leq K \cdot Me^{-\lvert n \rvert L/2}
\end{equation}
for some (universal) constant $K$.

For general boundary values $f$, we have the Fourier expansion
\begin{equation*}
f(\theta) = \sum\limits_{n\neq 0} M_ne^{in\theta}
\end{equation*}
where note that there is no constant term because the mean of the boundary map $f$ vanishes.
The coefficients of $f$ satisfy
\begin{equation}\label{fbd}
\sum\limits_{n\neq 0} \lvert M_n\rvert^2 = \lVert f \rVert_2^2 \leq M^2
\end{equation}
and the general solution is:
\begin{equation*}
h(x,\theta)  =  \sum\limits_{n\neq 0} \left( \frac{\sinh nx + \sinh n(L-x) }{\sinh nL} \right) M_ne^{in\theta} 
\end{equation*}

From (\ref{term}) we find:

\begin{equation}\label{bd1}
\lvert h(L/2,\theta) \rvert  \leq   \sum\limits_{n\neq 0} K\cdot M_ne^{-\lvert n\rvert L/2}
\end{equation}

\medskip 
Note that the geometric series
\begin{equation}\label{geom}
  \sum\limits_{n=1}^\infty e^{-nL} =  \left( \frac{e^{-L}}{1 - e^{-L}} \right)  \leq  (K^\prime)^2e^{-L}
\end{equation}
for the constant $K^\prime = (1-e^{-1})^{-1/2} \approx 1.26$ (once we assume that $L>1$).

By the Cauchy-Schwarz inequality on (\ref{bd1}) and using (\ref{fbd}) and (\ref{geom}) , we then get:
\begin{equation*}
\lvert h(L/2,\theta) \rvert  \leq   2K\cdot M \cdot K^\prime e^{-L/2}
\end{equation*}
which is the required bound.\\

For proving the decay of derivative along the $\theta$-direction, note that from (\ref{solution}) we have
\begin{equation*}
h_\theta (x,\theta)  =  \sum\limits_{n\neq 0} \left( \frac{\sinh nx + \sinh n(L-x) }{\sinh nL} \right) n\cdot M_ne^{in\theta} 
\end{equation*}

and thus just as in (\ref{term}) we can derive:

\begin{equation}\label{bd1}
\lvert h_\theta(L/2,\theta) \rvert  \leq   \sum\limits_{n\neq 0} \lvert n\rvert \cdot  K\cdot M_ne^{-\lvert n\rvert L/2}
\end{equation}

\medskip 
This time estimating the series yields
\begin{equation}\label{geom}
  \sum\limits_{n=1}^\infty n \cdot e^{-nL} =   \frac{e^{-L}}{(1 - e^{-L})^2}  \leq  (K^{\prime\prime})^2 e^{-L}
\end{equation}
for the constant $K^{\prime\prime} = (1-e^{-1}) \approx 1.6$ (once we assume that $L>1$). 

Thus, by an application of the Cauchy-Schwarz inequality as before, we get:
\begin{equation*}
\lvert h_\theta(L/2,\theta) \rvert  \leq   2K\cdot M \cdot K^{\prime\prime} e^{-L/2}
\end{equation*}
which is the analogous exponential decay for the $\theta$-derivative of $h$.\\
\qed

\bibliographystyle{amsalpha}
\bibliography{qdref3}

\newcommand{\etalchar}[1]{$^{#1}$}
\def\cprime{$'$}
\providecommand{\bysame}{\leavevmode\hbox to3em{\hrulefill}\thinspace}
\providecommand{\MR}{\relax\ifhmode\unskip\space\fi MR }
\providecommand{\MRhref}[2]{%
  \href{http://www.ams.org/mathscinet-getitem?mr=#1}{#2}
}
\providecommand{\href}[2]{#2}
\begin{thebibliography}{DDW00}

\bibitem[ALPS]{ALPS}
Daniele Alessandrini, Lixin Liu, Athanase Papadopoulos, and Weixu Su, \emph{The
  horofunction compactification of the arc metric on teichmller space},
  http://arxiv.org/abs/1411.6208.

\bibitem[BCG{\etalchar{+}}]{Grushetal}
M.~Bainbridge, D.~Chen, Q.~Gendro, S.~Grushevsky, and M.~Moeller,
  \emph{Compactification of strata of abelian differentials}, {\it preprint,
  arXiv:1604.08834}.

\bibitem[Boi15]{Boi}
Corentin Boissy, \emph{Connected components of the strata of the moduli space
  of meromorphic differentials}, Comment. Math. Helv. \textbf{90} (2015),
  no.~2, 255--286.

\bibitem[BS15]{BriSmi}
Tom Bridgeland and Ivan Smith, \emph{Quadratic differentials as stability
  conditions}, Publ. Math. Inst. Hautes \'Etudes Sci. \textbf{121} (2015),
  155--278.

\bibitem[CM87]{CullMor}
Marc Culler and John~W. Morgan, \emph{Group actions on {${\bf R}$}-trees},
  Proc. London Math. Soc. (3) \textbf{55} (1987), no.~3, 571--604.

\bibitem[DDW00]{DaDoWen}
G.~Daskalopoulos, S.~Dostoglou, and R.~Wentworth, \emph{On the
  {M}organ-{S}halen compactification of the {${\rm SL}(2,{\bf C})$} character
  varieties of surface groups}, Duke Math. J. \textbf{101} (2000), no.~2,
  189--207.

\bibitem[DW07]{DaWen}
Georgios~D. Daskalopoulos and Richard~A. Wentworth, \emph{Harmonic maps and
  {T}eichm\"uller theory}, Handbook of {T}eichm\"uller theory. {V}ol. {I}, IRMA
  Lect. Math. Theor. Phys., vol.~11, Eur. Math. Soc., Z\"urich, 2007,
  pp.~33--109.

\bibitem[Eva10]{Evans}
Lawrence~C. Evans, \emph{Partial differential equations}, second ed., Graduate
  Studies in Mathematics, vol.~19, American Mathematical Society, Providence,
  RI, 2010. \MR{2597943 (2011c:35002)}

\bibitem[FLP12]{FLP}
Albert Fathi, Fran{\c{c}}ois Laudenbach, and Valentin Po{\'e}naru,
  \emph{Thurston's work on surfaces}, Mathematical Notes, vol.~48, Princeton
  University Press, Princeton, NJ, 2012, Translated from the 1979 French
  original by Djun M. Kim and Dan Margalit.

\bibitem[FW01]{FarWol}
Benson Farb and Michael Wolf, \emph{Harmonic splittings of surfaces}, Topology
  \textbf{40} (2001), no.~6, 1395--1414.

\bibitem[Gup15]{Gup3}
Subhojoy Gupta, \emph{Asymptoticity of grafting and {T}eichm\"{u}ller rays
  {II}}, Geometriae Dedicata \textbf{176} (2015), 185--213.

\bibitem[GW]{GW15}
Subhojoy Gupta and Michael Wolf, \emph{Meromorphic quadratic differentials with
  complex residues and spiralling foliations}, Proceedings of the Sixth
  Ahlfors-Bers Colloquium (Yale, 2014), American Mathematical Soc., {\it to
  appear}.

\bibitem[GW16]{GW1}
\bysame, \emph{Quadratic differentials, half-plane structures, and harmonic
  maps to trees}, Comment. Math. Helv. \textbf{91} (2016), no.~2, 317--356.

\bibitem[HM79]{HubbMas}
John Hubbard and Howard Masur, \emph{Quadratic differentials and foliations},
  Acta Math. \textbf{142} (1979), no.~3-4.

\bibitem[Hua]{Huang16}
A.~Huang, \emph{Harmonic maps of punctured surfaces to the hyperbolic plane,
  {http://arxiv.org/abs/1605.07715}}.

\bibitem[Jos84]{Jost1}
J{\"u}rgen Jost, \emph{Harmonic maps between surfaces}, Lecture Notes in
  Mathematics, vol. 1062, Springer-Verlag, Berlin, 1984.

\bibitem[KS93a]{KorSch}
Nicholas~J. Korevaar and Richard~M. Schoen, \emph{Sobolev spaces and harmonic
  maps for metric space targets}, Comm. Anal. Geom. \textbf{1} (1993), no.~3-4,
  561--659.

\bibitem[KS93b]{KorSch1}
\bysame, \emph{Sobolev spaces and harmonic maps for metric space targets},
  Comm. Anal. Geom. \textbf{1} (1993), no.~3-4.

\bibitem[KS97]{KorSch2}
\bysame, \emph{Global existence theorems for harmonic maps to non-locally
  compact spaces}, Comm. Anal. Geom. \textbf{5} (1997), no.~2, 333--387.

\bibitem[Mes02]{Mese}
Chikako Mese, \emph{Uniqueness theorems for harmonic maps into metric spaces},
  Commun. Contemp. Math. \textbf{4} (2002), no.~4, 725--750.

\bibitem[MP98]{MulPenk}
M.~Mulase and M.~Penkava, \emph{Ribbon graphs, quadratic differentials on
  {R}iemann surfaces, and algebraic curves defined over {$\overline{\bold
  Q}$}}, Asian J. Math. \textbf{2} (1998), no.~4, Mikio Sato: a great Japanese
  mathematician of the twentieth century.

\bibitem[Sch]{Schoen}
Richard~M. Schoen, \emph{Analytic aspects of the harmonic map problem}, Seminar
  on nonlinear partial differential equations ({B}erkeley, {C}alif., 1983),
  Math. Sci. Res. Inst. Publ., vol.~2, pp.~321--358.

\bibitem[Sko96]{Skora}
Richard~K. Skora, \emph{Splittings of surfaces}, J. Amer. Math. Soc. \textbf{9}
  (1996), no.~2, 605--616.

\bibitem[Str84]{Streb}
Kurt Strebel, \emph{Quadratic differentials}, Ergebnisse der Mathematik und
  ihrer Grenzgebiete (3) [Results in Mathematics and Related Areas (3)],
  vol.~5, Springer-Verlag, 1984.

\bibitem[Thu]{DehnThur}
Dylan Thurston, \emph{On geometric intersection of curves in surfaces}.

\bibitem[Wol89]{Wolf0}
Michael Wolf, \emph{The {T}eichm\"uller theory of harmonic maps}, J.
  Differential Geom. \textbf{29} (1989), no.~2, 449--479.

\bibitem[Wol91]{Wolf3}
\bysame, \emph{Infinite energy harmonic maps and degeneration of hyperbolic
  surfaces in moduli space}, J. Differential Geom. \textbf{33} (1991), no.~2,
  487--539.

\bibitem[Wol95]{WolfT}
\bysame, \emph{Harmonic maps from surfaces to {$\bold R$}-trees}, Math. Z.
  \textbf{218} (1995), no.~4, 577--593.

\bibitem[Wol96]{Wolf2}
\bysame, \emph{On realizing measured foliations via quadratic differentials of
  harmonic maps to {$\bold R$}-trees}, J. Anal. Math. \textbf{68} (1996),
  107--120.

\end{thebibliography}

\end{document}